\definecolor{bluegray}{rgb}{0.4, 0.6, 0.8}
\definecolor{turquoise}{rgb}{0.2, 0.7, 0.6}
\newcommand{\Facc}{\operatorname{Face}}
\newcommand{\Facet}{\operatorname{Face}}
\newcommand{\Face}{\operatorname{Face}}
\begin{document}
	
	%
	%
	%

\title{Homotopical characterization of strongly contextual simplicial distributions on cone spaces}

	
	\author{Aziz Kharoof\footnote{aziz.kharoof@bilkent.edu.tr} }
	\author{Cihan Okay\footnote{cihan.okay@bilkent.edu.tr}}
	\affil{Department of Mathematics, Bilkent University, Ankara, Turkey}
	
	\maketitle

\begin{abstract}
This paper offers a novel homotopical characterization of strongly contextual simplicial distributions with binary outcomes, specifically those  
defined on the cone of a $1$-dimensional space. 
In the sheaf-theoretic framework, such distributions correspond to non-signaling distributions on measurement scenarios where each context contains $2$ measurements with binary outcomes. 
To establish our results, we employ a  homotopical approach that includes collapsing measurement spaces and introduce   
categories associated with simplicial distributions that can detect strong contextuality. 
\end{abstract}

\tableofcontents

\section{Introduction}
 
Simplicial distributions introduced in \cite{okay2022simplicial} provide a topological approach to the study of contextuality for collections of probability distributions extending the sheaf-theoretic framework of \cite{abramsky2011sheaf}.
A simplicial distribution is defined for a space of measurements and a space of outcomes represented by simplicial sets $X$ and $Y$, respectively.
In this paper, we study simplicial distributions for $2$-dimensional measurement spaces.
More specifically, we restrict our attention to spaces that can be obtained as the cone of a $1$-dimensional space.
Interesting examples include the well-known Clauser--Horne--Shimony--Holt (CHSH) scenario and the more general cycle scenarios \cite{e25081127} among many others.

 \begin{figure}[h!]
\centering
\begin{subfigure}{.33\textwidth}
  \centering
  \includegraphics[width=.8\linewidth]{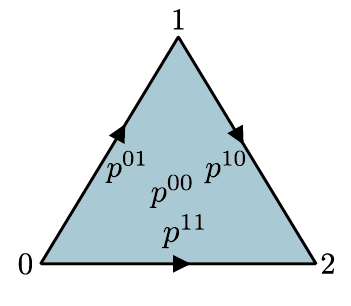}
  \caption{}
  \label{fig:triangle}
\end{subfigure}%
\begin{subfigure}{.33\textwidth}
  \centering
  \includegraphics[width=.8\linewidth]{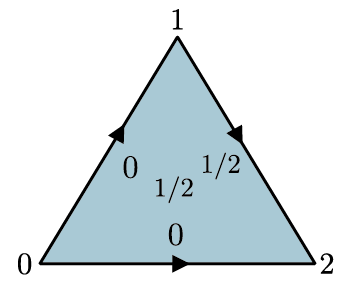}
  \caption{}
  \label{fig:PR1}
\end{subfigure}
\begin{subfigure}{.33\textwidth}
  \centering
  \includegraphics[width=.8\linewidth]{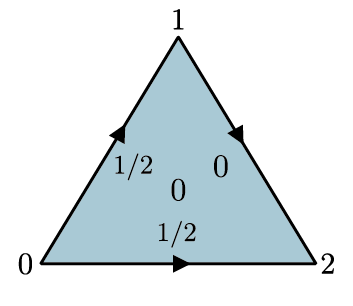}
  \caption{}
  \label{fig:PR2}
\end{subfigure}
\caption{ (a) A simplicial distribution on a triangle is specified by $p^{ab}\in [0,1]$ where  $a,b\in \ZZ_2$. {Popescu--Rohrlich boxes: $p_+$ (b) and $p_-$ (c).}
}
\label{fig:triangle-PR}
\end{figure}

Simplicial sets are combinatorial models of spaces that are more expressive than simplicial complexes.
A simplicial set $X$ consists of a sequence of sets $X_n$ representing the $n$-simplices together with face and the degeneracy maps relating these simplices of various dimensions.
A simplicial distribution on a pair $(X,Y)$ of simplicial sets is a simplicial set map 
$$
p:X\to D(Y)
$$
where $D$ is the distribution monad that replaces simplices with the set of probability distributions on those simplices. (We can work with the monad $D_R$ corresponding to $R$-valued distributions for an arbitrary semiring $R$.) 
Usually, $X$ comes with a finite set of {non-degenerate} simplices, and a simplicial distribution amounts to a collection of distributions $p_n(\sigma) \in D(Y_n)$  
for each {non-degenerate} simplex $\sigma$ of dimension $n$ related by {the simplicial structure maps.}
Given a simplicial set map $\varphi:X\to Y$ one can obtain a simplicial distribution by considering the composition $\delta^\varphi:X\xrightarrow{\varphi} Y\xrightarrow{\delta} D(Y)$  where $\delta$ sends a simplex to the distribution peaked at that simplex. Such distributions are called deterministic. 
A simplicial distribution is {non-}contextual if it lies in the image of the natural map:
$$
\Theta: D (\catsSet(X,Y)) \to \catsSet(X,D(Y)) 
$$
that sends a probabilistic mixture of simplicial set map $\varphi$ to a probabilistic mixture of $\delta^\varphi$. Otherwise, the distribution is called contextual. There is a notion of a support of a {given} simplicial distribution $p$ that consists of those simplicial set maps $\varphi:X\to Y$ such that the distribution $p_n(\sigma)$ evaluated at the simplex $\varphi_n(\sigma)$ is {non-zero}
for every {non-degenerate} simplex $\sigma$ of $X$. A simplicial distribution is called strongly contextual if the support is empty. Strongly contextual distributions naturally arise in quantum theory{; see  \cite{okay2022simplicial,okay2023equivariant}}. Our goal in this paper is to provide a homotopical characterization of strong contextuality.

For $2$-dimensional $X$ a typical simplicial set would consist of {non-degenerate} simplices of dimension $2$. 
For example, the $2$-simplex $\Delta[2]$ has a single {non-degenerate} simplex, and we can think of $X$ as obtained by gluing such simplices.
A simplicial distribution $p:\Delta[2]\to D(N\ZZ_2)$ is given by a distribution $p_n(\sigma) \in D(\ZZ_2^2)$; see Figure (\ref{fig:triangle}). 
We can think of $\Delta[2]$ as the join of the $0$-simplex $\Delta[0]$ and the $1$-simplex $\Delta[1]$, i.e., {it} is obtained by introducing a point and coning off. We will consider the  cone space of a $1$-dimensional simplicial set $X$, which is denoted by $\Delta[0]\ast X$.
A typical case is when the $1$-dimensional simplicial set is a circle $C$ consisting of $N$ edges. Then $\Delta[0]\ast C$ corresponds to the $N$-cycle scenario studied in \cite{e25081127}.
Our main result {(Corollary \ref{cor:homotopical characterization of strong contextuality})} is the following: 
 
\begin{thm*}
Let $X$ be a $1$-dimensional simplicial set.
A simplicial distribution 
$$
p:\Delta[0]\ast X\to D(N\ZZ_2)
$$
is strongly contextual if and only if there exists a circle $C\subset X$  
such that 
$$p|_{C}=\delta^{\varphi}$$ for some simplicial set map $\varphi:C\to N\ZZ_2$ that is not null-homotopic. 
\end{thm*}

An intermediate step in the proof of our result is the following characterization of strong contextuality (Theorem \ref{thm:SC1skelz2}): There exists a circle $C\subset {X}$ such that $p|_{\Delta[0]\ast C}$ is a Popescu--Rohrlich (PR) box; see Figure (\ref{fig:triangle-PR}).   
{To prove this result w}{e associate to {a} simplicial distribution a category whose morphisms correspond to conditional probability distributions. The composition in this category is induced by the $0$-order composition of the compository  {$D(\Delta_{\zz_d})$}
in {the sense of \cite{flori2013compositories}.}  
Similar kinds of categories are also studied in categorical probability theory  \cite{doberkat2003converse,panangaden1999category}.} 
{Other results we prove in this paper are as follows:}
\begin{itemize}
\item The cone construction has a closely related construction known as the d\' ecalage denoted by $\Dec^0(Y)$. We observe that there is a bijection of simplicial distributions
$$
\catsSet(\Delta[0]\ast X, D(Y)) \cong \catsSet(X,D(\Dec^0 Y))
$$ 
obtained by the well-known adjuction between the cone and the d\' ecalage functors (Section \ref{sec:decalage and the cone}). With this alternative perspective our main result gives a homotopical characterization of strong contextuality for simplicial distributions of the form $p:X\to D(\Dec^0N\ZZ_2)$.

\item There is a canonical map
$
d_0 : \Dec^0(Y) \to Y
$
which induces a map between simplicial distributions:
$$
\catsSet(X,D(\Dec^0Y)) \to \catsSet(X,DY)
$$
Given a deterministic distribution $\delta^\varphi$ associated to a simplicial set map $\varphi:X\to Y$, we study the preimage $\Facet(\varphi)$ of $\delta^\varphi$ under this map. Our main result  (Proposition \ref{pro:FaccCharac}) provides a description of $\Facet(\varphi)$ when {$X$ is a $1$-dimensional simplicial set and} $Y$ is the nerve space of a{n abelian} group $G$.

\item {We prove that $D(N\catC)$ is a compository for any small category $\catC$.  
In particular, $D(N\ZZ_d)$ is a compository {since $\Dec^0(N\zz_d)$ is isomorphic to the nerve of a category}. We use this observation to provide a conceptual underpinning for the composition in our associated category with a simplicial distribution.
}

\item Another ingredient in the proof of our main theorem {(Corollary \ref{cor:homotopical characterization of strong contextuality})} is the notion of a collapsing map $\pi:X\to \bar X$ that collapses a given $n$-simplex of $X$ to an $(n-1)$-simplex using a degeneracy map. Our collapsing result (Theorem \ref{thm:CollapThm}) essentially says that the properties of contextuality and being a vertex {(extremal simplicial distribution)} are preserved under the induced map:
$$
\pi^*: \catsSet(\bar X, D(Y)) \to \catsSet(X,D(Y))
$$

\item We give a characterization of strong contexuality for simplicial distributions 
$$p:X\to D_\BB({\Dec^0(N\ZZ_2)})$$ 
where {$\BB$ is the Boolean algebra $\set{0,1}$ and} $X$ is $1$-dimensional in terms of the associated logical category. More precisely, in Proposition \ref{Pro:CharofLSC}   we show that  strong contextuality of $p$ depends only on the endomorphisms of {the objects} in this category. 
\end{itemize}
The rest of the paper is organized as follows. In Section \ref{sec:simplicial distributions}, we introduce basic notions from convex sets, simplicial sets, and simplicial distributions. There are two important constructions: the d\' ecalage of a simplicial set and the cone of a simplicial set given in Section \ref{sec:decalage and the cone}. Then we move to simplicial distributions with the nerve space as the outcome space in Section \ref{sec:distributions on the nerve space} and relate them to distributions with outcome space the d\' ecalage of the nerve space.  
In Section \ref{sec:homotopical methods}, we introduce the collapsing technique and the  
{associated category to a simplicial distribution}. Section \ref{sec:logical categories} contains basic properties of {these} categories.
In Section \ref{sec:strong contextuality for binary outcomes} we {focus on the logical associated category to} prove our main theorem.
 
\paragraph{Acknowledgments.}
This work is supported by the Air Force Office of Scientific Research under
award number FA9550-21-1-0002.
{The second author also acknowledges support from the Digital Horizon Europe project FoQaCiA, GA no. 101070558.}

\section{Simplicial distributions}
\label{sec:simplicial distributions}

Simplicial distributions are first introduced in  \cite{okay2022simplicial}. {See also \cite{barbosa2023bundle} for a more general version.} In this section we recall the basic definitions following the more categorical approach given in \cite{kharoof2022simplicial}. 

\subsection{Convex sets}
Let $R$ be a commutative zero-sum-free semiring. This means that $a+b=0$ implies that $a=b=0$ for all $a,b\in R$.
The {\it distribution monad}  $D_R:\catSet \to \catSet$ is defined as follows:
	\begin{itemize}
		\item  {For a set $X$ the set $D_R(X)$ of $R$-distributions on $X$ consists of  functions $p:X\to R$ of finite support, i.e., $|\set{x\in X:\, p(x)\neq 0}|<\infty$, such that $\sum_{x\in X} p(x)=1$.}
		\item Given a function $f:X\to Y$ the function $D_R(f):D_R(X)\to D_R(Y)$ is defined by
		$$
		p \mapsto \left( y\mapsto \sum_{x\in f^{-1}(y)} p(x) \right). 
		$$ 
	\end{itemize}
	The structure maps of the monad are given as follows:
	\begin{itemize}
		\item  $\delta_X: X\to D_R(X)$ sends $x\in X$ to the delta distribution
		$$
		\delta^x(x') = \left\lbrace
		\begin{array}{ll}
			1 & x'=x,\\
			0 & \text{otherwise.}
		\end{array}
		\right.
		$$
		\item $\mu_X:D_R^2(X)\to D_R(X)$ sends a distribution $P$ to the distribution
		$$
		{\mu_X}(P)(x) = \sum_{p\in D_R(X)} P(p)p(x). 
		$$
	\end{itemize}
Algebras over the distribution monad are called {\it $R$-convex sets}. We will write 
$$
\nu^X:D_R(X) \to X
$$
for the structure map.
The category of $R$-convex sets will be denoted by $\catConv_R$. When $R=\RR_{\geq 0}$ we simply write $D$ for the distribution monad and $\catConv$ for the corresponding category.
There is an adjunction
\begin{equation}\label{eq:Set adjunction Conv}
D_R:\catSet \adjoint \catConv_R:U
\end{equation}
where $U$ is the forgetful functor and $D_R$ sends a set $X$ to the free convex set $D_R(X)$.

\begin{defn}\label{def:vertex}
{\rm
Let $(X,\nu^{X})$  be an $R$-convex set.
An element $x\in X$ is  
called a \emph{vertex}, or an \emph{extreme point}, if $x$ has a unique preimage under $\nu^X$.
}
\end{defn}
 
{A subset $U\subset X$ is called a {\it prime filter}  if} $U$ is an $R$-convex subset such that whenever a convex {combination} $\sum_{i=1}^n \alpha_i x_i$ of elements $x_i\in X$ lies in $U$ under the structure map $\nu^X$ we have $x_i\in U$ for all $1\leq i\leq n$ \cite[Definition 7]{jacobs2010convexity}.  

\begin{pro}\label{pro:ElemConv} 
Let $f:X\to Y$ be an $R$-convex map.
\begin{enumerate}
\item If $Z \subset Y$ is an $R$-convex subset, then $f^{-1}(Z)$ is an $R$-convex subset of $X$.
\item If $y\in Y$ is a vertex, then $f^{-1}(y)$ is a prime filter.
\item If $y\in Y$ is a vertex,
then every vertex of the $R$-convex subset $f^{-1}(y)$ is a vertex of $X$. 
\end{enumerate}
\end{pro}
\begin{proof}
Part (1): Given $Q \in D_R(f^{-1}(Z))$, we need to prove that $\nu^X(Q)\in f^{-1}(Z)$. For $y \notin Z$, we have
$$
D_R(f)(Q)(y)=\sum_{f(x)=y}Q(x)=0,
$$
that is, $D_R(f)(Q)\in D_R(Z)$. Therefore $\nu^Y(D_R(f)(Q))\in Z$ since $Z$ is a convex subset of $Y$. The map $f$ is in $\catConv_R$ therefore $f(\nu^X(Q))\in Z$, which implies that $\nu^X(Q)\in f^{-1}(Z)$.

Part (2):  
By part (1) we observe that $f^{-1}(y)$ is an $R$-convex subset of $X$. 
Given $Q
\in D_R(X)$ such that 
$\nu^{X}(Q) \in f^{-1}(y)$ we have $\nu^Y(D_R(f)(Q))=f(\nu^X(Q))=y$. Since $y$ is a vertex we have $D_R(f)(Q)=\delta^y$. Now, for $x \notin f^{-1}(y)$, 
$$
0=\delta^y(f(x))=
D_R(f)(Q)(f(x))=Q(x)+\sum_{x'\neq x: f(x')= f(x)} Q(x')
$$ 
which implies that $Q(x)=0$ since
$R$ is zero-sum-free.

Finally, part (3) follows directly from part (2).
\end{proof}

\subsection{Simplicial sets}

We recall some basic definitions from simplicial homotopy theory \cite{goerss2009simplicial}.	
The {\it simplex category} $\catDelta$ consists of
	\begin{itemize}
		\item the objects $[n]=\set{0,1,\cdots,n}$ for $n\geq 0$, and
		\item the morphisms $\theta:[m]\to [n]$ given by order preserving functions.
	\end{itemize}
A {\it simplicial set} is a functor $X:\catDelta^\op\to \catSet$. The set of $n$-simplices is usually denoted by $X_n=X([n])$. Alternatively, a simplicial set is a sequence of sets $X_0,X_1,\cdots$ with face maps ${d^X_i}:X_n \to X_{n-1}$ ($n \geq 1$, $0 \leq i \leq n$), and degeneracy maps ${s^X_j}:X_n \to X_{n+1}$ ($n \geq 0$, 0 $\leq j \leq n$) satisfying the simplicial identities (see \cite[Definition 3.2]{friedman2008elementary}). A simplex is called {\it degenerate} if it lies in the image of a degeneracy map, otherwise it is called {\it non-degenerate}. We write $X_n^\circ$ for the subset of non-degenerate $n$-simplices.
A non-degenerate simplex will be referred to as a {\it {generating} simplex} if it is not a face of another simplex.

An object $[m]$ in the simplex category gives a simplicial set $\Delta[m]$ whose set of $n$-simplices is given by  $\catDelta([n],[m])$. This simplicial set is called the {\it standard $m$-simplex}. We will write 
$$\sigma^{01\cdots m} \in \Delta[m]_m$$ 
for the simplex represented by the identity morphism $[m]\to [m]$. Note that all the other simplices can be obtained by applying a sequence of face and degeneracy maps. The horn space $\Lambda_i[n]$ is the simplicial subset of $\Delta[n]$ obtained by omitting the non-degenerate simplex $\sigma^{01\cdots n}$   and its $i$-th face. 

A {\it morphism of simplicial sets} is a natural transformation $f:X\to Y$ between the functors. In other words, it is a sequence of set maps $f_n:X_n \to Y_n$, where $n\geq 0$, compatible with the face and degeneracy maps. For a simplex $x \in X_n$, we will sometimes  write $f_x$ instead of $f_n(x)$.
A morphism $\theta$ in the simplex category induces a simplicial set map $\theta:\Delta[m]\to \Delta[n]$. 
We will write  $\catsSet$ for the category of simplicial sets, that is, the functor category $\catSet^{\Delta^\op}$.
This definition can be extended to an arbitrary category $\catC$ and the resulting functor category $\catC^{\Delta^\op}$ is denoted by $s\catC$.

\begin{defn}\label{def:Llinee}
{\rm 
A {\it line} $L$ 
is a simplicial set specified by 
a sequence of pairwise distinct $1$-simplices $\sigma_1,\cdots,\sigma_n\in L$ satisfying
$$
d_{i'_1}(\sigma_1)=d_{i_2}(\sigma_2)\, , \, 
d_{i'_2}(\sigma_2)=d_{i_3}(\sigma_3)\, ,\cdots,\, d_{i'_{n-1}}(\sigma_{n-1})=d_{i_n}(\sigma_n)
$$ 
where $i'_1,i_2,i'_2,i_3, \cdots,i'_{n-1},i_n
\in \{0,1\}$, and $i_j = 1-i'_j$ {for $2 \leq j \leq n-1$}. 
Let $i_1=1-i'_1$ and $i'_n=1-i_n$.
The $0$-simplices $x=d_{i_1}(\sigma_1)$ and $y=d_{i'_n}(\sigma_n)$ are called the {\it initial} and the {\it terminal} vertices of $L$. 
}
\end{defn}

Two $0$-simplices $x,y\in X_0$ are in the same path component if there exists a simplicial set map $h:L\to X$ with initial vertex $x$ and terminal vertex $y$. The set of path components is denoted by $\pi_0(X)$. When $\pi_0(X)$ is trivial   we say $X$ is {\it connected}.

\begin{defn}\label{def:simplicial homotopy}	{\rm	
Given two simplicial set maps $f,g:X\to Y$, we say that $f$ is {\it homotopic} to $g$ if there exists a homotopy,  a simplicial set map $H:X\times \Delta[1] \to Y$, such that $H|_{X\times \set{0}}=H\circ (\idy_X\times d^1)=f$ and $F|_{X\times \set{1}}=H \circ (\idy_X\times d^0)=g$. In this case we write $f\sim g$. For $Y$, a fibrant simplicial set (Kan complex), 
the set of homotopy classes of maps  
is denoted by $[X,Y]$.
We say $f$ is {\it null-homotopic} if it is homotopic to a constant map, i.e., $f\sim \ast$.	
}
\end{defn}

In this paper we will consider $n$-skeletal (or $n$-dimensional) simplicial sets where $n=1,2$. 
Let $\catDelta_{\leq n}$ denote the full subcategory of the simplex category consisting of objects $[m]$ where $0\leq m\leq n$. A simplicial set $X$ can be restricted to this subcategory. We will write $\tr_n(X):\catDelta_{\leq n}^\op\to \catSet$ for this restricted (truncated) simplicial set. This construction gives a functor $\tr_n:\catSet^{\Delta^\op} \to \catSet^{\Delta_{\leq n}^\op}$. There is an adjunction
\begin{equation}\label{eq:skeleton vs trancation adjunction}
\sk_n: \catSet^{\Delta^\op_{\leq n}} \adjoint \catSet^{\Delta^\op}:\tr_n
\end{equation}

\begin{defn}\label{def:skelatal}
{\rm
The {\it $n$-th skeleton} of a simplicial set $X$ is the simplicial set defined by
$$
\SK_n(X) = \sk_n\circ \tr_n(X).
$$
$X$ is said to be {\it $n$-skeletal} if the counit map $\SK_n(X)\to X$ of the adjunction in (\ref{eq:skeleton vs trancation adjunction}) is an isomorphism.}
\end{defn}

\subsection{Simplicial distributions}\label{susect:simp}
	  	
We can extend the distribution monad to a functor $D_R:\catsSet\to \catsSet$ by sending $X:\catDelta^\op\to \catSet$ to the composite $\catDelta^\op \xrightarrow{X}\catSet \xrightarrow{D_R} \catSet$. 
It turns out that this extended functor is also a monad; see \cite[Proposition 2.4]{kharoof2022simplicial}.  
Algebras {over this monad} are called {\it simplicial $R$-convex sets} and the resulting category is denoted by $s\catConv_R$.

	\begin{defn}\label{def:SimpDist}
		{\rm	
		A {\it simplicial scenario} is specified by a pair of simplicial sets $(X,Y)$ {where $X$ represents the space of measurements and $Y$ represents the space of outcomes}.
A {\it simplicial distribution} on a simplicial scenario $(X,Y)$ is a simplicial set map $p:X\to D_R(Y)$.	 
		}
	\end{defn}

\begin{pro}(\!\!\cite[Proposition 2.15]{kharoof2022simplicial})
\label{pro:Pro 2.15}
The functor $\St(-,-) : \St^{op} \times \St \to \catSet$ restricts to a functor
$$
\St(-,-): \St^{op} \times s\catConv_R \to \catConv_R
$$
\end{pro}

The structure map $\nu$ of the simplicial $R$-convex set $\catsSet(X,D_R(Y))$ is defined by  
	\begin{equation}\label{piForm}
	\nu(Q)_n(x)=\sum_{p \in \catsSet(X,D_R(Y)) }Q(p)\,p_n(x)
	\end{equation}
where $Q \in D_R(\catsSet(X,D_R(Y)))$ and $x\in X_n$.
When $R=\RR_{\geq 0}$ this gives the usual convex structure. Moreover, if $X$ has finitely many non-degenerate simplices then $\catsSet(X,D(Y))$ is a convex polytope with finitely many vertices; see for example \cite{okay2022mermin,e25081127}.

An important class of simplicial distributions are deterministic distributions obtained as the image of the following map
$$
(\delta_Y)_* : \catsSet(X,Y) \to \catsSet(X,D_R(Y))
$$	
For a simplicial set map $\varphi:X\to Y$ we write $\delta^\varphi=(\delta_Y)_*(\varphi)$ for the corresponding deterministic distribution.

\begin{pro}(\!\!\cite[Proposition 5.14]{kharoof2022simplicial})
\label{pro:Pro 5.14}
Assume that the semiring $R$ is also integral ($a\cdot b=0$ implies $a=0$ or $b=0$ for all $a,b\in R$). 
Every deterministic distribution in $\St(X,D_R(Y))$ is a vertex.
\end{pro}
	
The transpose of $(\delta_Y)_*$ under the adjunction in (\ref{eq:Set adjunction Conv}) gives a comparison map 
	\begin{equation}\label{eq:Theta}
\Theta_{X,Y}: D_R(\catsSet(X,Y)) \to \catsSet(X,D_R(Y))
	\end{equation}

\begin{defn}
	\label{def:contextual-morphism}	
		{\rm
A simplicial distribution $p:X\to D_RY$ is called {\it non-contextual} if $p$  lies in the image of $\Theta_{X,Y}$. Otherwise, it is called {\it contextual}.  
		}
\end{defn}

\begin{example}\label{ex:DeltaClass}
{\rm
Given a simplicial set $Y$, every simplicial distribution on the simplicial scenario $(\Delta[n],Y)$ is non-contextual {since $\Theta$ is an isomorphism.} 
} 
\end{example}	
\begin{pro}\label{lem:Exten}
Given a simplicial set map $f:X_1 \to X_2 $, if  
$p \in \catsSet(X_2,D_R(Y))$ is non-contextual then  $f^\ast (p)$ is also non-contextual. 
\end{pro}
\begin{proof}
Follows directly from the commutative diagram
\begin{equation}
\label{diag:ImpDiag}
\begin{tikzcd}[column sep=huge,row sep=large]
D_R(\catsSet(X_2,Y))  
\arrow[rr,"
\Theta_{X_2,Y}"]
 \arrow[d,"D_R(f^{\ast})"'] && \catsSet(X_2,D_R(Y))  
 \arrow[d,"f^\ast"] \\
D_R(\catsSet(X_1,Y)) 
 \arrow[rr,"\Theta_{ X_1,Y}"] && \catsSet(X_1,D_R(Y)) 
\end{tikzcd}
\end{equation}
\end{proof}
Similarly, we have the following:
\begin{pro}\label{lem:Exten2}
Given a simplicial set map $f:Y_1 \to Y_2 $, if $p \in \catsSet(X,D_R(Y_1))$ is   non-contextual then $f_\ast (p)$ is also non-contextual. 
\end{pro}

The {\it support of a simplicial distribution}
	$p:X\to D_R Y$ is defined by
	$$
	\supp(p) =\{ \varphi \in \St(X ,Y)  :\, p_n(x)(\varphi_n(x)) \neq 0,\; \forall x\in X_n,\, n \geq 0  \}.
	$$
Throughout the paper we will restrict to $X$ with finitely many simplices. Such simplicial sets are determined by a finite number of {generating} simplices.
Note that to have $\varphi\in \supp(p)$ it suffices that 
$p_{\sigma}(\varphi_{\sigma})\neq 0$ for the 
{generating} simplices $\sigma$ of $X$.

\begin{defn}\label{def:strong contextuality}
{\rm
We say $p:X\to D_R(Y)$ is {\it strongly contextual} if $\supp(p)$ is empty.
}
\end{defn}

\begin{pro}(\!\!\cite[Proposition 2.12]{kharoof2022simplicial})
\label{pro:Pro 2.12}
If a simplicial distribution 
$p:X\to D_R(Y)$ is strongly contextual 
then it is contextual.
\end{pro}
%

If the outcome {space} $Y$ is a simplicial group, then $\catsSet(X,D_R(Y))$ is a monoid, where the product is defined as follows: 
$$
(p\cdot q)_x(y)=\sum_{{y_1\cdot y_2}=y}p_x(y_1)q_x(y_2)
$$
%
for $p,q\in \catsSet(X,D_R(Y))$, $x\in X_n$, and $y\in Y_n$. In fact, the set $\catsSet(X,D_R(Y))$ is an $R$-convex monoid (see \cite[Corollary 5.2]{kharoof2022simplicial}). Moreover, the group $\catSet(X,Y)$ acts on $\catsSet(X,D_R(Y))$ by the following rule:
$$
\varphi \cdot p=\delta^{\varphi}\cdot p
$$
where $\varphi \in \catsSet(X,Y)$ and $p\in \catsSet(X,D_R(Y))$. 
\begin{prop}\label{pro:SCGroupaction}
Let $Y$ be a simplicial group. For
$p \in \catsSet(X,D_R(Y))$ and a map $\varphi \in \catsSet(X,Y)$,
the simplicial distribution $p$ is strongly contextual if and only if $\delta^{\varphi}\cdot p$ is strongly contextual. 
\end{prop}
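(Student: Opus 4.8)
The plan is to show that acting by a deterministic distribution $\delta^\varphi$ associated to a simplicial set map $\varphi \in \catsSet(X,Y)$ gives a bijection on supports, so that emptiness of the support is preserved. The cleanest route is to prove the stronger statement that left multiplication by $\delta^\varphi$ induces a bijection $\supp(p) \to \supp(\delta^\varphi \cdot p)$, which immediately yields that one support is empty if and only if the other is. Since $Y$ is a simplicial group, $\varphi$ is invertible in the group $\catsSet(X,Y)$ (with pointwise inverse $\varphi^{-1}$ defined by $(\varphi^{-1})_x = (\varphi_x)^{-1}$), so the action of $\varphi$ on $\catsSet(X,D_R(Y))$ is invertible with inverse the action of $\varphi^{-1}$. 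By symmetry it then suffices to show one inclusion $\supp(\delta^\varphi \cdot p) \supseteq \varphi \cdot \supp(p)$, i.e. that $\psi \in \supp(p)$ implies $\varphi \cdot \psi \in \supp(\delta^\varphi \cdot p)$, where $\varphi \cdot \psi$ denotes the pointwise product in the group $\catsSet(X,Y)$.

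The key computation is at the level of a single simplex. Fix a generating simplex $\sigma \in X_n$. Unwinding the definition of the convex monoid product together with the fact that $\delta^\varphi$ is the delta distribution peaked at $\varphi_\sigma$, I would compute
\begin{equation}
(\delta^\varphi \cdot p)_\sigma(y) = \sum_{y_1 \cdot y_2 = y} \delta^{\varphi_\sigma}(y_1)\, p_\sigma(y_2) = p_\sigma(\varphi_\sigma^{-1} \cdot y).
\end{equation}
Evaluating this at the outcome $y = (\varphi \cdot \psi)_\sigma = \varphi_\sigma \cdot \psi_\sigma$ gives
\begin{equation}
(\delta^\varphi \cdot p)_\sigma\big((\varphi \cdot \psi)_\sigma\big) = p_\sigma\big(\varphi_\sigma^{-1} \cdot \varphi_\sigma \cdot \psi_\sigma\big) = p_\sigma(\psi_\sigma).
\end{equation}
Thus the value of $\delta^\varphi \cdot p$ at $\varphi \cdot \psi$ on $\sigma$ equals the value of $p$ at $\psi$ on $\sigma$. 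Since $\psi \in \supp(p)$ means $p_\sigma(\psi_\sigma) \neq 0$ for every generating simplex $\sigma$, this shows $(\varphi \cdot \psi)_\sigma$ receives nonzero weight for every $\sigma$, i.e. $\varphi \cdot \psi \in \supp(\delta^\varphi \cdot p)$. Using the remark after Definition \ref{def:strong contextuality}, checking generating simplices suffices.

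To conclude, the map $\psi \mapsto \varphi \cdot \psi$ sends $\supp(p)$ into $\supp(\delta^\varphi \cdot p)$, and the analogous map for $\varphi^{-1}$ sends $\supp(\delta^\varphi \cdot p)$ into $\supp(p)$; these are mutually inverse, so they restrict to a bijection $\supp(p) \cong \supp(\delta^\varphi \cdot p)$. Hence $\supp(p) = \emptyset$ if and only if $\supp(\delta^\varphi \cdot p) = \emptyset$, which is exactly the equivalence of strong contextuality of $p$ and of $\delta^\varphi \cdot p$. I do not anticipate a serious obstacle here; the only point requiring care is making sure the group inverse $\varphi^{-1}$ is genuinely a simplicial set map (it is, because the inversion map of a simplicial group is simplicial) and that the product formula behaves correctly against the delta distribution, which is the short calculation above.
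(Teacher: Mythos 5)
Your proposal is correct and follows essentially the same route as the paper: the key computation $(\delta^{\varphi}\cdot p)_x\bigl((\varphi\cdot\psi)_x\bigr)=p_x(\psi_x)$ is exactly the one the authors use, from which they likewise conclude that $\psi\in\supp(p)$ if and only if $\varphi\cdot\psi\in\supp(\delta^{\varphi}\cdot p)$. Your additional remarks on the invertibility of $\varphi$ in the group $\catsSet(X,Y)$ only make explicit what the paper leaves implicit.
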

\proof{
Given $\psi \in \catsSet(X,Y)$ and $x \in X_n$, we have  
%
$$
(\delta^{\varphi}\cdot p)_x\left(({\varphi\cdot \psi})_x\right)=\sum_{{y_1y_2=
\varphi_x\psi_x}}\delta^{\varphi_x}(y_1)p_x(y_2)=p_x(\psi_x) .
$$
We conclude that $\psi\in \supp(p)$ if and only if ${\varphi\cdot\psi} \in \supp(\delta^\varphi \cdot p)$.

\subsection{D\' ecalage and the cone constructions}
\label{sec:decalage and the cone}

\begin{defn}{\rm (\!\!\cite{stevenson2011d})
\label{def:decalage} 
The {\it d\' ecalage} of a simplicial set $X$ is the simplicial set   $\Dec^0 (X)$ obtained by shifting the simplices of $X$ down by one degree, i.e., $(\Dec^0 X)_n = X_{n+1}$, and forgetting the first face and degeneracy maps.
}\end{defn}

The d\' ecalage construction gives a functor $\Dec^0:\catsSet\to \catsSet$. It has a left adjoint given by the cone construction.

\begin{defn}\label{def:cone}
{\rm
The cone of a simplicial set $X$ is the simplicial set $\Delta[0]\ast X$ defined by  
\begin{itemize}
\item $(\Delta[0]\ast X)_n = \set{c_n} \sqcup X_n \sqcup \left( \sqcup_{k+1+l=n} \set{c_k} \times X_l \right)$ where $c_n=(s_0)^n(\ast)$.  
\item For $(c_k,\sigma)\in \set{c_k}\times X_l$ 

$$
d_i(c_k,\sigma) = \left\lbrace
\begin{array}{ll}
(c_{k-1},\sigma) & i\leq k\\
(c_k,d_{i-1-k}\sigma) & i>k,
\end{array}
\right.
$$
{where $(c_{-1},\sigma)=\sigma$},
and
$$
s_j(c_k,\sigma) = \left\lbrace
\begin{array}{ll}
(c_{k+1},\sigma) & j\leq k\\
(c_k,s_{{j}-1-k}\sigma) & {j}>k.
\end{array}
\right.
$$
Otherwise,  the face and the degeneracy maps on the $\set{c_n}$ and $X_n$ factors act the same as in $\Delta[0]$ and $X$.
\end{itemize} 
}
\end{defn}
 
The cone construction is also functorial. Moreover, there is an adjunction 
\begin{equation}\label{eq:Cone vs declage adjunction}
\Delta[0]\ast (-): \catsSet \adjoint \catsSet: \Dec^0
\end{equation}
$\Dec^0(X)$ is an augmented simplicial set with the augmentation map 
\begin{equation}\label{eq:decalage d0}
d_0: \Dec^0(X) \to X_0
\end{equation} 
which is a deformation retract \cite[Lemma 2.6]{stevenson2011d}.

\begin{lemma}\label{lem:transcomm}
Let $\varphi:X \to Dec^0(Y)$ be a simplicial set map, and let $\varphi':\Delta[0] \ast X \to Y$ be the transpose of $\varphi$ under the adjunction in (\ref{eq:Cone vs declage adjunction}). The following diagram commutes:
$$
\begin{tikzcd}[column sep=huge,row sep=large]
X
\arrow[r,"\varphi"]
 \arrow[d,hook,""'] &
 Dec^0(Y)
 \arrow[d,"d_0"] \\
\Delta[0] \ast X 
 \arrow[r,"\varphi'"] & Y 
\end{tikzcd}
$$ 
\end{lemma}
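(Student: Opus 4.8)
The plan is to make the cone--d\'ecalage adjunction of (\ref{eq:Cone vs declage adjunction}) explicit through its unit, read off a pointwise formula relating $\varphi$ and $\varphi'$, and then reduce the commutativity of the square to a single boundary computation in $\Delta[0]\ast X$.

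First I would identify the unit $\eta_X\colon X\to\Dec^0(\Delta[0]\ast X)$ of the adjunction. Since $\Dec^0(\Delta[0]\ast X)_n=(\Delta[0]\ast X)_{n+1}$, I expect $\eta_X$ to be given on $n$-simplices by $x\mapsto(c_0,x)$, the element of $\{c_0\}\times X_n\subset(\Delta[0]\ast X)_{n+1}$. Checking that this is a morphism of simplicial sets is routine: the d\'ecalage operators on $\Dec^0(\Delta[0]\ast X)$ are the shifted maps $d_{i+1}$ and $s_{j+1}$ of $\Delta[0]\ast X$, and for $k=0$ the formulas of Definition \ref{def:cone} show that these shifted operators act on $(c_0,x)$ exactly as the operators of $X$ act on $x$. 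Granting this, the adjunction identity $\varphi=\Dec^0(\varphi')\circ\eta_X$ unwinds, using $\Dec^0(\varphi')_n=\varphi'_{n+1}$, to the pointwise relation
\[
\varphi_n(x)=\varphi'_{n+1}(c_0,x)\in Y_{n+1}=(\Dec^0 Y)_n,\qquad x\in X_n .
\]

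Next I would verify the square level by level. Fix $x\in X_n$; the right-then-top composite sends it to $d_0^{Y}\bigl(\varphi_n(x)\bigr)=d_0^{Y}\bigl(\varphi'_{n+1}(c_0,x)\bigr)$ by the relation above, where $(d_0)_n$ on $(\Dec^0 Y)_n=Y_{n+1}$ is the face map $d_0^Y$. Because $\varphi'$ is simplicial it commutes with $d_0$, so this equals $\varphi'_n\bigl(d_0(c_0,x)\bigr)$. The one computation that does the work is the boundary $d_0(c_0,x)$ in $\Delta[0]\ast X$: here $(c_0,x)\in\{c_0\}\times X_n$ has $k=0$, so the case $i\le k$ of the face formula in Definition \ref{def:cone} applies with the convention $(c_{-1},\sigma)=\sigma$, giving $d_0(c_0,x)=(c_{-1},x)=x$. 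Hence the composite equals $\varphi'_n(x)$, which is exactly the image of $x$ under the left-then-bottom composite, the left map being the inclusion of $X_n$ as the $X_n$-summand of $(\Delta[0]\ast X)_n$.

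The only real obstacle is pinning down the explicit unit, equivalently the transpose formula $\varphi_n(x)=\varphi'_{n+1}(c_0,x)$; once that is established, commutativity follows in one line from simpliciality of $\varphi'$ together with the identity $d_0(c_0,x)=x$. I would keep the verification that $x\mapsto(c_0,x)$ is simplicial and that it is the unit as routine bookkeeping with the structure maps of Definition \ref{def:cone}, rather than spelling out every simplicial identity.
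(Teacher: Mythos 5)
Your proof is correct. The paper in fact states Lemma \ref{lem:transcomm} without any proof, and your argument supplies exactly the computation the authors implicitly rely on: the unit of the cone--d\'ecalage adjunction is $x\mapsto (c_0,x)$, so $\varphi_n(x)=\varphi'_{n+1}(c_0,x)$, and commutativity reduces to $d_0(c_0,x)=(c_{-1},x)=x$ from Definition \ref{def:cone} together with simpliciality of $\varphi'$. (Only a terminological slip: you call the composite ``right-then-top'' while computing top-then-right; the computation itself is the right one.)
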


The notions of contextuality for the simplicial scenarios $(\Delta[0]\ast X,Y)$ and $(X,\Dec^0(Y))$ coincide:

\begin{pro}(\!\!\cite[Proposition 2.23]{kharoof2022simplicial})
\label{pro:Pro 2.23} 
The adjunction given in  (\ref{eq:Cone vs declage adjunction}) induces  
a commutative diagram of $R$-convex sets
$$
\begin{tikzcd}[column sep=huge,row sep=large]
D_R(\catsSet(\Delta[0] \ast X,Y )) 
\arrow[rr,"\Theta_{\Delta[0] \ast X,Y}"]
 \arrow[d,"\cong"] && \catsSet(\Delta[0] \ast X,D_R(Y)) \arrow[d,"\cong"] \\
D_R(\catsSet(X,\Dec^0Y)) \arrow[rr,"\Theta_{X,\Dec^0(Y)}"] && \catsSet(X,D_R(\Dec^0Y))
\end{tikzcd}
$$ 
\end{pro}

\begin{defn}\label{FacdefNH}
{\rm
Given $\varphi \in \catsSet(X,Y)$ we define the {\it face} $\Facc(\varphi)$ at $\varphi$ to be the preimage of 
$\delta^\varphi$
under the map 
$$
D_R(d_0)_\ast:\catsSet(X,D_R(\Dec^0Y)) \to \catsSet(X,D_R(Y))
$$ 
}
\end{defn}

In other words, a simplicial distribution  $p : X \to D_R(\Dec^0(Y))$ belongs to $\Facc(\varphi)$ if and only if it makes the following diagram commute
$$
\begin{tikzcd}[column sep=huge,row sep=large]
 & D_R(\Dec^0(Y)) 
\arrow[d,"D_R(d_0)"]  \\
X \arrow[ru,"p"]  \arrow[r,"\delta^{\varphi}"']  & D_R(Y)
\end{tikzcd}
$$
We will denote the set of the vertices of $\Facc(\varphi)$ by $V(\Facc(\varphi))$ (Definition \ref{def:vertex}).

\begin{pro}\label{pro:FaccPrimeFill}
Given a simplicial set map $\varphi:X\to Y$, let $\Face(\varphi)$ denote the corresponding face (Definition \ref{FacdefNH}).
\begin{enumerate}
\item Every element of $V(\Facc(\varphi))$ is a vertex of $\catsSet(X,D_R(\Dec^0Y))$.
\item If $Y$ is a simplicial group, then for $p \in \Facc(\varphi)$ and $q \in \Facc(\psi)$, we have $p \cdot q \in 
\Facc(\varphi \cdot \psi)$.
\item The face $\Facc(\varphi)$ is a prime filter.
\end{enumerate}
\end{pro}         
\begin{proof}
According to Proposition \ref{pro:Pro 2.15} 
the map $D_R(d_0)_\ast$ is in $\catConv_R$. Part (1) follows from part (3) of Proposition \ref{pro:ElemConv}, and part (2) follows from the fact that $D_R(d_0)_\ast$ is a homomorphism of monoids.
Finally, by part (2) of Proposition \ref{pro:ElemConv} we {obtain} part (3).
\end{proof}
\begin{pro}\label{NotnullSC}
If $\varphi:X\to Y$ is not null-homotopic, then every simplicial distribution in 
$\Facc(\varphi)$ is strongly contextual. 
\end{pro}
\begin{proof}
Let $p \in \Facc(\varphi)$. 
If $\psi \in \supp(p)$, then for $x \in X_n$ we have
$$
\begin{aligned}
(\delta^{\varphi})_n(x)\left(\left((d_0)_\ast(\psi)\right)_n(x)\right)&=D_R(d_0)_\ast(p)_n(x)\left(\left((d_0)_\ast(\psi)\right)_n(x)\right)\\
&=D_R((d_0)_n)(p_n(x))\left((d_0)_n(\psi_n(x))\right)\\
&=\sum_{y: \, (d_0)_n(y)=
(d_0)_n(\psi_n(x))}p_n(x)(y)\\
&=p_n(x)(\psi_n(x))+ \cdots 
\end{aligned}
$$
The last sum is not zero, because $p_n(x)(\psi_n(x))>0$ and $R$ is zero-sum-free. Therefore $(d_0)_\ast(\psi) \in \supp(\delta^\varphi)=\{\varphi\}$. We conclude that $(d_0)_\ast(\psi)=\varphi$, which means that $\varphi$ is null-homotopic. 
\end{proof}

\section{Distributions on the nerve space}
\label{sec:distributions on the nerve space} 
 

Throughout this section $G$ denotes a finite group and $X$ denotes a simplicial set with $X_0$ and $X_1$ are both finite.

\subsection{Nerve space}

{Let $NG$ denote the nerve of the group $G$; see for example \cite{goerss2009simplicial}.}
We prove some results on the homotopy classes of maps $X\to NG$.

\begin{lemma}\label{lem:Kerofd0} 
{Let $X$ be a connected simplicial set.} The group of null-homotopic maps in $\catsSet(X,NG)$ is isomorphic to $G^{|X_0|-1}$.
\end{lemma}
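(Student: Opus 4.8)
The plan is to understand the group structure on $\catsSet(X,NG)$ and identify exactly which maps are null-homotopic. Since $Y=NG$ is a simplicial group (the nerve of a group carries a natural abelian-type group structure when $G$ is, or more generally a monoid structure making $\catsSet(X,NG)$ a group), the set of simplicial set maps $X\to NG$ forms a group under pointwise multiplication, and the null-homotopic maps form a subgroup. So the statement is really a computation of this subgroup. First I would recall that a simplicial set map $\varphi:X\to NG$ is determined by its values on $1$-simplices: to each $1$-simplex $\sigma$ one assigns a group element $\varphi(\sigma)\in G$, subject to the cocycle condition coming from the $2$-simplices of $X$. This gives a combinatorial description of $\catsSet(X,NG)$.

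Next I would pin down null-homotopy concretely. A map $\varphi:X\to NG$ is null-homotopic precisely when it is \emph{cohomologous to the trivial map}, i.e.\ when there is a $0$-cochain (a function $h:X_0\to G$) such that on each $1$-simplex $\tau$ with faces $d_0\tau=v$, $d_1\tau=u$ we have $\varphi(\tau)=h(u)^{-1}h(v)$ (a coboundary). Equivalently, the null-homotopic maps are exactly those of the form $\delta h$ for some function $h:X_0\to G$. The key step is then to analyze the coboundary map
$$
\partial: \catSet(X_0,G) \to \catsSet(X,NG), \qquad h \mapsto \delta h,
$$
whose image is precisely the subgroup of null-homotopic maps. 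To compute the size of this image I would determine its kernel: $\partial h$ is the trivial map iff $h(u)^{-1}h(v)=e$ for every $1$-simplex with vertices $u,v$, i.e.\ iff $h$ is constant on each path component of $X$. Here is where connectedness of $X$ enters decisively: if $X$ is connected, $h$ must be globally constant, so the kernel is the diagonal copy of $G$ inside $\catSet(X_0,G)\cong G^{|X_0|}$.

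With the kernel identified as $G$ (the constant functions), the first isomorphism theorem gives that the group of null-homotopic maps is isomorphic to $\catSet(X_0,G)/G \cong G^{|X_0|}/G \cong G^{|X_0|-1}$, which is exactly the claimed answer. I would organize the proof as: (i) establish the bijection between homotopies $H:X\times\Delta[1]\to NG$ and functions $X_0\to G$, so that $f\sim g$ iff $g=(\delta h)\cdot f$ for some such $h$ — this shows the null-homotopic maps are exactly the image of $\partial$; (ii) compute $\ker\partial$ using connectedness; (iii) apply the isomorphism theorem. The main obstacle I anticipate is part (i): carefully unwinding the definition of a simplicial homotopy $H:X\times\Delta[1]\to NG$ into the coboundary formula, making sure the $1$-simplices of $X\times\Delta[1]$ and the simplicial identities force $\varphi(\tau)=h(u)^{-1}h(v)$ and verifying that every such $h$ genuinely extends to a homotopy (so that the image of $\partial$ is precisely, not merely contained in, the null-homotopic subgroup). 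The group-theoretic bookkeeping in (ii) and (iii) is then routine, though one should note whether $G$ must be abelian for $\catsSet(X,NG)$ to be a group; the surrounding text refers to $\Dec^0 N\ZZ_2$ and abelian $G$, so I would assume $G$ abelian where the group structure on maps is used.
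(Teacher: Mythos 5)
Your proof is correct and follows essentially the same route as the paper: your coboundary map $\partial:\catSet(X_0,G)\to\catsSet(X,NG)$, $h\mapsto(\tau\mapsto h(u)^{-1}h(v))$, is literally the paper's map $(d_0)_\ast:\catsSet(X,\Dec^0(NG))\cong G^{|X_0|}\to\catsSet(X,NG)$, and both arguments then identify the kernel with the diagonal copy of $G$ via connectedness and conclude $G^{|X_0|-1}$. The only difference is presentational: the paper simply asserts that the null-homotopic maps are exactly the image of $(d_0)_\ast$ (implicitly using contractibility of $\Dec^0(NG)$ and the lifting properties of $d_0$), whereas you propose to establish the same identification by unwinding simplicial homotopies $X\times\Delta[1]\to NG$ directly; your caveat that $G$ should be abelian for the group-theoretic bookkeeping is a point the paper glosses over as well.
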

\begin{proof}
The group of the null-homotopic maps in $\catsSet(X,NG)$ is exactly the image of 
$(d_0)_\ast: \catsSet(X,\Dec^0(NG)) \to \catsSet(X,NG)$. We have $\catsSet(X,\Dec^0(NG)) \cong 
\catSet(X_0,G)=G^{|X_0|}$. Now, we characterize the maps in the kernel of $(d_0)_\ast$.
Suppose that $\varphi \in \Ker{(d_0)_\ast}$, then for $x \in X_1$ there is $g \in G$ such that 
$\varphi_1(x)=(g,1_G)$. Therefore, we have
$$
\varphi_0(d^X_0(x))=
d^{\Dec^0(NG)}_0(\varphi_1(x))=g\cdot 1_G =g
$$ 
and  
$$
\varphi_0(d^X_1(x))=
d^{\Dec^0(NG)}_1(\varphi_1(x)) =g.
$$ 
Since $X$ is connected, we obtain that $\varphi_0(z)=g$ for every $z\in X_0$. On the other hand, every $g \in G$ gives us a simplicial set map 
$\varphi: X \to \Dec^0(NG)$ by setting $\varphi_0(z)=g$ for every $z\in X_0$. In this case $\varphi_n(x)=(g,1_G,\cdots,1_G)$ for every $x \in X_n$, thus $\varphi \in \Ker (d_0)_{\ast}$. We conclude that $\Image (d_0)_\ast \cong G^{|X_0|}/\{(g,\cdots,g):\, g \in G\} \cong G^{|X_0|-1}$.  
\end{proof}

\begin{pro}\label{pro:number of not null-homotopic maps NG}
Let $X$ be a $1$-skeletal {connected} simplicial set, 
$n=|X_0|$, and  $m$ denote the number of non-degenerate simplices in $X_1$. 
Then the number of maps in $\catsSet(X,NG)$ that are not null-homotopic is given by
$|G|^m-|G|^{n-1}$. 
%
\end{pro}
\begin{proof}
Since $X$ is $1$-skeletal, we have $\catsSet(X,NG)\cong G^m$. Therefore by  Lemma \ref{lem:Kerofd0} we obtain that the number of 
maps in $\catsSet(X,NG)$ that are not null-homotopic is 
$|G|^m-|G|^{n-1}$. 

\end{proof}
%


\begin{defn}\label{def:circle}
{\rm
Let $C$ be a line (Definition \ref{def:Llinee}).
If in addition $d_{i_1}(\sigma_1)=d_{i'_n}(\sigma_n)$ (the initial and the terminal vertices coincide) then the line is called a {\it circle}.
}
\end{defn}

%
%


Given a line $L$ and a simplicial set map 
$\varphi : L \to NG$ 
we define 
\begin{equation}\label{eq:C varphi}
(L,\varphi) = \prod_{i=1}^{n}g_i
\end{equation}
 where 
\begin{equation}\label{Eq:gkvarph}
g_k =\begin{cases}
\varphi_{\sigma_k}  & \text{if} \; \; \; i_k=1\\
\varphi_{\sigma_k}^{-1}  & \text{if} \; \;\; i_k=0
\end{cases}
\end{equation}
for $1 \leq k \leq n$. 

%
\begin{pro}\label{pro:NullHomCirc}
For a circle $C$, a simplicial set map $\varphi:C \to NG$ is null-homotopic if and only if $(C,\varphi)=1_G$.
\end{pro}
\begin{proof} 
Suppose there is  a simplicial set map $\psi:\Delta[0] \ast C \to NG$ such that $\psi|_C=\varphi$. For $1 \leq k \leq n$, the edges of the triangle $(c_0,\sigma_k)$ give us the following equations 
$$
\begin{aligned}
\psi_{(c_0,d_{i_k}(\sigma_k))} \cdot \psi_{\sigma_k}=\psi_{(c_0,d_{i'_k}(\sigma_k))} & \text{ if} \,\,\, i_k=1,\\
\psi_{(c_0,d_{i'_k}(\sigma_k))} \cdot \psi_{\sigma_k}=\psi_{(c_0,d_{i_k}(\sigma_k))} & \text{ if} \,\,\, i_k=0.
\end{aligned}
$$
Note that $\psi_{\sigma_k}=\varphi_{\sigma_k}$, so we obtain that $\psi_{(c_0,d_{i_k}(\sigma_k))} \cdot g_k=\psi_{(c_0,d_{i'_k}(\sigma_k))}$ (see Equation (\ref{Eq:gkvarph})). Using these equations and the way that the edges $\sigma_1,\cdots,\sigma_n$ are glued together, we obtain that 
$$
\begin{aligned}
\psi_{(c_0,d_{i_1}(\sigma_1))}&=\psi_{(c_0,d_{i'_n}(\sigma_n))}\\
&=\psi_{(c_0,d_{i_n}(\sigma_n))} \cdot g_{n}\\
&=\psi_{(c_0,d_{i'_{n-1}}(\sigma_{n-1}))} \cdot g_n\\
&=\psi_{(c_0,d_{i_{n-1}}(\sigma_{n-1}))} \cdot g_{n-1}\cdot g_n
\\
&= \cdots =
\psi_{(c_0,d_{i_{1}}(\sigma_{1}))} \cdot \prod_{i=1}^{n}g_i.
\end{aligned}
$$
This implies that $\prod_{i=1}^{n}g_i=1_G$. Now, suppose conversely that $\prod_{i=1}^{n}g_i=1$. We define a null-homotopy $\psi: \Delta[0] \ast C \to NG$ 
%
by
$$
\psi_{(c_0,d_{i_k}(\sigma_k))}=\prod_{i=1}^{k-1}g_i
$$
where $1 \leq k \leq n$ and the empty product is by convention defined to be $1_G$. 
The map $\psi$ is well-defined because a map in $\catsSet(\Delta[2],NG)$ is uniquely defined by the restriction on any horn $\Lambda_i[2]$. It remains to show that $\psi|_{C}=\varphi$. For 
$1 \leq k \leq n-1$ if $i_k=1$ then 
$(\prod_{i=1}^{k-1}g_i) \cdot \psi_{\sigma_k}=\prod_{i=1}^{k}g_i$, so $\psi_{\sigma_k}=g_k=\varphi_{\sigma_k}$. If $i_k=0$ then $(\prod_{i=1}^{k}g_i) \cdot \psi_{\sigma_k}=\prod_{i=1}^{k-1}g_i$, so $\psi_{\sigma_k}={g_k}^{-1}=\varphi_{\sigma_k}$. For $k=n$, if $i_n=1$ then $(\prod_{i=1}^{n-1}g_i) \cdot \psi_{\sigma_n}=1_G$. Because $\prod_{i=1}^{n}g_i=1_G$  we obtain that $\psi_{\sigma_n}=g_n=\varphi_{\sigma_n}$.
Finally, if $i_n=0$ then $1_G \cdot \psi_{\sigma_n}=\prod_{i=1}^{n-1}g_i$. Again because $\prod_{i=1}^{n}g_i=1_G$  we obtain that $\psi_{\sigma_n}={g_n}^{-1}=\varphi_{\sigma_n}$. 
\end{proof}
\begin{pro}\label{pro:Whomoto}
Let $X$ be a
$1$-skeletal simplicial set.
A simplicial set map  $\varphi \in \catsSet(X,NG)$
is null-homotopic if and only if for every circle $C\subset X$ the restriction
$\varphi|_C$ is null-homotopic.
\end{pro}
\begin{proof}
There exists  circles  $C_1,C_2,\cdots,C_m$ in $X$ such that 
$X$ is homotopic to $C_1 \vee C_2 \vee \cdots \vee C_m$. 
Therefore we have a map 
$$
\catsSet(X,NG)\to \prod_{i=1}^{m}\catsSet(C_i,NG)\cong 
\catsSet(\vee_{i=1}^m C_i,NG)
$$
that sends 
$\varphi$ to $\{\varphi|_{C_i}\}
$. This map
induces an isomorphism $[X,NG]\cong \prod_{i=1}^{m}[C_i,NG]$.
Therefore $\varphi$ is null-homotopic if and only if $\varphi|_{C_i}$ is null-homotopic for every 
$1 \leq i \leq m$. This happens if and only if $\varphi|_{C}$ for every circle $C\subset X$.
\end{proof}
\begin{corollary}\label{cor:NullHomAct}
Let $X$ be a $1$-skeletal simplicial set.
A simplicial set map $\varphi:X \to NG$ is null-homotopic if and only if $(C,\varphi|_{C})=1_G$ for every circle $C\subset X$. 
\end{corollary}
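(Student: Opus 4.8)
The plan is to obtain the corollary as an immediate consequence of the two preceding results, Proposition \ref{pro:Whomoto} and Proposition \ref{pro:NullHomCirc}, by chaining their equivalences. The first step, which carries the genuine topological content, is to apply Proposition \ref{pro:Whomoto}: since $X$ is $1$-skeletal, the null-homotopy of the global map $\varphi:X\to NG$ is equivalent to the null-homotopy of each restriction $\varphi|_C$ as $C$ ranges over the circles $C\subset X$. This reduction rests on the fact that a $1$-skeletal simplicial set is homotopy equivalent to a wedge of circles, so that $[X,NG]$ splits as the corresponding product.

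The second step is purely algebraic: I would invoke Proposition \ref{pro:NullHomCirc}, which for a single circle $C$ identifies the null-homotopy of $\varphi|_C$ with the relation $(C,\varphi|_C)=1_G$, where $(C,\varphi|_C)$ is the ordered product of edge labels, inverted according to orientation, defined in Equation (\ref{eq:C varphi}). Substituting this equivalence into the statement produced by the first step yields exactly the assertion of the corollary: $\varphi$ is null-homotopic if and only if $(C,\varphi|_C)=1_G$ for every circle $C\subset X$.

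Because both ingredients are already established, there is no substantial obstacle here. The only point deserving a moment of care is the well-definedness of the invariant $(C,\varphi|_C)$, which a priori depends on the chosen enumeration $\sigma_1,\dots,\sigma_n$ of the edges of $C$ and on the orientation data $i_k\in\{0,1\}$; but since Proposition \ref{pro:NullHomCirc} is phrased for an arbitrary circle equipped with precisely this data, the condition \emph{$(C,\varphi|_C)=1_G$} is meaningful for each circle entering the wedge decomposition, and that is all the argument requires. Hence the corollary follows with no further computation.
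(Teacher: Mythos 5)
Your proof is correct and follows exactly the paper's own argument: the paper likewise deduces the corollary by combining Proposition \ref{pro:Whomoto} (reduction to circles via the wedge decomposition) with Proposition \ref{pro:NullHomCirc} (the algebraic criterion on a single circle). Your extra remark on the well-definedness of $(C,\varphi|_C)$ is a reasonable precaution but introduces nothing beyond what the cited propositions already supply.
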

\begin{proof}
Follows from Propositions \ref{pro:NullHomCirc} and \ref{pro:Whomoto}.
\end{proof}

\subsection{D\'ecalage of the nerve space}

Let $M$ be a finite monoid.
For $n\geq 0$ we define
a map 
\begin{equation}\label{eq:SecDef}
T_n : D(M^n) \to D(M^{n+1})
\end{equation}
by 
$$T_n(p)(m_1,\cdots,m_{n+1})=\frac{p(m_2,\cdots,m_{n+1})}{|M|}$$ 
where $p \in D(M^n)$ and $(m_1,\cdots,m_{n+1})\in M^{n+1}$. 
%
%
\begin{lem}\label{TheSeccc}
The maps {$\{T_n\}_{n\geq 0}$} in Equation
(\ref{eq:SecDef}) assemble into a simplicial set map 
$$T:D(NM) \to D(\Dec^0(NM))$$ 
that gives a section of $D(d_0) : D(\Dec^0(NM)) \to D(NM)$.
\end{lem}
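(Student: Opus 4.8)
The plan is to recognize $T$ as the operation ``prepend a uniform coordinate'' and then verify the three requirements separately: that each $T_n$ lands in the set of distributions, that $\{T_n\}_{n\ge 0}$ is natural with respect to every face and degeneracy operator, and that $D(d_0)\circ T=\mathrm{id}$. Write $u\in D(M)$ for the uniform distribution $u(m)=1/|M|$ and use the identification $(\Dec^0 NM)_n=(NM)_{n+1}=M^{n+1}\cong M\times M^n$. Then the defining formula becomes the product distribution
$$
T_n(p)=u\otimes p,\qquad T_n(p)(m_1,\dots,m_{n+1})=u(m_1)\,p(m_2,\dots,m_{n+1}).
$$
With this description normalization is immediate: summing first over $m_1$ and then over $(m_2,\dots,m_{n+1})$ gives $\bigl(\sum_{m_1}u(m_1)\bigr)\bigl(\sum p\bigr)=1$, so indeed $T_n(p)\in D(M^{n+1})$.

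First I would dispose of the section property and the formal part of naturality. The augmentation $d_0\colon\Dec^0(NM)\to NM$ is, at level $n$, the nerve face $d_0^{NM}\colon M^{n+1}\to M^n$ that deletes the first coordinate, so $D(d_0)\bigl(T_n(p)\bigr)(m_2,\dots,m_{n+1})=\sum_{m_1}u(m_1)\,p(m_2,\dots,m_{n+1})=p(m_2,\dots,m_{n+1})$; hence $D(d_0)\circ T=\mathrm{id}$ and $T$ is a section. For naturality, recall that the décalage forgets the $0$-th operators, so the $i$-th face and $j$-th degeneracy of $\Dec^0(NM)$ are $d_{i+1}^{NM}$ and $s_{j+1}^{NM}$, respectively. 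For $i\ge 1$ the face $d_{i+1}^{NM}$, and for every $j$ the degeneracy $s_{j+1}^{NM}$, act only inside the $M^n$-factor and leave the freshly added coordinate $m_1$ untouched; consequently they commute with $u\otimes(-)$ by functoriality of $D$, and each compatibility reduces to a simplicial identity already holding in $NM$. These cases I would check by direct but routine bookkeeping.

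The main obstacle is the single remaining face, the $0$-th face of $\Dec^0(NM)$, which is $d_1^{NM}$: it multiplies the new uniform coordinate $m_1$ into $m_2$ and is therefore the only identity that genuinely mixes the convex and the algebraic structure. Here one must compare
$$
D(d_1^{NM})\bigl(u\otimes p\bigr)\qquad\text{with}\qquad T_{n-1}\bigl(D(d_0^{NM})(p)\bigr),
$$
that is, the pushforward of $u\otimes p$ along the multiplication $(m_1,m_2)\mapsto m_1m_2$ against the marginalization of $p$ over its first coordinate followed by re-insertion of $u$ via $T_{n-1}$. Evaluating the fibre sums, the two sides agree precisely because the uniform distribution is translation-invariant: pushing $u$ forward along multiplication by a fixed element of $M$ again yields $u$, so the merged coordinate re-uniformizes. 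This translation-invariance is the crux of the argument; it uses that the (left and right) translations of $M$ are bijections, which is automatic when $M$ is a group, e.g. $M=\ZZ_2$, the case of interest. I would carry out this one verification carefully and treat the rest as formal consequences of the product description $T_n(p)=u\otimes p$.
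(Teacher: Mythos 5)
Your proof is correct and follows essentially the same route as the paper's: both arguments verify the face and degeneracy compatibilities for $T$ one case at a time and then check $D(d_0)\circ T=\Id$ directly; your packaging $T_n(p)=u\otimes p$ merely organizes the same bookkeeping. The one substantive difference is that you isolate exactly where the argument is non-formal, namely the $0$-th face of $\Dec^0(NM)$, which is $d_1^{NM}$, and you observe that the identity $D(d_1^{NM})(u\otimes p)=T_{n-1}\bigl(D(d_0^{NM})(p)\bigr)$ rests on the translations of $M$ being bijections. This is worth flagging: the lemma is stated for an arbitrary finite monoid, and for such $M$ this step can genuinely fail --- for $M=\{1,e\}$ with $e^2=e$ and $p=\delta^e$ one computes $D(d_1)(T_1(p))=\delta^e$ while $T_0(D(d_0)(p))$ is uniform --- whereas the paper's displayed computation passes over the $i=0$ case without comment. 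So your argument proves the statement in the generality in which the paper actually uses it ($M$ a group, ultimately $\ZZ_2$), and correctly identifies the cancellation hypothesis needed for the stated level of generality; aside from that, there is no gap.
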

\begin{proof}
Given $0 \leq i < n$, $x_1,\cdots,x_{n+2} \in M$, and $p \in D(M^n)$, 
%
we have {
$$
\begin{aligned}
D(d_{i+1})(T_n(p))(x_1,\cdots,x_n) &= 
\sum_{d_{i+1}(y_1,\cdots y_{n+1})=(x_1,\cdots,x_n)}
T_n(p)(y_1,\cdots,y_{n+1})
\\
&=\sum_{y_{i+1} \cdot y_{i+2}=x_{i+1}}T_n(p)(x_1,\cdots,
x_i,y_{i+1},y_{i+2},x_{i+2},\cdots,x_n) \\
&=\sum_{y_{i+1} \cdot y_{i+2}=x_{i+1}}\frac{   p(x_2,\cdots,x_i,
y_{i+1}, y_{i+2},x_{i+2},\cdots,x_{n})}{|M|}\\
&=     \frac{\sum_{d_i(y_1,\cdots,y_n)=(x_2,\cdots,x_n)} p(y_1,\cdots,y_n)}{|M|}\\
&= 
\frac{  D(d_i)(p)(x_2,\cdots,x_{n})}{|M|}\\
&=T_{n-1}(D(d_{i})(p))(x_1,\cdots,x_n).
\end{aligned}
$$}
%
%
For $i=n$, we have 
$$
\begin{aligned}
D(d_{n+1})(T_n(p))(x_1,\cdots,x_n) &= 
\sum_{y_{n+1}\in M}
T_n(p)(x_1,\cdots,x_n,y_{n+1})
\\
&=\sum_{y_{n+1}\in M}
\frac{p(x_2,\cdots,x_n,y_{n+1})}{|M|} \\
&=\frac{\sum_{y_{n+1}\in M}
p(x_2,\cdots,x_n,y_{n+1})}{|M|} \\
& =\frac{D(d_{n})(p)(x_2,\cdots,x_n)}{|M|} \\
&=T_{n-1}(D(d_{n})(p))(x_1,\cdots,x_n) .
\end{aligned}
$$
Now, if $x_{i+2}=1_M$, then 
$$
\begin{aligned}
D(s_{i+1})(T_n(p))(x_1,\cdots,x_{n+2})&= 
T_n(p)(x_1,\cdots,x_{i+1},x_{i+3},\cdots,x_{n+2})\\
&=\frac{p(x_2,\cdots,x_{i+1},x_{i+3},\cdots,x_{n+2})}{|M|}
\\
&=
\frac{D(s_i)(p)(x_2,\cdots,x_{n+2})}{|M|}
\\
&=T_{n+1}(D(s_i)(p))(x_1,\cdots,x_{n+2})
\end{aligned}
$$
and if $x_{i+2} \neq 1_M$, then  $\left(D(s_{i+1})(T_n(p))\right)(x_1,\cdots,x_{n+2})
=T_{n+1}(D(s_i)(p))(x_1,\cdots,x_{n+2})=0$.
It remains to prove that 
$D(d_0)\circ T = \Id_{D(NM)}$. 
We have
$$
D(d_0)(T_n(p))(x_1,\cdots,x_n)=
\sum_{y \in M}T_n(p)(y,x_1,\cdots,x_n)= 
\sum_{y \in M}
\frac{p(x_1,\cdots,x_n)}{|M|} =p(x_1,\cdots,x_n).
$$
\end{proof}

{Using the $T$ map we can prove the following contractibility result.}

\begin{pro}\label{pro: DNGContr}
The simplicial sets $D(N{M})$ and $D(\Dec^0(N{M}))$ are both contractible.  
\end{pro}
\begin{proof} 
The simplicial set $D(N{M})$ is reduced, so by  \cite[Lemma 2.6]{stevenson2011d} we {obtain} that 
$Dec^0(D(N{M}))=D(Dec^0(N{M}))$ is contractible. {Therefore} by Lemma
\ref{TheSeccc} 
%
%
the map $\Id_{D(N{M})}$ is nullhomotopic, which means that $D(N{M})$ is contractible. 
\end{proof}

\begin{pro}\label{pro:TConv}
The map $T: D(NM)\to D(\Dec^0(NM))$ belongs to  
$s\catConv$.
\end{pro}
\begin{proof}
We need to prove that $T_n$ is a convex map,
 for every $n\geq 0$. Given $p,q \in D(M^n)$, $\alpha \in [0,1]$, and $m_1,\cdots, m_{n+1} \in M$, 
 we have
$$ 
\begin{aligned}
T_n(({\alpha}p + (1-\alpha)q))(m_1,\cdots,m_{n+1})&=
 \frac{({\alpha}p + (1-\alpha)q)(m_2,\cdots,m_{n+1})}{|M|} \\
&= \alpha\frac{p(m_2,\cdots,m_{n+1})}{|M|}+
(1-\alpha)\frac{q(m_2,\cdots,m_{n+1})}{|M|}
\\
&= \alpha T_n(p)(m_1,\cdots,m_{n+1})+
(1-\alpha)T_n(q)(m_1,\cdots,m_{n+1})\\
&= (\alpha T_n(p)+
(1-\alpha)T_n(q))(m_1,\cdots,m_{n+1}).
\end{aligned}
$$
\end{proof}

{Next, we turn to applications to simplicial distributions.}

\begin{corollary}
If $\catsSet(X,D(NM))$ contains a contextual distribution then $\catsSet(X,D(\Dec^0 NM))$ also contains a contextual distribution.
\end{corollary}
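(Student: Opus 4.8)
The plan is to transport a given contextual distribution across the retraction furnished by Lemma~\ref{TheSeccc}, exploiting the fact that pushing forward along $d_0$ cannot create contextuality (Proposition~\ref{lem:Exten2}). This is essentially a formal section/retract argument, so the only substantive input is the existence of the simplicial section $T$.

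First I would take a contextual distribution $p\in\catsSet(X,D(NM))$ and form the composite
$$
\tilde p = T\circ p : X \to D(\Dec^0(NM)).
$$
Since $T:D(NM)\to D(\Dec^0(NM))$ is a simplicial set map by Lemma~\ref{TheSeccc}, the composite $\tilde p$ is an honest simplicial set map, hence a legitimate simplicial distribution on the scenario $(X,\Dec^0(NM))$. (Note that we need no convexity of $T$ for $\tilde p$ to be a valid distribution; Proposition~\ref{pro:TConv} is not required for this corollary.)

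Next I would compute the pushforward of $\tilde p$ along the augmentation $d_0:\Dec^0(NM)\to NM$. Because $(d_0)_\ast$ is precisely post-composition with $D(d_0)$, the section identity $D(d_0)\circ T=\Id_{D(NM)}$ of Lemma~\ref{TheSeccc} yields
$$
(d_0)_\ast(\tilde p) = D(d_0)\circ T\circ p = p.
$$
Finally I would apply Proposition~\ref{lem:Exten2} in contrapositive form, with $f=d_0$: if $\tilde p$ were non-contextual, then its pushforward $(d_0)_\ast(\tilde p)=p$ would be non-contextual, contradicting the hypothesis. Hence $\tilde p$ is contextual, which exhibits the required contextual distribution in $\catsSet(X,D(\Dec^0NM))$.

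There is no real obstacle in this argument; the two points requiring care are that $\tilde p=T\circ p$ is read as a genuine composite of simplicial set maps (so that it is automatically a simplicial distribution), and that Proposition~\ref{lem:Exten2} is invoked through its contrapositive rather than directly. All the genuine work has already been done in establishing that $T$ is a simplicial section of $D(d_0)$.
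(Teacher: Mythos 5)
Your proof is correct and follows essentially the same route as the paper: the paper deduces surjectivity of $D(d_0)_\ast$ from the section $T$ of Lemma~\ref{TheSeccc} and picks a preimage $q$ of $p$, which is exactly your $\tilde p = T\circ p$, and then applies Proposition~\ref{lem:Exten2} in contrapositive form. Your version merely makes the preimage explicit.
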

\begin{proof}
Suppose that $p \in \catsSet(X,D(NM))$ is a contextual distribution. By Lemma \ref{TheSeccc} we have that the map 
$D(d_0)_\ast:\catsSet(X,D(\Dec^0 NM)) \to \catsSet(X,D(NM))$ is surjective. Therefore there exists $q \in \catsSet(X,D(\Dec^0 NM) )$ such that $D(d_0)_\ast(q)=p$. By Proposition \ref{lem:Exten2} we conclude that $q$ is contextual.
\end{proof}

\begin{corollary} 
{The} polytope $\catsSet(X,D(NM))$ is a subpolytope of $\catsSet(X,D(Dec^0(NM)))$, i.e., the map $T_*$ is injective and convex.  
\end{corollary}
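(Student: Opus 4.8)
The plan is to establish the two asserted properties of $T_\ast$ separately—convexity and injectivity—and then read off the subpolytope conclusion, since an injective affine map of polytopes identifies its domain with a subpolytope of its codomain. Throughout, $T_\ast = \catsSet(X,T)$ denotes post-composition with the simplicial set map $T:D(NM)\to D(\Dec^0(NM))$ of Lemma \ref{TheSeccc}, sending $p\mapsto T\circ p$.

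For convexity, the cleanest route is to invoke the functoriality of Proposition \ref{pro:Pro 2.15}. By Proposition \ref{pro:TConv} the map $T$ is a morphism in $s\catConv$, so applying the functor $\catsSet(X,-)\colon s\catConv_R\to\catConv_R$ to $T$ shows that $T_\ast$ is a morphism of $R$-convex sets, i.e.\ an affine map. If one prefers to argue directly, one checks on each simplex $x\in X_n$ that $\big(T_\ast(\alpha p+(1-\alpha)q)\big)_n(x)=T_n\big(\alpha\,p_n(x)+(1-\alpha)\,q_n(x)\big)$, using that evaluation of a convex combination at $x$ is the corresponding convex combination of evaluations by Equation (\ref{piForm}); the convexity of each $T_n$ from Proposition \ref{pro:TConv} then rewrites this as $\alpha\,(T_\ast p)_n(x)+(1-\alpha)\,(T_\ast q)_n(x)$. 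This is just the functoriality statement unwound through the pointwise convex structure.

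Injectivity is immediate from the fact that $T$ is a section. By Lemma \ref{TheSeccc} we have $D(d_0)\circ T=\Id_{D(NM)}$, and applying $\catsSet(X,-)$ gives $D(d_0)_\ast\circ T_\ast=\Id_{\catsSet(X,D(NM))}$. Hence $T_\ast$ admits $D(d_0)_\ast$ as a left inverse and is therefore injective; concretely, $T_\ast p=T_\ast q$ forces $p=D(d_0)_\ast T_\ast p=D(d_0)_\ast T_\ast q=q$.

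Finally, since $X$ has finitely many non-degenerate simplices, both $\catsSet(X,D(NM))$ and $\catsSet(X,D(\Dec^0(NM)))$ are convex polytopes, and an injective affine map identifies $\catsSet(X,D(NM))$ with its image, a polytope sitting inside $\catsSet(X,D(\Dec^0(NM)))$, which is the meaning of being a subpolytope. I do not expect a genuine obstacle: the whole statement is a formal consequence of Lemma \ref{TheSeccc} together with Propositions \ref{pro:Pro 2.15} and \ref{pro:TConv}. The only mild point requiring care is reading ``subpolytope'' precisely as ``the image under an affine injection''; once this is fixed, nothing further is needed.
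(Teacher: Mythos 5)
Your proof is correct and follows essentially the same route as the paper, which likewise deduces convexity from Propositions \ref{pro:TConv} and \ref{pro:Pro 2.15} and then appeals to injectivity of $T$. The only cosmetic difference is that you derive injectivity of $T_\ast$ from the retraction $D(d_0)\circ T=\Id$ of Lemma \ref{TheSeccc}, whereas the paper simply notes that $T$ itself is injective; both are immediate consequences of the same lemma.
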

\begin{proof}
This follows from Proposition \ref{pro:TConv}, Proposition \ref{pro:Pro 2.15},
  and the fact that 
$T$ is injective.
\end{proof}
\begin{example}{\rm
Let $X$ be a $1$-skeletal simplicial set with $n$ non-degenerate $1$-simplices. 
Then $\catsSet(X,D(N\zz_2))\cong [0,1]^n$ is a subpolytope of $\catsSet(X,D(Dec^0 N\zz_2 ))$.
}
\end{example}
%


{Let $G$ be a finite group.}
Given $p,q\in D(G)$, the convolution product is defined by
$$
p\ast q(g) =\sum_{{g_1\cdot g_2}=g} p(g_1)q(g_2)
$$
where the sum runs over pairs $(g_1,g_2)\in G^2$ such that $g_1\cdot g_2=g$.

\begin{lem}\label{THommm} 
For distributions $p,q\in D(G^n)$, we have 
$T_n(p\ast q)=T_n(p)\ast T_n(q)$.
\end{lem}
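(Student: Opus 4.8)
The plan is to prove the identity pointwise, evaluating both sides at an arbitrary tuple $(m_1,\dots,m_{n+1})\in G^{n+1}$ and using that the convolution on $D(G^n)$ is the one induced by the product group structure on $G^n$, so that
$$
(p\ast q)(m_1,\dots,m_n)=\sum_{a_ib_i=m_i,\ 1\le i\le n} p(a_1,\dots,a_n)\,q(b_1,\dots,b_n),
$$
the sum running over all componentwise factorizations. Since the claimed equation is between two elements of $D(G^{n+1})$, it suffices to check that they agree at every point.

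First I would expand the right-hand side. Unwinding the definition of $T_n$ and of the convolution on $D(G^{n+1})$,
$$
(T_n(p)\ast T_n(q))(m_1,\dots,m_{n+1})
=\sum_{a_ib_i=m_i,\ 1\le i\le n+1}\frac{p(a_2,\dots,a_{n+1})}{|G|}\cdot\frac{q(b_2,\dots,b_{n+1})}{|G|}.
$$
The crucial observation is that the summand does not depend on the first-coordinate pair $(a_1,b_1)$; this pair enters only through the constraint $a_1b_1=m_1$. Because $G$ is a group, for each $a_1\in G$ the element $b_1=a_1^{-1}m_1$ is uniquely determined, so there are exactly $|G|$ admissible pairs $(a_1,b_1)$.

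Performing the sum over $(a_1,b_1)$ therefore contributes a factor of $|G|$, which cancels one of the two factors of $1/|G|$. The remaining sum ranges over the factorizations $a_ib_i=m_i$ for $2\le i\le n+1$, and is by definition $(p\ast q)(m_2,\dots,m_{n+1})$. Hence the right-hand side equals $(p\ast q)(m_2,\dots,m_{n+1})/|G|$, which is exactly $T_n(p\ast q)(m_1,\dots,m_{n+1})$, yielding the claim.

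I do not anticipate a genuine obstacle here beyond careful bookkeeping; the one point that must be used deliberately is that $G$ is a group rather than merely a monoid, since it is the invertibility in $G$ that makes the count of solutions to $a_1b_1=m_1$ equal to $|G|$ independently of $m_1$, and hence guarantees the exact cancellation of the extra normalization factor introduced by the degeneracy-like shift built into $T_n$.
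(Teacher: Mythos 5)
Your proof is correct and is essentially the paper's own argument read in the opposite direction: the paper starts from $T_n(p\ast q)$ and inserts the sum $\sum_{x_1y_1=g_1}\frac{1}{|G|}=1$ over the first coordinate, while you start from $T_n(p)\ast T_n(q)$ and collapse that same sum; both hinge on the group structure giving exactly $|G|$ solutions to $a_1b_1=m_1$, which cancels one normalization factor. No issues.
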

\begin{proof}
Given $(g_1,\cdots,g_{n+1}) \in G^{n+1}$, we have
{
$$
\begin{aligned}
T_n(p \ast q)(g_1,\cdots,g_{n+1})
&=\frac{(p\ast q)(g_2,\cdots, g_{n+1})}{|G|} \\
&=\sum_{ x_i \cdot y_i=g_i\, \text{for} \,\, i \geq 2}\frac{p(x_2,\cdots, x_{n+1})q(y_2,\cdots, y_{n+1})}{|G|} \\
&=\sum_{ x_1 \cdot y_1=g_1}\frac{1}{|G|}
\sum_{ x_i \cdot y_i=g_i \, \text{for} \,\, i \geq 2}\frac{p(x_2,\cdots,x_{n+1})q(y_2,\cdots,y_{n+1})
}{|G|} 
\\
&=\sum_{ x_i \cdot y_i=g_i\, \text{for} \,\, i \geq 1}
\frac{p(x_2,\cdots,x_{n+1})}{|G|} 
\frac{q(y_2,\cdots,y_{n+1})}{|G|}
\\
&=\sum_{ x_i \cdot y_i=g_i
\, \text{for} \,\, i \geq 1}
T_n(p)(x_1,\cdots,x_{n+1})
T_n(q)(y_1,\cdots,y_{n+1})\\
&=(T_n(p) \ast T_n(q))(g_1,\cdots,g_{n+1}).
\end{aligned}
$$
}
%
%
%
\end{proof}
%

%
%

\begin{pro}
Let $G$ be a finite 
group.
Then the group $\catsSet(X,NG)$ can be embedded in the  monoid $
\catsSet(X,D(\Dec^0
NG))$ by the injective 
semi-group homomorphism
$T_\ast \circ (\delta_{NG})_\ast$.
Moreover, for every map $\varphi \in \catsSet(X,NG)$ that is not null-homotopic, 
the distribution $T_\ast \circ (\delta_{NG})_\ast
(\varphi)$ is strongly contextual.                 
\end{pro}
\begin{proof}
Follows directly from Lemmas \ref{TheSeccc}, \ref{THommm}, and Proposition \ref{NotnullSC}.
\end{proof}

Using this result, Proposition \ref{pro:number of not null-homotopic maps NG}, and part $(3)$ of Proposition \ref{pro:ElemConv} we obtain the following result. 
%
\begin{corollary}\label{cor:XNG}
Let $X$ be a $1$-skeletal {connected} simplicial set such that $n=|X_0|$ and $m$ denote the number of non-degenerate {$1$-simplices of $X$}. There exists at least $|G|^m-|G|^{n-1}$ {strongly} contextual vertices in
$\catsSet(X,D(\Dec^0
NG))$.
\end{corollary}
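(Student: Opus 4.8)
The plan is to assemble the three ingredients the corollary cites and count carefully. First I would recall the previous proposition: the composite $T_\ast\circ(\delta_{NG})_\ast:\catsSet(X,NG)\to\catsSet(X,D(\Dec^0 NG))$ is an injective semigroup homomorphism, and it carries every non-null-homotopic map $\varphi$ to a strongly contextual distribution. By Proposition \ref{pro:number of not null-homotopic maps NG}, for $X$ a $1$-skeletal connected simplicial set with $n=|X_0|$ and $m$ non-degenerate $1$-simplices, the number of maps in $\catsSet(X,NG)$ that are \emph{not} null-homotopic is exactly $|G|^m-|G|^{n-1}$. Composing these two facts with injectivity of $T_\ast\circ(\delta_{NG})_\ast$ immediately produces $|G|^m-|G|^{n-1}$ distinct strongly contextual distributions in $\catsSet(X,D(\Dec^0 NG))$.

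The remaining point, and the only one requiring a genuine argument, is to upgrade ``strongly contextual distribution'' to ``strongly contextual \emph{vertex}.'' Here I would invoke part (3) of Proposition \ref{pro:ElemConv}: the map $D(d_0)_\ast:\catsSet(X,D(\Dec^0 NG))\to\catsSet(X,D(NG))$ lies in $\catConv_R$ (by Proposition \ref{pro:Pro 2.15}), and each distribution $T_\ast\circ(\delta_{NG})_\ast(\varphi)$ sits in the fiber $\Facc(\varphi)=\big(D(d_0)_\ast\big)^{-1}(\delta^\varphi)$, since $T$ is a section of $D(d_0)$ by Lemma \ref{TheSeccc}. Because $\varphi$ is a simplicial set map, $\delta^\varphi$ is a vertex of $\catsSet(X,D(NG))$ by Proposition \ref{pro:Pro 5.14}. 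By part (3) of Proposition \ref{pro:ElemConv}, any vertex of the fiber $\Facc(\varphi)$ is a vertex of the ambient convex set $\catsSet(X,D(\Dec^0 NG))$; and since $\delta^{\varphi}$ is a deterministic distribution, its image $T_\ast\circ(\delta_{NG})_\ast(\varphi)$ is itself extreme in its fiber (indeed it is the image of a vertex under the convex section $T_\ast$), hence a vertex of the whole polytope.

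I expect the main obstacle to be precisely this last verification that the $T_\ast\circ(\delta_{NG})_\ast(\varphi)$ are genuinely vertices rather than merely strongly contextual points. The cleanest route is to observe that $T_\ast\circ(\delta_{NG})_\ast(\varphi)$ lies in $V(\Facc(\varphi))$ — it is the unique distribution in $\Facc(\varphi)$ in the image of the section $T_\ast$ applied to the extreme point $\delta^\varphi$ — and then apply part (1) of Proposition \ref{pro:FaccPrimeFill}, which states that every element of $V(\Facc(\varphi))$ is a vertex of $\catsSet(X,D(\Dec^0 NG))$. This is the conceptual heart of the counting: distinct non-null-homotopic $\varphi$ give distinct faces $\Facc(\varphi)$ (their images under $D(d_0)_\ast$ are the distinct vertices $\delta^\varphi$), so the resulting vertices are pairwise distinct, yielding the asserted lower bound of $|G|^m-|G|^{n-1}$ strongly contextual vertices.
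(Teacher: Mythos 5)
Your first paragraph (the count of non-null-homotopic maps combined with injectivity of $T_\ast\circ(\delta_{NG})_\ast$ and Proposition \ref{NotnullSC}) is fine, and your identification of the remaining issue --- upgrading ``strongly contextual distribution'' to ``strongly contextual vertex'' --- is exactly right. But the way you close that gap is wrong. You claim that $T_\ast\circ(\delta_{NG})_\ast(\varphi)$ lies in $V(\Facc(\varphi))$ because ``it is the image of a vertex under the convex section $T_\ast$.'' An injective convex map sends a vertex of the domain to a vertex of its \emph{image}, not to a vertex of the ambient codomain or of the fiber in which it sits; nothing forces $T_\ast(\catsSet(X,D(NG)))$ to be a face of $\catsSet(X,D(\Dec^0 NG))$. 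Concretely, take $G=\ZZ_4$ and $X$ a circle with a single edge $\sigma$, with $\varphi_\sigma=2$. Then $\varphi$ is not null-homotopic, $H_\varphi=\{0,2\}$, and by Proposition \ref{pro:FaccCharac} the fiber $\Facc(\varphi)\cong D(\ZZ_4/\{0,2\})$ is a $1$-simplex. Under the identification $\eta$ of Proposition \ref{FacNull}, $T_\ast(\delta^\varphi)$ corresponds to the uniform distribution on $\ZZ_4$ (since $T_0$ sends the unique distribution on a point to the uniform distribution on $G$), i.e.\ to the \emph{midpoint} of that segment. So $T_\ast(\delta^\varphi)$ is not a vertex of $\Facc(\varphi)$, hence not a vertex of the polytope, and your argument produces no vertex at all for this $\varphi$. (Your claim happens to hold when $G=\ZZ_p$ with $p$ prime, where the fiber is a single point by Proposition \ref{pro:FacUnique}, but the corollary is stated for arbitrary finite $G$.)

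The repair --- which is what the paper intends by citing part (3) of Proposition \ref{pro:ElemConv} --- is to use $T_\ast(\delta^\varphi)$ only to show that the fiber $\Facc(\varphi)=\bigl(D(d_0)_\ast\bigr)^{-1}(\delta^\varphi)$ is \emph{nonempty}, not to exhibit the vertex itself. Since $X$ has finitely many non-degenerate simplices, $\Facc(\varphi)$ is a nonempty compact convex polytope and therefore possesses at least one vertex; by part (1) of Proposition \ref{pro:FaccPrimeFill} (equivalently, part (3) of Proposition \ref{pro:ElemConv} applied to the convex map $D(d_0)_\ast$ and the vertex $\delta^\varphi$) any such vertex is a vertex of $\catsSet(X,D(\Dec^0 NG))$, and by Proposition \ref{NotnullSC} it is strongly contextual since $\varphi$ is not null-homotopic. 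Distinct $\varphi$ give disjoint fibers, so choosing one vertex in each yields the asserted $|G|^m-|G|^{n-1}$ strongly contextual vertices. Your disjointness observation at the end is correct and carries over unchanged.
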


\subsection{Description of the faces}
\label{sec:description of the faces}

In this section we describe the face $\Facc(\varphi)$ defined at a deterministic distribution $\delta^\varphi$ (Definition \ref{FacdefNH}). We begin with an extension result satisfied by $D_R(NG)$.  
{Let us define
\begin{equation}\label{eq:q-delta}
(q,\delta^g):\Lambda_i[2]\to D_R(NG)
\end{equation}
by sending $d_0\sigma\mapsto\delta^g$ and the other non-degenerate face of $\sigma$ to $q$, where $\sigma=\sigma^{012}$ is the generating simplex of $\Delta[2]$. 
} 
 
\begin{lemma}\label{ComppTrain1}
Given $g \in G$, $q \in D_R(G)$, and {$i\in\set{1,2}$}. The map $(q,\delta^g)$ {in (\ref{eq:q-delta})} extends to the $2$-simplex: 
%
\begin{equation}\label{dia:Exten}
\begin{tikzcd}[column sep=huge,row sep=large]
\Lambda_i[2] \arrow[r,"(q{,}\delta^g)"] \arrow[d,hook] & D_R(NG) \\
\Delta[2] \arrow[ru,dashed,"p"'] &
\end{tikzcd}
\end{equation}
such that $$
D_R({d_i})(p)=
\begin{cases}
q\ast \delta^g  & \text{if}  \;\; {i=1}  \\ 
q\ast \delta^{g^{-1}}  & \text{if}  \;\; {i=2}.
\end{cases}
$$
Moreover, the extension is unique.
\end{lemma}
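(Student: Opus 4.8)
The plan is to reduce the horn-filling to an explicit computation with distributions on $G^2$. By Yoneda, a simplicial set map $p:\Delta[2]\to D_R(NG)$ is the same datum as a single distribution $p\in D_R(NG)_2 = D_R(G^2)$, and its restriction to the horn $\Lambda_i[2]$ is recorded entirely by the faces $D_R(d_j)(p)$ for $j\neq i$. Recalling the nerve face maps $d_0(g_1,g_2)=g_2$, $d_1(g_1,g_2)=g_1g_2$, and $d_2(g_1,g_2)=g_1$, filling the horn amounts to producing a distribution on $G^2$ with two prescribed \emph{marginals} and then reading off the third face.

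First I would handle $i=1$, where the horn data are $D_R(d_0)(p)=\delta^g$ and $D_R(d_2)(p)=q$. The first equation says the second-coordinate marginal $\sum_{g_1}p(g_1,g_2)$ equals $\delta^g$; since $R$ is zero-sum-free this forces $p(g_1,g_2)=0$ whenever $g_2\neq g$. The second equation then reads $p(g_1,g)=q(g_1)$, so $p$ is completely determined as the product $p(g_1,g_2)=q(g_1)\,\delta^g(g_2)$. This yields both existence (this product distribution manifestly restricts to $(q,\delta^g)$ on the horn) and uniqueness. The remaining face is then $D_R(d_1)(p)(h)=\sum_{g_1g_2=h}q(g_1)\delta^g(g_2)=q(hg^{-1})=(q\ast\delta^g)(h)$, which is the asserted value.

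The case $i=2$ runs along identical lines. The equation $D_R(d_0)(p)=\delta^g$ again confines the support to $g_2=g$, while $D_R(d_1)(p)=q$ now reads $p(hg^{-1},g)=q(h)$, i.e.\ $p(g_1,g_2)=q(g_1g)\,\delta^g(g_2)$, again uniquely. Computing the last face gives $D_R(d_2)(p)(g_1)=p(g_1,g)=q(g_1g)=(q\ast\delta^{g^{-1}})(g_1)$, as required.

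I do not expect a genuine obstacle here: the heart of the matter is simply that fixing the $d_0$-marginal together with one of the $d_1$- or $d_2$-marginals pins down a distribution on $G^2$ uniquely, and zero-sum-freeness of $R$ is exactly what turns the $\delta^g$ marginal condition into a support constraint. The only points requiring care are keeping the nerve's face-map conventions straight and matching the two computed marginals to the convolutions $q\ast\delta^g$ and $q\ast\delta^{g^{-1}}$; because $NG$ has a single vertex, no vertex-compatibility conditions arise in checking that $(q,\delta^g)$ is well defined on the horn, so existence of the filler reduces to the explicit formula.
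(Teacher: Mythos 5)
Your proof is correct and follows essentially the same route as the paper's: construct the filler explicitly as the product distribution $p(g_1,g_2)=q(g_1)\delta^g(g_2)$ (resp.\ $q(g_1g)\delta^g(g_2)$), read off the remaining face as the stated convolution, and derive uniqueness from the fact that the $d_0$-marginal $\delta^g$ together with zero-sum-freeness of $R$ forces the support onto $\{g_2=g\}$. The only difference is that you carry out the $i=2$ case explicitly, whereas the paper treats only $i=1$ and leaves the other case to the reader.
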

\begin{proof}
We will prove this in the case that $i=1$. Let us define {$p\in D_R(G\times G)$} 
by
$$
p(g_1,g_2)=
\begin{cases}
q(g_1)  & \text{if} ~ g_2 = g,\\
0 & \text{otherwise.}
\end{cases}
$$
%
%
%
For $x\in G$, we have
$$
\begin{aligned}
D_R(d_2)(p)(x) &=
\displaystyle\sum_{g_2 \in G}p(x,g_2)\\
&=p(x,g)=q(x),
\end{aligned}
$$ 
hence $D_R(d_2)(p)=q$. Moreover, if $x \neq g$, then
$$
D_R(d_0)(p)(x)=
\displaystyle\sum_{g_1 \in G}p(g_1,x)=0.
$$ 
Therefore $D_R(d_0)(p)=\delta^g$. 
Now, we prove that $D_R(d_1)({p})=q \ast \delta^g$:
$$
\begin{aligned}
D_R(d_1)(p)(x) &=\displaystyle\sum_{x_1\cdot x_2 =x}p(x_1,x_2)\\
&=
\displaystyle\sum_{x_2 \in G}p(x\cdot x_2^{-1},x_2) \\
&=p(xg^{-1},g)\\
&=q(xg^{-1})=(q \ast \delta^g)(x).
\end{aligned}
$$
Suppose there exists another extension $\tilde{p}$ in Diagram (\ref{dia:Exten}), 
then $\tilde{p}(g_1,g_2)=0$ for $g_2\neq g$ because 
$D_R(d_0)(\tilde{p})=\delta^g$.
{Therefore} we obtain   
$$\tilde{p}(g_1,g)=\displaystyle\sum_{g_2 \in G}\tilde{p}(g_1,g_2)
=d_2(\tilde{p})(g_1)=q(g_1).$$
\end{proof}
%


\begin{pro}\label{FacNull}
Let $X$ be a connected 
$1$-skeletal simplicial set.
Given a simplicial set map $\varphi: X \to NG$ and  $v \in X_0$, 
the map  
$$
\eta : \Facc(\varphi) \to D_R(G)
$$
defined by
$
\eta(p)=p_0(v)
$
is an injective $R$-convex map.
\end{pro}
\begin{proof}
We 
{begin}
by proving the injectivity of $\eta$. Given $p\in \Facc(\varphi)$,   
let $p':\Delta^0 \ast X \to D(NG)$ be the adjoint of $p$ under the adjunction in (\ref{eq:Cone vs declage adjunction}). 
Let $\sigma$ be an edge in $X$ such that $d_{0}(\sigma)$ or $d_1(\sigma)$ equal to $v$. By Lemma \ref{lem:transcomm} and  because $p \in \Facc(\varphi)$ we obtain 
$$
p'_1(\sigma)=(D_R(d_0) \circ p)_1(\sigma)=
\delta^{\varphi_1(\sigma)}.
$$ 
By Lemma \ref{ComppTrain1} 
$p_1(\sigma)=p'_2(c_0,\sigma)$ is determined by $p'_1(c_0,v)=p_0(v)$.  
Because $X$ is a $1$-skeletal connected simplicial set we obtain that $p$ is uniquely determined by $p_0(v)$.
For the convexity of $\eta$, given $Q \in D_R(\Facc(\varphi))$, we need to prove that 
$\eta(\nu(Q))=\mu_G\left(D_R(\eta)(Q)\right)$:
$$
\begin{aligned}
\eta(\nu(Q))&=
\nu(Q)_0(v)\\
&=
\sum_{p\in \Facc(\varphi)}
Q(p)p_0(v)
\\
&= \sum_{p\in \Facc(\varphi)}
Q(p)\eta(p) \\
&=\sum_{q\in D_R(G)}(\sum_{\eta(p)=q}
Q(p))q \\
&=
\sum_{q\in D_R(G)}
D_R(\eta)(Q)(q)q \\
&=\mu_G\left(D_R(\eta)(Q)\right).
\end{aligned}
$$

\end{proof}

\begin{lem}\label{lem:DistOnLine}
Let $\varphi: X \to NG$ be a simplicial set map  and $L \subset X$ be a line such that {$u$ and $v$ are the initial and the terminal vertices of $L$ (see Definition \ref{def:Llinee}).}
Then for every simplicial distribution $p \in \Facc(\varphi)$ we have 
$$
p_0(v)=p_0(u) \ast \delta^{(L,\varphi|_{L})}
$$
(For the definition of $(L,\varphi|_{L})$ see Equation (\ref{eq:C varphi}).)

\end{lem}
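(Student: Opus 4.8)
The plan is to prove the identity one edge at a time and then chain the results along the line by induction. Write the defining simplices of $L$ as $\sigma_1,\dots,\sigma_n$ and label the intermediate vertices $v_0=u,v_1,\dots,v_n=v$, so that $d_{i_k}(\sigma_k)=v_{k-1}$ and $d_{i'_k}(\sigma_k)=v_k$ with $i_k=1-i'_k$ (see Definition \ref{def:Llinee}). Since convolution of Dirac distributions multiplies group elements, $\delta^{g_1}\ast\cdots\ast\delta^{g_n}=\delta^{g_1\cdots g_n}=\delta^{(L,\varphi|_L)}$ by Equation (\ref{eq:C varphi}), so it suffices to establish the single-edge relation $p_0(v_k)=p_0(v_{k-1})\ast\delta^{g_k}$ for each $k$ and then compose.

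To treat one edge, I pass to the adjoint $p':\Delta[0]\ast X\to D(NG)$ of $p$ under the adjunction (\ref{eq:Cone vs declage adjunction}) and examine the non-degenerate $2$-simplex $(c_0,\sigma_k)$ of the cone, whose three faces are $d_0(c_0,\sigma_k)=\sigma_k$, $d_1(c_0,\sigma_k)=(c_0,d_0\sigma_k)$, and $d_2(c_0,\sigma_k)=(c_0,d_1\sigma_k)$. Because $p\in\Facc(\varphi)$, Lemma \ref{lem:transcomm} gives $p'_1(\sigma_k)=\delta^{\varphi_{\sigma_k}}$, while the adjunction identifies the cone edges with the vertex data, $p'_1(c_0,w)=p_0(w)$ for $w\in X_0$ (both facts as in the proof of Proposition \ref{FacNull}). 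Thus on the triangle $(c_0,\sigma_k)\cong\Delta[2]$ the distribution $p'$ has $d_0$-face equal to the Dirac distribution $\delta^{\varphi_{\sigma_k}}$, $d_2$-face equal to $p_0(d_1\sigma_k)$, and $d_1$-face equal to $p_0(d_0\sigma_k)$. Feeding the $\Lambda_1[2]$-data $(q,\delta^g)=(p_0(d_1\sigma_k),\delta^{\varphi_{\sigma_k}})$ into Lemma \ref{ComppTrain1} and invoking the uniqueness of the filler forces the remaining face value, yielding $p_0(d_0\sigma_k)=p_0(d_1\sigma_k)\ast\delta^{\varphi_{\sigma_k}}$.

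It remains to translate this into the oriented relation along $L$. If $i_k=1$ then $d_1\sigma_k=v_{k-1}$, $d_0\sigma_k=v_k$, and $g_k=\varphi_{\sigma_k}$, so the displayed identity reads exactly $p_0(v_k)=p_0(v_{k-1})\ast\delta^{g_k}$. If $i_k=0$ then $d_0\sigma_k=v_{k-1}$, $d_1\sigma_k=v_k$, and $g_k=\varphi_{\sigma_k}^{-1}$, and right-convolving $p_0(v_{k-1})=p_0(v_k)\ast\delta^{\varphi_{\sigma_k}}$ with $\delta^{\varphi_{\sigma_k}^{-1}}$ again gives $p_0(v_k)=p_0(v_{k-1})\ast\delta^{g_k}$. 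Induction on $k$ then yields $p_0(v)=p_0(u)\ast\delta^{g_1}\ast\cdots\ast\delta^{g_n}=p_0(u)\ast\delta^{(L,\varphi|_L)}$. The main obstacle is bookkeeping rather than conceptual: correctly identifying the three faces of $(c_0,\sigma_k)$, confirming the two adjunction identifications $p'_1(\sigma_k)=\delta^{\varphi_{\sigma_k}}$ and $p'_1(c_0,w)=p_0(w)$, and keeping the two orientation cases $i_k\in\{0,1\}$ straight so that the inverse in the definition of $g_k$ matches the convolution algebra.
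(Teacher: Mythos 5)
Your proof is correct and is essentially the paper's argument: the paper's own proof is the one-line ``Follows from Lemma \ref{ComppTrain1}'', and you have supplied precisely the intended details --- passing to the adjoint $p'$, reading off the three faces of the cone triangle $(c_0,\sigma_k)$, applying the unique $\Lambda_1[2]$-filler of Lemma \ref{ComppTrain1} to get $p_0(d_0\sigma_k)=p_0(d_1\sigma_k)\ast\delta^{\varphi_{\sigma_k}}$, and chaining along the line with the two orientation cases handled as in Equation (\ref{Eq:gkvarph}).
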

\begin{proof}
 Follows from Lemma \ref{ComppTrain1}. 
\end{proof}
\begin{defn}\label{Hvarph}{\rm
Let $X$ be a $1$-skeletal simplicial set  and $\varphi:X\to NG$ be a simplicial set map. 
We define $H_{\varphi}\subset G$ to be the subgroup  
generated by the elements $(C,\varphi|_{C})\in G$  for every circle $C\subset X$. 
We write $G/\varphi$ for the set of 
orbits in $G$ under the action of $H_{\varphi}$ by multiplication from the right.
}     
\end{defn}
%


{
We will identify $D_R(G/\varphi)$ with the set $D_R(G)^{H_\varphi}$ of distributions $p:G\to R$ satisfying $p(g\cdot h)=p(g)$ for all $h\in H_\varphi$ by regarding a distribution $q:G/\varphi\to R$ as a distribution $\tilde q:G\to R$ invariant under $H_\varphi$ by defining 
$$
\tilde q(g)= \frac{q(\bar g)}{|H_\varphi|}
$$
where $\bar g$ is the image of $g$ under the quotient map $G\to G/\varphi$.}

\begin{pro}\label{pro:FaccCharac}
Let $G$ be an abelian group, and $X$ be a connected $1$-skeletal simplicial set.
For $\varphi \in \catsSet(X,NG)$, the $R$-convex set $\Facc(\varphi)$ is isomorphic to $D_R(G/\varphi)$. 
\end{pro}
\begin{proof}
Fix $v \in X_0$. In Proposition \ref{FacNull} we showed that 
$$
\eta : \Facc(\varphi) \to D_R(G)
$$
defined by
$
\eta(p)=p_0(v)
$
is an injective convex map. It remains to prove that the image of $\eta$ is equal to $D_R(G/\varphi)$. 
{Let $C\subset X$ be a circle.}
{Since}
$X$ is connected there {exists} a line $L$ from any vertex $u$ in $C$ to the vertex $v$. By Lemma \ref{lem:DistOnLine} we have 
$$
p_0(v)=p_0(u) \ast \delta^{(L,\varphi|_{L})} \;\; \text{ and } \;\; 
p_0(u)=p_0(u) \ast \delta^{(C,\varphi|_{C})}.
$$
We conclude that 
$$
\begin{aligned}
p_0(v)&=p_0(u) \ast \delta^{(L,\varphi|_{L})}\\
&=
p_0(u) \ast \delta^{(C,\varphi|_{C})} \ast \delta^{(L,\varphi|_{L})}\\
&=
p_0(u) \ast \delta^{(L,\varphi|_{L})} \ast \delta^{(C,\varphi|_{C})}\\
&=
p_0(v) \ast \delta^{(C,\varphi|_{C})}.
\end{aligned}
$$
This implies that for every $g \in G$ we have 
$
p_0(v)(g)=p_0(v)(g\cdot(C,\varphi|_{C})^{-1})
$.
So for every ${h} \in H_{\varphi}$ we have 
$$
p_0(v)(g)=p_0(v)(g\cdot {h}).
$$
Therefore $p_0(v)$ is indeed a distribution on $G/\varphi$. 
On the other hand, 
for 
a distribution $q \in D_R(G/\varphi)$, {regarded as a distribution $\tilde q \in D_R(G)^{H_\varphi}$,}  
we can define $p \in \Facc(\varphi)$ by setting $p_0(v)={\tilde q}$.
{Then b}y Lemma \ref{ComppTrain1} and because $X$ is a connected $1$-skeletal simplicial set, this defines $p_1(\sigma)$ for every $\sigma \in X_1$.
{Moreover, this assignment is consistent since $p_0(w)=p_0(w)(g\cdot (C,\varphi|_{C}))$ for every $w \in X_0$, $g \in G$, and circle $C\subset X$.}
So we {obtain} a well-defined $p \in \Facc(\varphi)$ that satisfies $\eta(p)=q$.
\end{proof}

%
%
\begin{corollary}\label{cor:NullHomFaccc}
Let $G$ be an abelian group and $X$ be a connected $1$-skeletal simplicial set. The map $\varphi\in \catsSet(X,NG)$ is null-homotopic if and only if $\Facc(\varphi)\cong D_R(G)$. 
\end{corollary}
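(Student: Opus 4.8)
The plan is to read off this corollary as an immediate consequence of the characterization in Proposition \ref{pro:FaccCharac} together with the group-theoretic description of null-homotopy from Corollary \ref{cor:NullHomAct}. The key observation is that $H_\varphi$ is the subgroup of $G$ generated by the winding elements $(C,\varphi|_C)$ over all circles $C\subset X$, and that $\Facc(\varphi)\cong D_R(G/\varphi) = D_R(G/H_\varphi)$. So the whole statement reduces to tracking when $H_\varphi$ is trivial.

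First I would prove the reverse direction of a cleaner intermediate claim: $\varphi$ is null-homotopic if and only if $H_\varphi = \{1_G\}$. For the forward implication, if $\varphi$ is null-homotopic then by Corollary \ref{cor:NullHomAct} we have $(C,\varphi|_C)=1_G$ for every circle $C\subset X$; since $H_\varphi$ is generated by exactly these elements, $H_\varphi$ is the trivial subgroup. Conversely, if $H_\varphi=\{1_G\}$ then every generator $(C,\varphi|_C)$ equals $1_G$, so again by Corollary \ref{cor:NullHomAct} the map $\varphi$ is null-homotopic. This equivalence is the conceptual heart of the argument and uses only the already-established machinery.

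Next I would convert the condition $H_\varphi=\{1_G\}$ into the statement about $\Facc(\varphi)$. By Proposition \ref{pro:FaccCharac}, we have an isomorphism of $R$-convex sets $\Facc(\varphi)\cong D_R(G/\varphi)=D_R(G/H_\varphi)$. When $H_\varphi=\{1_G\}$ the quotient $G/H_\varphi$ is just $G$, so $\Facc(\varphi)\cong D_R(G)$, giving the ``only if'' direction. For the ``if'' direction I would argue that if $\Facc(\varphi)\cong D_R(G)$ then, since $\Facc(\varphi)\cong D_R(G/H_\varphi)$ and these are free convex sets on the sets $G$ and $G/H_\varphi$ respectively, the underlying sets must have the same cardinality, forcing $|G/H_\varphi|=|G|$ and hence $H_\varphi=\{1_G\}$ (using that $G$ is finite). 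Combined with the equivalence from the previous step, this yields the corollary.

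The main obstacle, and the only point requiring genuine care, is the ``if'' direction of the last step: from an abstract $R$-convex isomorphism $\Facc(\varphi)\cong D_R(G)$ one must legitimately extract the equality $|G/H_\varphi|=|G|$. The clean way is to note that vertices (extreme points) are preserved under $R$-convex isomorphisms, and that the vertices of a free convex set $D_R(S)$ are exactly the delta distributions $\delta^s$, which are in bijection with $S$ (here I would invoke that $R$ is integral so that Proposition \ref{pro:Pro 5.14}-type reasoning applies, or simply that $D_R(S)$ has exactly $|S|$ extreme points when $S$ is finite). Thus the number of vertices of $D_R(G/H_\varphi)$ is $|G/H_\varphi|$ and that of $D_R(G)$ is $|G|$; an isomorphism forces these to agree. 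I would make sure the finiteness hypotheses in force ($G$ finite, $X$ with $X_0,X_1$ finite) guarantee $G/H_\varphi$ is finite so this counting is valid.
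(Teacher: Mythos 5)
Your proposal is correct and follows essentially the same route as the paper, which simply cites Corollary \ref{cor:NullHomAct} and Proposition \ref{pro:FaccCharac}: null-homotopy is equivalent to $H_\varphi=\{1_G\}$, which is equivalent to $G/\varphi=G$. Your extra care in the ``if'' direction (counting vertices of $D_R(G/H_\varphi)$ versus $D_R(G)$ to extract $|G/H_\varphi|=|G|$ from an abstract $R$-convex isomorphism) is a legitimate detail the paper leaves implicit, not a different approach.
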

\begin{proof}
Follows from Corollary \ref{cor:NullHomAct} and Proposition \ref{pro:FaccCharac}.
\end{proof}

\begin{pro}\label{pro:FacUnique}
Let $G=\zz_d$ where $d$ is a prime number.
For every  $\varphi \in \catsSet(X,N\zz_d)$ that is not null-homotopic, the set $\Facc(\varphi)$ contains a unique 
simplicial distribution that is a strongly  contextual vertex
of $\catsSet(X,D_R(Dec^0N \zz_d))$. 
\end{pro}
\begin{proof}
By Corollary \ref{cor:NullHomAct} we have $(C,\varphi)\neq 0$ for some circle $C$ in $X$. Since the action of $H_{\varphi}$ on $\zz_d$ is transitive, the face $\Facc(\varphi)$ includes only a single distribution. 
By Proposition \ref{NotnullSC} and part (1) of Proposition \ref{pro:FaccPrimeFill} this distribution is a strongly contextual vertex. 
\end{proof}

\begin{defn}\label{def:PR}
{\rm
Consider a circle $C$ with $n$ non-degenerate $1$-simplices $\sigma_1,\cdots,\sigma_n$. The Popescu--Rohrlich (PR) box \cite{pr94} (see Figure {(\ref{fig:triangle-PR})}) is the simplicial  distribution 
$p \in \catsSet(C,D(
Dec^0 N\zz_2))$ such that  $p_{{\sigma}_i} \in \{p_+,p_-\}$  
 with the further restriction that the number of $p_-$'s is
odd, where 
\begin{equation}\label{eq:PR}
p_+^{ab}=\left\lbrace
\begin{array}{cc}
1/2 & b=0\\
0 & b =1\\
\end{array}
\right.
\;\;\;\;
p_-^{ab}=\left\lbrace
\begin{array}{cc}
0 & b =0\\
1/2 & b =1.\\
\end{array}
\right.
\end{equation} 
}
\end{defn}
\begin{cor}\label{ex:PRBox}
{\rm 
Each PR box gives a strongly   contextual vertex in $\catsSet(C, D(\Dec^0N\zz_2))$.
}
\end{cor}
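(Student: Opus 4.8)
The plan is to recognize Corollary \ref{ex:PRBox} as an immediate specialization of the machinery already assembled, rather than a computation to be performed from scratch. The key observation is that a PR box on a circle $C$ is precisely a distribution supported on the face $\Facc(\varphi)$ for a suitable non-null-homotopic map $\varphi:C\to N\zz_2$. So the first step is to identify which $\varphi$ the PR box corresponds to. Given the definition, $p_{\sigma_i}\in\set{p_+,p_-}$ with an odd number of $p_-$'s; reading off the deterministic part via the augmentation $d_0$, each $p_+$ collapses (under $D(d_0)$) to $\delta^{0}$ and each $p_-$ to $\delta^{1}$ on the corresponding edge. Hence $D(d_0)_\ast(p)=\delta^\varphi$ where $\varphi_{\sigma_i}\in\set{0,1}\subset\zz_2$ records the pattern of signs, which places $p$ in $\Facc(\varphi)$.

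Next I would compute $(C,\varphi|_C)$ for this $\varphi$ using Equation (\ref{eq:C varphi}). Since the group is $\zz_2$ (abelian, written additively here but treated multiplicatively in the general statements), inverses are trivial, so $(C,\varphi)=\sum_{i=1}^n \varphi_{\sigma_i}$, which equals the number of $p_-$'s modulo $2$. The odd-parity restriction in Definition \ref{def:PR} is exactly the condition that makes this sum equal to $1\neq 1_G$. By Proposition \ref{pro:NullHomCirc}, $\varphi$ is therefore \emph{not} null-homotopic. This is the conceptual heart of the argument: the parity condition in the PR box definition is precisely the non-triviality of the associated loop in $N\zz_2$.

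With $\varphi$ identified as non-null-homotopic, the conclusion follows by invoking Proposition \ref{pro:FacUnique} with $G=\zz_2$ (and $d=2$ prime). That proposition guarantees that $\Facc(\varphi)$ contains a unique simplicial distribution, which is a strongly contextual vertex of $\catsSet(C,D(\Dec^0 N\zz_2))$. Since the PR box lies in $\Facc(\varphi)$ and there is only one element there, the PR box \emph{is} that vertex, and it is strongly contextual. Alternatively, one could bypass uniqueness and argue directly: strong contextuality comes from Proposition \ref{NotnullSC} (every distribution in the face of a non-null-homotopic map is strongly contextual), and the vertex property from part (1) of Proposition \ref{pro:FaccPrimeFill} together with the fact that $\Facc(\varphi)\cong D(\zz_2/\varphi)$ is a point by Proposition \ref{pro:FaccCharac} (the action of $H_\varphi$ is transitive on $\zz_2$ once $(C,\varphi)=1$).

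The main obstacle, such as it is, is purely bookkeeping: one must carefully match the multiplicative conventions of the general nerve-space results (where $\zz_2$ is written as a multiplicative group with identity $1_G$) against the additive/outcome conventions of the PR-box definition, and verify that collapsing $p_\pm$ under $d_0$ yields the claimed deterministic edges. There is no genuine difficulty beyond confirming that ``odd number of $p_-$'s'' translates exactly to ``$(C,\varphi)\neq 1_G$.'' Once that dictionary is in place, the corollary is a one-line application of Proposition \ref{pro:FacUnique}.
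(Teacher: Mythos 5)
Your proposal is correct and follows essentially the same route as the paper: identify $D(d_0)_\ast(p)=\delta^\varphi$ with $\varphi$ not null-homotopic via the odd-parity condition and Proposition \ref{pro:NullHomCirc}, then apply Proposition \ref{pro:FacUnique}. Your version is only slightly more explicit than the paper's about why membership in the singleton $\Facc(\varphi)$ forces the PR box to be the strongly contextual vertex.
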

\begin{proof}
A simplicial set map $\varphi:C \to N\zz_2$ is not null-homotopic if and only if $\sum_{i=1}^n\varphi_{\sigma_i}$ is an odd number (see Proposition \ref{pro:NullHomCirc}). Therefore for a PR box $p$, the distribution $D(d_0)\circ p$ is a deterministic distribution $\delta^{\varphi}$ such that $\varphi$ is not null-homotopic. 
By Proposition \ref{pro:FacUnique} we obtain that $p$ is a strongly contextual vertex in $(C,\Dec^0(N\zz_2))$.
\end{proof}

\begin{pro}
{Let $G$ be an abelian group and $X$ be a connected $1$-skeletal simplicial set, and let $R$ be an integral semiring.} The subset of deterministic distributions in $\catsSet(X,D_R(\Dec^0 NG))$ is given by  $\displaystyle{\bigsqcup_{\varphi \sim \ast}V(\Facc(\varphi))}$. 
\end{pro}
\begin{proof}
The following diagram shows that every deterministic distribution in $\catsSet(X,{D_R(\Dec^0 NG)})$ lies in $\Facc(\varphi)$ for some null-homotopic map $\varphi$:
\begin{equation}
\begin{tikzcd}[column sep=huge,row sep=large]
\catsSet(X,\Dec^0(NG)) 
\arrow[rr,hook]
 \arrow[d,"(d_0)_\ast"] && \catsSet(X,D_R(\Dec^0 NG))  
 \arrow[d,"D_R(d_0)_\ast"] \\
\catsSet(X,NG) 
 \arrow[rr,hook] && \catsSet(X,D_R(NG)) 
\end{tikzcd}
\end{equation}
By Proposition \ref{pro:Pro 5.14}  each deterministic distribution on $(X,\Dec^0(NG))$ is a  vertex in $\catsSet(X,D_R
(\Dec^0NG))$. So far we have proved that the set of deterministic distributions in $\catsSet(X,D_R(\Dec^0NG))$ is included in $\displaystyle{\bigsqcup_{\varphi \sim \ast}V(\Facc(\varphi))}$. The number of deterministic distributions in $\catsSet(X,D_R(\Dec^0NG))$ is equal to $|G|^{|X_0|-1}\cdot |G|=|G|^{|X_0|}$. On the other hand, by Lemma \ref{lem:Kerofd0} and Corollary \ref{cor:NullHomFaccc} we {obtain} that $|\displaystyle{\bigsqcup_{\varphi \sim \ast}V(\Facc(\varphi))}|=|G|^{|X_0|}$. 
\end{proof}

\subsection{{Compositories and lifting properties}}


It is well-known that $NG$ and $\Dec^0(NG)$ are both fibrant simplicial sets (Kan complex). 
{We begin by observing} that {$D_R(NG)$} and {$D_R(\Dec^0(NG))$} are not fibrant.
This makes the homotopical study of simplicial distributions, i.e., simplicial set maps of the form $X\to D_R(Y)$ where $Y=NG$ or $\Dec^0(NG)$, harder. 
{However, we show that these simplicial sets are compositories in the sense of \cite{flori2013compositories}, hence satisfy lifting with respect to certain diagrams reflecting a kind of higher composition.}

We begin with the simplicial set $\Dec^0(NG)$. 
By Proposition \ref{pro:Pro 2.12} and Corollary \ref{cor:XNG} there is a contextual simplicial distribution $p:\partial \Delta[2] \to D(\Dec^0(NG))$. Consider a commutative diagram
$$
\begin{tikzcd}[column sep=huge,row sep=large]
\partial\Delta[2] \arrow[d,hook] \arrow[r,"p"] & D(\Dec^0(NG)) \arrow[d,"\simeq"] \\
\Delta[2] \arrow[r] \arrow[ru,dashed] & \ast
\end{tikzcd}
$$
where the right vertical arrow is a weak equivalence {(induces isomorphism on all homotopy groups)} by Proposition \ref{pro: DNGContr}.
If we assume that  $D(\Dec^0(NG))$ is fibrant then the diagonal dashed arrow exists making the diagram commute.
{Then Example \ref{ex:DeltaClass} and Proposition \ref{lem:Exten} implies that $p$ is non-contextual, which is a contradiction.}
For $D(NG)$ we observe that 
$$ 
\catsSet(\Lambda_{0}[3], D(NG))=\catsSet(\Delta[0]\ast \partial \Delta[2], D(NG))\cong \catsSet(\partial \Delta[2], D(\Dec^0(NG)))
$$ 
Let us write $p':\Lambda_0[3] \to D(NG)$ for the transpose of the contextual simplicial distribution $p:\partial \Delta[2] \to D(\Dec^0(NG))$. 
Then according to Proposition \ref{pro:Pro 2.23}
the distribution $p'$ is also contextual {and} by {Proposition} \ref{lem:Exten} the  distribution $p'$ does not extend to $\Delta[3]$. {Therefore} $D(NG)$ is not fibrant.

{Next, we observe that simplicial distributions on the nerve of a category is a compository. This observation will also be useful in Section \ref{sec:logical categories} when we introduce the category associated to a simplicial distribution.
We begin by recalling some basic definitions from \cite{flori2013compositories} about compositories and gleaves. We introduce two ordinal maps.
\begin{itemize}
\item The {\it $k$-source map} is given by the canonical inclusion
$$
S^k:[k] \to [n] 
$$
sending $i\mapsto i$.
\item The {\it $k$-target map} 
$$
T^k:[k]\to [m]
$$
is defined by $T^k(i)=i+m-k$. 
\end{itemize}
Let $X$ be a simplicial set.  A pair of simplices $(x,y)\in X_m\times X_n$ is $k$-composable if
$$
T_k(x)=S_k(y)
$$
where $T_k=(T^k)^*$ and $S_k=(S^k)^*$ are the induced maps on the simplices of $X$. A {\it compository} is a simplicial set $X$ equipped with a composition 
$$
x\circ_k y \in X_{m+n-k}
$$
for every $k$-composable pair $(x,y)\in X_m\times X_n$ and every $k\in \NN$ satisfying certain axioms \cite[Definition 2.2.2]{flori2013compositories}. 
{Our} canonical example is the nerve of a category.

{Compositories coincide with specific kinds of gleaves on the simplex category. Let $\catC$ be a category equipped with a system of bicoverings \cite[Defintion 4.2.1]{flori2013compositories} and $\catD$ be a category with pullbacks.}
A gleaf on a category $\catC$ with values in another category $\catD$ is a pair $(\Gamma,g)$ consisting of a functor $\Gamma:\catC^\op\to \catD$ together with a gluing operation
$$
g:\Gamma(A)\times_{\Gamma(A\times_C B)}\Gamma(B) \to \Gamma(C)
$$
for every bicovering}
$$
\begin{tikzcd}
 & B\arrow[d] \\
A \arrow[r] & C
\end{tikzcd}
$$ 
{The gluing operations satisfy certain axioms \cite[Definition 4.3.1]{flori2013compositories}.}

{Next, we introduce our gluing operation.
For a semifield $R$ and set maps $f:X\to Z$, $g:Y\to Z$,
the canonical map
$$
D_R(X\times_Z Y)\to D_R(X)\times_{D_R(Z)}D_R(Y)
$$
has a section $T:D_R(X)\times_{D_R(Z)}D_R(Y)\to D_R(X\times_Z Y)$ defined by} 
\begin{equation}\label{Eq:Tsection}
T(P,Q)(x,y)=\begin{cases}
\frac{P(x)Q(y)}{D_R(f)(P)(f(x))}  & \text{if} \;\;   D_R(f)(P)(f(x))\neq 0 \\
0 & \text{otherwise}
\end{cases}
\end{equation}
{where $(P,Q)\in D_R(X)\times_{D_R(Z)}D_R(Y)$ and 
$(x,y)\in X\times_Z Y$;
see \cite[Lemma 4.10]{kharoof2022simplicial}.
}
%
%
Now we prove some properties of that section.
\begin{lem}\label{pro:SectionProp}
Consider the following diagram in $\catSet$:
\begin{equation}\label{Eq:propofGlu}
\begin{tikzcd}[column sep=huge,row sep=large]
& & Y 
\arrow[d,""]
 \\
W  \arrow[r,"f"'] & X  \arrow[r,""'] & Z
\end{tikzcd}
\end{equation}
Given $(P,Q)\in D_R(W)\times_{D_R(Z)}D_R(Y)$ we have the following:
\begin{enumerate}
    \item $T\left(P,T(D_R(f)(P),Q)\right)=T(P,Q)$
    \item $D_R(r)(T(P,Q))=T\left(D_R(f)(P),Q\right)$
    \item $T\left(P,D_R(r)(T(P,Q))\right)=T(P,Q)$
\end{enumerate}
where $r$ is the  map 
{
$$
r: W\times_Z Y \to X\times_ZY
$$
induced by $f$ in Diagram (\ref{Eq:propofGlu}).}
\end{lem}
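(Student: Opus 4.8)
The plan is to prove the three identities by unwinding the explicit formula in Equation (\ref{Eq:Tsection}). Write $u:X\to Z$ and $v:Y\to Z$ for the two legs of the cospan in Diagram (\ref{Eq:propofGlu}), so that the composite $W\to Z$ is $u\circ f$ and the induced map is $r(w,y)=(f(w),y)$. The first task is bookkeeping: the outer applications of $T$ in parts (1) and (3) take $P\in D_R(W)$ together with a distribution on $X\times_Z Y$, and the relevant cospan is $W\xrightarrow{f}X\xleftarrow{\pi_X}X\times_Z Y$; the resulting distribution lives on the iterated pullback $W\times_X(X\times_Z Y)$, which I identify canonically with $W\times_Z Y$ via $(w,(f(w),y))\leftrightarrow(w,y)$. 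Under this identification both sides of each equation become distributions on $W\times_Z Y$ (for (1) and (3)) or on $X\times_Z Y$ (for (2)), so the equalities are meaningful. Before computing I check that every nested $T$ is defined, i.e.\ that the compatibility (pullback) conditions hold; for instance $D_R(\pi_X)\bigl(T(D_R(f)(P),Q)\bigr)=D_R(f)(P)$, which follows from the hypothesis $D_R(u\circ f)(P)=D_R(v)(Q)$.

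For part (2) I compute the pushforward directly. Since $r(w,y)=(f(w),y)$, summing over the fiber gives
$$
D_R(r)\bigl(T(P,Q)\bigr)(x,y)=\sum_{w\,:\,f(w)=x}\frac{P(w)\,Q(y)}{D_R(u\circ f)(P)\bigl(u(f(w))\bigr)}.
$$
The crucial point is that the denominator depends on $w$ only through $u(f(w))=u(x)$, hence is constant on the fiber $f^{-1}(x)$ and factors out of the sum; using $\sum_{f(w)=x}P(w)=D_R(f)(P)(x)$ together with $D_R(u\circ f)(P)=D_R(u)\bigl(D_R(f)(P)\bigr)$, the expression collapses to the formula for $T(D_R(f)(P),Q)(x,y)$, proving (2).

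For part (1) I substitute the explicit value of the inner distribution. Evaluating $T\bigl(P,T(D_R(f)(P),Q)\bigr)$ at $(w,y)$, i.e.\ at $(w,(f(w),y))$, produces a factor $T(D_R(f)(P),Q)(f(w),y)$ whose numerator carries $D_R(f)(P)(f(w))$; this cancels exactly against the normalizing denominator $D_R(f)(P)(f(w))$ coming from the outer $T$, leaving $P(w)Q(y)/D_R(u\circ f)(P)(u(f(w)))=T(P,Q)(w,y)$. Part (3) then requires no new computation: by part (2) I may replace $D_R(r)(T(P,Q))$ with $T(D_R(f)(P),Q)$, and the resulting expression $T\bigl(P,T(D_R(f)(P),Q)\bigr)$ equals $T(P,Q)$ by part (1).

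The only genuinely delicate point, and the step I expect to be the main obstacle, is the handling of the degenerate cases in which one of the normalizing denominators vanishes, where $T$ is $0$ by convention. Here I would argue that the supports match up: whenever the denominator in $T(P,Q)$ is nonzero, the intermediate denominators appearing in the computation are nonzero as well, so the cancellations above are legitimate, and whenever it vanishes both sides are forced to be $0$. This step uses that $R$ is a semifield, so that nonzero values are invertible, together with the compatibility of the supports of $P$, $Q$, and their pushforwards along $f$ and $r$.
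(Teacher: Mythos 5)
Your proposal is correct and follows essentially the same route as the paper's proof: part (2) by factoring the fiberwise-constant denominator out of the pushforward sum, part (1) by substituting the explicit formula and cancelling $D_R(f)(P)(f(w))$ against the outer normalization (using that $T$ is a section, so the marginal of the inner glued distribution is $D_R(f)(P)$), and part (3) as a formal consequence of (1) and (2). The only difference is that you explicitly flag the vanishing-denominator cases, which the paper's computation silently glosses over; your sketch of why they cause no trouble (zero-sum-freeness forces numerators to vanish whenever a denominator does) is the right resolution.
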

\begin{proof}
First, we complete Diagram (\ref{Eq:propofGlu}) by adding the pullbacks 
\begin{equation}\label{Eq:Two pullbacks}
\begin{tikzcd}[column sep=huge,row sep=large]
W\times_X (X\times_Z Y) \arrow[d,""] \arrow[r,"r"]& 
X\times_Z Y \arrow[r,""] \arrow[d,"r_1"]& Y 
\arrow[d,"r_2"]
 \\
W  \arrow[r,"f"'] & X  \arrow[r,""'] & Z
\end{tikzcd}
\end{equation}
{Part (1): For} $(w,(x,y))\in W\times_X (X\times_Z Y)$, we have 
$$
\begin{aligned}
T\left(P,T(D_R(f)(P),Q)\right)(w,(x,y))&=\frac{P(w)\cdot T(D_R(f)(P),Q)(x,y)}{D_R(r_1)(T(D_R(f)(P),Q))(x)}\\ 
&=
\frac{P(w)\cdot T(D_R(f)(P),Q)(x,y)}{D_R(f)(P)(x)} \\
&= \frac{P(w)\cdot \frac{D_R(f)(P)(x)Q(y)}{D_R(r_2)(Q)(r_2(y))}}{D_R(f)(P)(x)} \\
& =\frac{P(w)Q(y)}{D_R(r_2)(Q)(r_2(y))}  \\
& =T(P,Q)(w,y).
\end{aligned}
$$
{Part (2): For} $(x,y)\in X \times_{Z} Y$, we have 
$$
\begin{aligned}
D_R(r)(T(P,Q))(x,y)&=\sum_{w\in W \; : \; f(w)=x}
T(P,Q)(w,(x,y))=\\
& =\sum_{w\in W \; : \; f(w)=x}\frac{P(w)Q(y)}{D_R(r_2)(Q)(r_2(y))} \\
& =\frac{\left(\sum_{w\in W \; : \; f(w)=x}P(w) \right) Q(y)}{D_R(r_2)(Q)(r_2(y))} \\
& =\frac{D_R(f)(P)(x) Q(y)}{D_R(r_2)(Q)(r_2(y))} \\
& =T\left(D_R(f)(P),Q\right)(x,y).
\end{aligned}
$$
Part (3) follows directly from the first two parts.
\end{proof}

\begin{lemma}\label{lem:TProperty}
{Consider} the following commutative diagram in $\catSet$:
$$
\begin{tikzcd}[column sep=huge,row sep=large]
& Y  \arrow[r,"\beta"] 
\arrow[d,"g"'] & Y' \arrow[dl,"{g'}"]
 \\
X  \arrow[r,"f"] \arrow[d,"\alpha"]
& Z  \\
X' \arrow[ru,"{f'}"']
\end{tikzcd}
$$
{Then we have} the following commutative diagram:
$$
\begin{tikzcd}[column sep=huge,row sep=large]
D_R(X)\times_{D_R(Z)}D_R(Y) \arrow[r,"T"] \arrow[d,"D_R(\alpha)\times D_R(\beta)"']
& D_R(X\times_Z Y) \arrow[d,"D_R(\alpha \times \beta)"]
 \\
D_R({X'})\times_{D_R({Z})}D_R({Y'})    \arrow[r,"T"'] & 
D_R({X'}\times_{Z} {Y'})
\end{tikzcd}
$$
\end{lemma}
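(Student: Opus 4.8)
The plan is to establish commutativity of the square by evaluating both composites on an arbitrary $(P,Q)\in D_R(X)\times_{D_R(Z)}D_R(Y)$ and comparing the resulting distributions pointwise at an arbitrary $(x',y')\in X'\times_Z Y'$. Before doing so I would verify that the left vertical arrow is well-defined, i.e.\ that $(D_R(\alpha)(P),D_R(\beta)(Q))$ really lands in the pullback $D_R(X')\times_{D_R(Z)}D_R(Y')$. This is exactly where the commutativity of the given diagram is used: functoriality of $D_R$ together with $f=f'\circ\alpha$ and $g=g'\circ\beta$ yields $D_R(f')\circ D_R(\alpha)=D_R(f)$ and $D_R(g')\circ D_R(\beta)=D_R(g)$, so the relation $D_R(f)(P)=D_R(g)(Q)$ defining the top pullback propagates to $D_R(f')(D_R(\alpha)(P))=D_R(g')(D_R(\beta)(Q))$.

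For the comparison, fix $(x',y')$ and set $z=f'(x')=g'(y')\in Z$. I would record two bookkeeping facts. First, a pair $(x,y)$ with $\alpha(x)=x'$ and $\beta(y)=y'$ automatically satisfies $f(x)=f'(x')=z=g'(y')=g(y)$, hence lies in $X\times_Z Y$; thus the fiber of the restricted map $\alpha\times\beta\colon X\times_Z Y\to X'\times_Z Y'$ over $(x',y')$ is precisely $\{(x,y):\alpha(x)=x',\,\beta(y)=y'\}$. Second, for each such $(x,y)$ the denominator in $T(P,Q)(x,y)$ equals $D_R(f)(P)(f(x))=D_R(f)(P)(z)$, which is independent of the summand and, by the previous paragraph, equals the denominator $D_R(f')(D_R(\alpha)(P))(f'(x'))$ used by the lower copy of $T$.

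Assuming this common denominator is nonzero, expanding the top-then-right composite gives
$$
D_R(\alpha\times\beta)\big(T(P,Q)\big)(x',y')=\sum_{\substack{\alpha(x)=x'\\ \beta(y)=y'}}\frac{P(x)Q(y)}{D_R(f)(P)(z)}=\frac{\Big(\sum_{\alpha(x)=x'}P(x)\Big)\Big(\sum_{\beta(y)=y'}Q(y)\Big)}{D_R(f)(P)(z)},
$$
and since $\sum_{\alpha(x)=x'}P(x)=D_R(\alpha)(P)(x')$ and similarly for $\beta$, the right-hand side is exactly $T(D_R(\alpha)(P),D_R(\beta)(Q))(x',y')$, which is the down-then-right composite. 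The remaining case $D_R(f)(P)(z)=0$ is handled directly: the lower $T$ returns $0$ by definition, while on the upper route every summand $T(P,Q)(x,y)$ with $f(x)=z$ vanishes by definition of $T$, so its image under $D_R(\alpha\times\beta)$ is $0$ as well. The only genuinely delicate point I anticipate is this organization of fibers and denominators --- confirming that the fiber over the fibered product is described correctly and that the zero-denominator branch is treated consistently on both sides; once that is pinned down, the equality reduces to distributing a product of two finite sums.
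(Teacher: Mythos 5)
Your proposal is correct and follows essentially the same route as the paper's proof: evaluate both composites pointwise at $(x',y')$, identify the fiber of $\alpha\times\beta$ over $(x',y')$ with $\{(x,y):\alpha(x)=x',\ \beta(y)=y'\}$, observe that the denominators agree via $f=f'\circ\alpha$, and distribute the product of the two sums. Your explicit treatment of the well-definedness of the left vertical map and of the zero-denominator branch of $T$ is slightly more careful than the paper's computation, which leaves those points implicit, but the argument is the same.
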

\begin{proof}
Given  $(P,Q)\in D_R({X})\times_{D_R({Z})}D_R({Y})$ and $(x',y')\in {X'}\times_{Z} {Y'}$, we have 
\begin{equation}\label{eq:DRalphabeta}
\begin{aligned}
D_R(\alpha \times \beta)(T(P,Q))(x',y')&=
\sum_{(x,y)\in {X}\times_{Z} {Y} \,\,:\,\,
\alpha(x)=x',\, \beta(y)=y'} T(P,Q)(x,y)\\
&=\sum_{(x,y)\in {X}\times {Y} \,\,:\,\,
\alpha(x)=x',\, \beta(y)=y'} \frac{P(x)Q(y)}{D_R(f)(P)(f(x))}\\
&=\sum_{(x,y)\in {X}\times {Y} \,\,:\,\,
\alpha(x)=x',\, \beta(y)=y'} \frac{P(x)Q(y)}{D_R(f)(P)(f'(x'))}.
\end{aligned}
\end{equation}
In Equation (\ref{eq:DRalphabeta}) we used {the fact that} $\alpha(x)=x'$ {and} $\beta(y)=y'$ implies that 
$(x,y)\in X {\times_Z} Y$. We also have
$$
\begin{aligned}
T\left((D_R(\alpha)\times D_R(\beta))
(P,Q)\right)(x',y')&=
T\left(D_R(\alpha)(P),D_R(\beta)(Q)\right)(x',y') \\
&=\frac{D_R(\alpha)(P)(x')D_R(\beta)(Q)(y')}
{D_R(f')(D_R(\alpha)(P))(f'(x'))} \\
&=\frac{\sum_{x\in X \,\,:\,\,
\alpha(x)=x'}P(x) \sum_{ y\in Y \,\,:\,\,
\beta(y)=y'} Q(y) }
{D_R(f'\circ \alpha)(P)(f'(x'))} \\
&=
\sum_{(x,y)\in {X}\times {Y} \,\,:\,\,
\alpha(x)=x',\, \beta(y)=y'} \frac{P(x)Q(y)}{D_R(f)(P)(f'(x'))}.
\end{aligned}
$$

\end{proof}

\begin{pro}\label{pro:NCCompo}
Given a {small} category $\catC$ and  a semifield $R$, the simplicial set $D_R(N\catC)$ is a compository. 
\end{pro}
\begin{proof} 
Compositories 
are precisely the gleaves {on the simplex category} with bicoverings 
$$
\begin{tikzcd}[column sep=huge,row sep=large]
 & {[n]} \arrow[d,"T^n"] \\
{[m]} \arrow[r,"S^m"] & {[j]}
\end{tikzcd}
$$
where $n+m\geq j$; see \cite[Theorem 4.3.8.]{flori2013compositories}. 
%
First we explain {the} gluing operator. For the following pullback square 
\begin{equation}\label{eq:knm}
\begin{tikzcd}[column sep=huge,row sep=large]
[k] \arrow[r,"S^k"] \arrow[d,"T^k"']
& {[n]} 
\arrow[d,"T^n"]
 \\
{[m]}  \arrow[r,"S^m"'] & {[m+n-k]}
\end{tikzcd}
\end{equation}
we have the map 
\begin{equation}\label{eq:GluOrComp}
T:D_R(N\catC)_m\times_{D_R(N\catC)_k}D_R(N\catC)_n\to D_R(N\catC)_{m+n-k}
\end{equation}
{where w}e used  $D_R((N\catC)_m)=D_R(N\catC)_m$ and $(N\catC)_m\times_{(N\catC)_k} (N\catC)_n=(N\catC)_{m+n-k}$. {To show that $D_R(N\catC)$ is a gleaf we need to verify the three axioms: (a) identity axiom, (b) back-and-forth axiom, and (c) partial naturality axiom; {see \cite[Definition 4.3.1]{flori2013compositories}}.}

{(a) Identity axiom:} Given the pullback square 
$$
\begin{tikzcd}[column sep=huge,row sep=large]
{[n]} \arrow[d,"T^n"] \arrow[r,equal]& {[n]} 
\arrow[d,"T^n"]
 \\
{[j]}  \arrow[r,equal] & {[j]}
\end{tikzcd}
$$
we have the following commutative diagram 
$$
\begin{tikzcd}[column sep=large, row sep =large]
D_R(N\catC)_j \arrow[drr,bend left=15,""] \arrow[rd,"l"] 
\arrow[rdd,bend right=15,equal] &\\ 
&D_R(N\catC)_j\times_{D_R(N\catC)_n}D_R(N\catC)_n
\arrow[r,"\pi_2"]
 \arrow[d,"\pi_1"'] & D_R(N\catC)_n
 \arrow[d,""] \\
&D_R(N\catC)_j  
 \arrow[r] & D_R(N\catC)_n 
\end{tikzcd}
$$
The map $T:D_R(N\catC)_j\times_{D_R(N\catC)_n}D_R(N\catC)_n \to D_R(N\catC)_j$ is a section for the map $l$. Therefore 
$$
T=(\pi_1\circ l)\circ T= \pi_1 \circ (l \circ T)=\pi_1.
$$
Similarly, one can prove {the second case of} {$T=\pi_2$.}

{(b) Back-and-forth axiom:} Given the following pullback squares:
\begin{equation}\label{diag:PullsquaresDelt}
\begin{tikzcd}[column sep=huge,row sep=large]
{[k']} \arrow[d,"T^{k'}"] \arrow[r,"S^{k'}"]& 
{[k]} \arrow[r,"S^{k}"] \arrow[d,"T^k"]& {[n]}
\arrow[d,"T^n"]
 \\
{[m']}  \arrow[r,"S^{m'}"'] & {[m]}  \arrow[r,"S^m"'] & {[m+n-k]}
\end{tikzcd}
\end{equation}
we have to prove that the following diagram commutes:
\begin{equation}\label{Eq:TDiag}
\begin{tikzcd}[column sep=huge,row sep=large]
D_R(N\catC)_{m'}\times_{D_R(N\catC)_{k'}}D_R(N\catC)_n \arrow[r,"T"] \arrow[d,"T"']
& D_R(N\catC)_{m+n-k}
 \\
D_R(N\catC)_{m+n-k}   \arrow[r,""'] & D_R(N\catC)_m\times_{D_R(N\catC)_k}D_R(N\catC)_n \arrow[u,"T"]
\end{tikzcd}
\end{equation}
Diagram (\ref{diag:PullsquaresDelt}) induces the following diagram of pullbacks:
$$
\begin{tikzcd}[column sep=huge,row sep=large]
(N\catC)_{m+n-k} \arrow[d,""] \arrow[r,""]& 
(N\catC)_m \arrow[r,""] \arrow[d,""]& (N\catC)_{m'} 
\arrow[d,""]
 \\
(N\catC)_n  \arrow[r] & (N\catC)_k  \arrow[r,""'] & (N\catC)_{k'}
\end{tikzcd}
$$
So by part (3) of Lemma \ref{pro:SectionProp} we obtain that Diagram (\ref{Eq:TDiag}) commutes.

{(c) Partial naturality axiom: Follows from} Lemma \ref{lem:TProperty}.
\end{proof}

%
%

%
%
\begin{defn}\label{def:DeltaU}
{\rm
For a set $U$ let $\Delta_U$ denote the simplicial set whose $n$-simplices are given by the set $U^{n+1}$ and 
	the simplicial structure maps are given by
	$$
	\begin{aligned}
		d_i(x_0,x_1,\cdots,x_n) &=( x_0,x_1,\cdots,x_{i-1},x_{i+1},\cdots,  x_n) \\
		s_j(x_0,x_1,\cdots,x_n) &=( x_0,x_1,\cdots,x_{j-1},x_j,x_{j},x_{j+1},\cdots,  x_n).
	\end{aligned}
	$$
}	
\end{defn}	
For a group $G$, the space $\Delta_G$ is an alternative description of the d\'ecalage  $\Dec^0(NG)$.
\begin{lemma}\label{lem:DecDelta}
There is an isomorphism of simplicial sets
$$
\Delta_{G} \xrightarrow{\cong} \Dec^0(NG)
$$
defined in degree $n$ by sending $(g_0,g_1,\cdots,g_{n})$ to the tuple $(g_0,g_0^{-1}g_1,\cdots,g_{n-1}^{-1}g_n)$.  
\end{lemma}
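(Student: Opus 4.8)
The plan is to exhibit the stated assignment as a levelwise bijection that is compatible with every face and degeneracy operator; since a morphism of simplicial sets that is bijective in each degree is automatically an isomorphism, this will suffice. Write $\Phi_n \colon (\Delta_G)_n \to (\Dec^0 NG)_n$ for the map sending $(g_0,\dots,g_n)$ to $(g_0,\, g_0^{-1}g_1,\,\dots,\, g_{n-1}^{-1}g_n)$. The first step is to record the structure maps of $\Dec^0(NG)$ explicitly. Since $(\Dec^0 NG)_n = (NG)_{n+1} = G^{n+1}$ and the d\'ecalage forgets the zeroth face and degeneracy operators, we have $d_i^{\Dec^0 NG} = d_{i+1}^{NG}$ and $s_j^{\Dec^0 NG} = s_{j+1}^{NG}$. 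Spelled out on a tuple $(h_1,\dots,h_{n+1})$: the map $d_0$ returns $(h_1 h_2, h_3,\dots,h_{n+1})$, an interior $d_i$ (for $0<i<n$) multiplies the adjacent entries to give $(\dots,h_{i+1}h_{i+2},\dots)$, the top face $d_n$ deletes the last entry, and $s_j$ inserts a $1_G$ after the $(j+1)$-st entry.

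The second step is to produce the inverse in closed form. I would set $\Psi_n(h_1,\dots,h_{n+1}) = (h_1,\, h_1 h_2,\,\dots,\, h_1 h_2\cdots h_{n+1})$, the tuple of partial products. Writing $g_k = h_1\cdots h_{k+1}$ one gets $g_{k-1}^{-1} g_k = h_{k+1}$ by telescoping, so $\Psi_n$ and $\Phi_n$ are mutually inverse; in particular each $\Phi_n$ is a bijection.

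The heart of the argument is the third step: checking that $\Phi = \{\Phi_n\}$ is natural, i.e. that $\Phi_{n-1}\circ d_i^{\Delta_G} = d_i^{\Dec^0 NG}\circ \Phi_n$ and $\Phi_{n+1}\circ s_j^{\Delta_G} = s_j^{\Dec^0 NG}\circ \Phi_n$ for all $i,j$. The single identity driving everything is the telescoping relation $(g_{i-1}^{-1}g_i)(g_i^{-1}g_{i+1}) = g_{i-1}^{-1}g_{i+1}$. Concretely, deleting the coordinate $g_i$ in $\Delta_G$ replaces the two consecutive differences $g_{i-1}^{-1}g_i$ and $g_i^{-1}g_{i+1}$ by their product $g_{i-1}^{-1}g_{i+1}$, which is exactly the effect of the multiplying interior face map of $\Dec^0 NG$; the boundary cases $d_0$ (using $g_0\cdot g_0^{-1}g_1 = g_1$) and $d_n$ (which simply drops the last difference) match in the same manner. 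For the degeneracies, doubling $g_j$ in $\Delta_G$ inserts the difference $g_j^{-1}g_j = 1_G$ between $g_{j-1}^{-1}g_j$ and $g_j^{-1}g_{j+1}$, which is precisely the identity-insertion performed by $s_j^{\Dec^0 NG}=s_{j+1}^{NG}$. Treating the four families $d_0$, interior $d_i$, $d_n$, and $s_j$ separately completes the verification, and combined with Step~2 yields the claimed isomorphism.

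The main obstacle is purely bookkeeping: one must keep the d\'ecalage reindexing $d_i^{\Dec^0 NG} = d_{i+1}^{NG}$ straight while translating the multiplicative face maps of the nerve into the coordinate-deletion maps of $\Delta_G$, and one must isolate the boundary faces $d_0$ and $d_n$ from the interior ones. There is no conceptual difficulty beyond this indexing; the statement is essentially the classical identification $\Dec^0(NG)\cong EG$ recorded in the partial-difference coordinates of $\Delta_G$.
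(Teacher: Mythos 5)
Your proof is correct, and the paper in fact states this lemma without any proof, so there is nothing to compare it against; your argument is the standard verification one would expect. The explicit inverse via partial products and the case-by-case check of the faces $d_0$, interior $d_i$, $d_n$ and the degeneracies $s_j$ under the reindexing $d_i^{\Dec^0 NG}=d_{i+1}^{NG}$ are all accurate, and the telescoping identity $(g_{i-1}^{-1}g_i)(g_i^{-1}g_{i+1})=g_{i-1}^{-1}g_{i+1}$ is indeed the whole content of the compatibility check.
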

\begin{defn}\label{def:Czzd}
{For a set $U$} we define $\catC_{U}$ to be the category with objects $x \in U$ and a unique morphism, denoted by $(x,y):x\to y$, between any two objects. 
\end{defn}
Note that $\Delta_{U}=N \catC_{U}$, so by Proposition \ref{pro:NCCompo} we {obtain} the following:

\begin{corollary}\label{DRNGCompo}
Let $R$ be a semifield and let $G$ be a group. The simplicial sets $D_R(NG)$ and $D_R(\Delta_G)$ are compositories.  %
\end{corollary}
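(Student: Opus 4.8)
The plan is to reduce both assertions to the already-established Proposition \ref{pro:NCCompo}, which guarantees that $D_R(N\catC)$ is a compository for \emph{any} small category $\catC$ and any semifield $R$. Consequently the entire task amounts to exhibiting each of $NG$ and $\Delta_G$ as the nerve of a small category, after which the corollary is a direct specialization.

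First I would treat $D_R(NG)$. A group $G$ is regarded as a small category with a single object $\ast$ whose morphism set is $G$, composition being the group multiplication and the identity being $1_G$. Its nerve is precisely the simplicial set $NG$ used throughout this section, with $(NG)_n = G^n$ and the usual face maps given by multiplication of adjacent entries. Since this one-object category is small, Proposition \ref{pro:NCCompo} applies verbatim and yields that $D_R(NG)$ is a compository.

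Next I would treat $D_R(\Delta_G)$. Here I would invoke the identification recorded just before the statement, namely $\Delta_U = N\catC_U$ for the indiscrete category $\catC_U$ of Definition \ref{def:Czzd}, specialized to $U = G$. Because $\catC_G$ has object set $G$ and a unique morphism between any ordered pair of objects, it is a small category, so Proposition \ref{pro:NCCompo} again applies directly and gives that $D_R(\Delta_G) = D_R(N\catC_G)$ is a compository.

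The substance is carried entirely by Proposition \ref{pro:NCCompo}; there is no genuine obstacle beyond the two formal identifications, so this step is expected to be routine rather than hard. The only point demanding minor care is to confirm that the face and degeneracy maps of $NG$ (respectively $\Delta_G$) coincide, under these identifications, with those of the nerve of the corresponding category, so that the compository structure transported from $D_R(N\catC)$ is literally the one carried by $D_R(NG)$ (respectively $D_R(\Delta_G)$). For $NG$ this is the standard description of the nerve of a one-object category, and for $\Delta_G$ it is exactly the content of the remark $\Delta_U = N\catC_U$ together with the isomorphism of Lemma \ref{lem:DecDelta}.
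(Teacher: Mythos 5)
Your proposal is correct and follows exactly the paper's route: the paper deduces the corollary from Proposition \ref{pro:NCCompo} by noting that $NG$ is the nerve of the one-object category on $G$ and that $\Delta_G = N\catC_G$ (Definition \ref{def:Czzd}). The only difference is that you spell out the identifications slightly more explicitly than the paper does.
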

{
Note that the $k$-th order composition that $D_R(N\catC)$ is  equipped with given in (\ref{eq:GluOrComp})  implies that for $T^k: [k] \to [m]$ and $S^k: [k] \to [n]$, the following induced diagram admits a lifting for every simplicial distribution $p$:}
\begin{equation}\label{dia:OrderdLift}
\begin{tikzcd}[column sep=huge,row sep=large]
\Delta[m] \cup _{\Delta[k]} \Delta[n] \arrow[d,hook] \arrow[r,"p"] & D_R(N\catC) \\
\Delta[m+n-k]  \arrow[ru,dashed] 
\end{tikzcd}
\end{equation} 
For $D_R(NG)$ and $D_R(\Delta_{G})$ we will generalize this extension property using the fact that $NG$ and $\Delta_{G}$ are fibrant simplicial sets.

\begin{pro}\label{pro:Exten}
Let $Z$ be a simplicial subset of $X$.
\begin{enumerate}
\item Every non-contextual distribution
$p:Z \to D_R(\Delta_G)$  extends to $X$.
\item Assume that $Z \hookrightarrow X$ is a weak equivalence. Then
every non-contextual distribution $p:Z \to D_R(NG)$  extends to $X$.   
\end{enumerate}
\end{pro}
\begin{proof}
{By Proposition \ref{pro: DNGContr}} $\Delta_G \cong \Dec^0(NG)$ is a contractible fibrant space, 
so the induced map $\catsSet(X,\Delta_G) \to \catsSet(Z,\Delta_G )$ is surjective, which implies that $D_R(\catsSet(X,\Delta_G)) \to D_R(\catsSet(Z,\Delta_G))$ is also surjective. Then   {part (1) is obtained} by Diagram (\ref{diag:ImpDiag}) when $Y=\Delta_G$ and the map $f$ is the inclusion $Z \hookrightarrow X$.
The proof of part (2) is similar.
\end{proof}
\begin{cor} 
Let $R$ be a semifield, and  $Y$ denote $\Delta_G$ or $NG$. Given inclusions $[k] \hookrightarrow [m]$, $[k] \hookrightarrow [n]$, 
and  $\Delta[m] \cup _{\Delta[k]} \Delta[n] \hookrightarrow \Delta[j]$,
 the following diagram admits a lifting for every simplicial distribution $p$:
$$
\begin{tikzcd}[column sep=huge,row sep=large]
\Delta[m] \cup _{\Delta[k]} \Delta[n] \arrow[d,hook] \arrow[r,"p"] & D_R(Y)  \\
\Delta[j]  \arrow[ru,dashed] 
\end{tikzcd}
$$
\end{cor}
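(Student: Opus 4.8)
The plan is to derive this from Proposition \ref{pro:Exten}, applied to the monomorphism $Z\hookrightarrow\Delta[j]$, where I abbreviate $Z=\Delta[m]\cup_{\Delta[k]}\Delta[n]$. For this I must check two things: first, that every simplicial distribution $p\colon Z\to D_R(Y)$ is non-contextual; and second, only for the case $Y=NG$, that $Z\hookrightarrow\Delta[j]$ is a weak equivalence. Granting both, part (1) of Proposition \ref{pro:Exten} produces the desired extension when $Y=\Delta_G$ (where no weak-equivalence hypothesis is required), and part (2) produces it when $Y=NG$.

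The first point is where the real content lies. Since $Z$ is a pushout and $\catsSet(-,Y)$ sends colimits to limits, one obtains natural identifications $\catsSet(Z,Y)\cong Y_m\times_{Y_k}Y_n$ and $\catsSet(Z,D_R(Y))\cong D_R(Y_m)\times_{D_R(Y_k)}D_R(Y_n)$, where the structure maps are those induced by the inclusions $[k]\hookrightarrow[m]$ and $[k]\hookrightarrow[n]$. Under these identifications $\Theta_{Z,Y}$ becomes the canonical comparison map
$$
D_R(Y_m\times_{Y_k}Y_n)\longrightarrow D_R(Y_m)\times_{D_R(Y_k)}D_R(Y_n)
$$
that sends a distribution to the pair of its marginals. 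Because $R$ is a semifield, this canonical map has the section $T$ of Equation (\ref{Eq:Tsection}), so it is surjective; hence $\Theta_{Z,Y}$ is surjective and every $p\colon Z\to D_R(Y)$ lies in its image, i.e.\ is non-contextual. I would emphasise that this argument works for an arbitrary outcome space $Y$, the special form of $Y$ entering only at the extension step.

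For the second point I would show that $Z$ is weakly contractible: it is the pushout of the two cofibrations $\Delta[m]\hookleftarrow\Delta[k]\hookrightarrow\Delta[n]$ between contractible simplicial sets, hence a homotopy pushout of a diagram equivalent to $\ast\leftarrow\ast\to\ast$, which is contractible. As $\Delta[j]$ is contractible as well, the monomorphism $Z\hookrightarrow\Delta[j]$ induces isomorphisms on all homotopy groups and is therefore a weak equivalence (in fact a trivial cofibration). This supplies exactly the hypothesis needed to invoke part (2) of Proposition \ref{pro:Exten} in the $NG$ case.

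The main obstacle I anticipate is verifying carefully that the identifications of $\catsSet(Z,-)$ with the relevant fibre products are compatible with the comparison maps $\Theta$, so that $\Theta_{Z,Y}$ genuinely coincides with the canonical marginal map admitting $T$ as a section; this is the crux of the non-contextuality claim. Once that identification is secured the surjectivity is immediate, and the homotopical input required for the $NG$ case is routine given the contractibility of $Z$ and $\Delta[j]$ together with the fibrancy of $\Delta_G$ and $NG$ already recorded above.
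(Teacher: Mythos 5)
Your proof is correct and follows the same skeleton as the paper's: establish that every simplicial distribution on $Z=\Delta[m]\cup_{\Delta[k]}\Delta[n]$ is non-contextual, then invoke Proposition \ref{pro:Exten}. The one substantive difference is in how the non-contextuality is obtained: the paper simply cites \cite[Corollary 4.6]{okay2022simplicial}, whereas you reprove it on the spot by identifying $\catsSet(Z,Y)$ with $Y_m\times_{Y_k}Y_n$ and $\catsSet(Z,D_R(Y))$ with $D_R(Y_m)\times_{D_R(Y_k)}D_R(Y_n)$, recognizing $\Theta_{Z,Y}$ as the canonical marginal map, and using the section $T$ of Equation (\ref{Eq:Tsection}) (which is where the semifield hypothesis enters) to conclude surjectivity. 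This makes the argument self-contained within the paper's own toolkit and, as you note, works for arbitrary outcome spaces $Y$; the identification of $\Theta_{Z,Y}$ with the marginal map is routine by Yoneda and the universal property of the pushout, so there is no gap there. You also explicitly verify that $Z\hookrightarrow\Delta[j]$ is a weak equivalence (both sides being contractible, $Z$ as a homotopy pushout of contractible objects along cofibrations), a hypothesis needed for part (2) of Proposition \ref{pro:Exten} in the $NG$ case that the paper's proof leaves implicit. In short: same strategy, but your version trades an external citation for a short direct argument and is slightly more careful about the hypotheses of the extension result.
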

\begin{proof}
By \cite[Corollary 4.6]{okay2022simplicial} every simplicial distribution in $\catsSet(\Delta[m] \cup _{\Delta[k]} \Delta[n],D_R(Y))$ is non-contextual. 
{Therefore the result follows from}  Proposition \ref{pro:Exten}. 
\end{proof}

\section{Homotopical methods}
\label{sec:homotopical methods}
  
A basic question in the theory {of} simplicial distributions is to understand the role of homotopy in 
the study
of contextuality. In this section we bring in tools from homotopy theory to shed light on this question. 
  
\subsection{Collapsed   distributions}
\label{sec:collapsed distributions}



{The degeneracy map $s^i:[n]\to [n-1]$ induces a simplicial set map $s^i:\Delta[n]\to \Delta[n-1]$ 
defined
by $\sigma^{01\cdots n}\mapsto s_i(\sigma^{01\cdots(n-1)})$.}

\begin{defn}\label{def:CollMap}
{\rm
Let $X$ be a simplicial set and $x\in X_n$ be  an $n$-simplex. For $0\leq i \leq n-1$, we define $X^{s^i,x}$ by the following push-out diagram
%
%
\begin{equation}
\begin{tikzcd}[column sep =huge, row sep =large]
\Delta[n]
\arrow[r,"x"]
 \arrow[d,"s^i"'] & X  
 \arrow[d,"\pi^{s^i,x}"] \\
\Delta[n-1]
 \arrow[r] & X^{s^i,x} 
\end{tikzcd}
\end{equation}
The map $\pi^{s^i,x}$ will be called a {\it collapsing map}. When $s^i$ and $x$ {are understood from the context} 
we will write $\pi=\pi^{s^i,x}$ and $\bar X=X^{s^i,x}$.
}
\end{defn}
 
{\rm
{Note that} 
$\pi^{s^i,x}$ 
is surjective, {which} 
implies the injectivity of the induced map $(\pi^{x,s^i})^\ast: \catsSet(\bar{X},W) \to \catsSet(X,W)$ for every simplicial set $W$.
}
}
%

\begin{lem}\label{lem:collMap} 
A simplicial set map $\varphi:X\to W$ lies in the image $ (\pi^{x,s^i})^\ast = \pi^\ast: \catsSet(\bar{X},W) \to \catsSet(X,W)$ (factors through the collapsing map $\pi$) if and only if $\varphi_n(x) \in s_i(W_{n-1})$. 
\end{lem}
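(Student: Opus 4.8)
The plan is to characterize when $\varphi:X\to W$ factors through the collapsing map $\pi:X\to\bar X$ by exploiting the universal property of the pushout defining $\bar X$. Since $\bar X$ is constructed as the pushout of $\Delta[n-1]\xleftarrow{s^i}\Delta[n]\xrightarrow{x}X$, a map out of $\bar X$ is precisely a pair of compatible maps out of $\Delta[n-1]$ and $X$.

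First I would unwind the universal property. A simplicial set map $\bar\varphi:\bar X\to W$ with $\bar\varphi\circ\pi=\varphi$ corresponds, by the pushout, to the data of $\varphi:X\to W$ together with a map $\psi:\Delta[n-1]\to W$ such that the two composites into $W$ agree:
\begin{equation*}
\varphi\circ x=\psi\circ s^i:\Delta[n]\to W.
\end{equation*}
By the Yoneda-type correspondence $\catsSet(\Delta[m],W)\cong W_m$, the map $\psi:\Delta[n-1]\to W$ is the same as a simplex $w\in W_{n-1}$, and the map $\varphi\circ x:\Delta[n]\to W$ is the simplex $\varphi_n(x)\in W_n$. The composite $\psi\circ s^i$ corresponds under this identification to $s_i(w)$, since $s^i:\Delta[n]\to\Delta[n-1]$ sends the generating simplex to $s_i$ of the generator. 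Thus the compatibility condition becomes exactly $\varphi_n(x)=s_i(w)$ for some $w\in W_{n-1}$, i.e. $\varphi_n(x)\in s_i(W_{n-1})$.

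So the proof splits into two directions. For the forward direction, if $\varphi$ factors as $\bar\varphi\circ\pi$, then restricting along the bottom map $\Delta[n-1]\to\bar X$ of the pushout square produces the required $w\in W_{n-1}$ with $s_i(w)=\varphi_n(x)$. For the converse, given $w\in W_{n-1}$ with $\varphi_n(x)=s_i(w)$, I would define $\psi:\Delta[n-1]\to W$ to be the map classified by $w$ and check that $(\varphi,\psi)$ is a compatible cocone on the pushout diagram, hence induces the desired $\bar\varphi:\bar X\to W$; that $\bar\varphi\circ\pi=\varphi$ is immediate from the pushout construction.

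The only delicate point, and the step I expect to require the most care, is verifying the identity $\psi\circ s^i=$ ``the map classified by $s_i(w)$'' at the level of the standard simplices — that is, confirming that under the Yoneda identification the degeneracy $s^i:\Delta[n]\to\Delta[n-1]$ acting on the classifying map for $w$ yields precisely $s_i(w)\in W_n$. This is a direct consequence of the description of $s^i$ on the generating simplex $\sigma^{01\cdots n}\mapsto s_i(\sigma^{01\cdots(n-1)})$ recalled just before Definition \ref{def:CollMap}, together with naturality of the Yoneda bijection, so it is routine but is the crux of translating the pushout condition into the stated membership condition.
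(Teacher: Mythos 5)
Your proposal is correct and follows essentially the same route as the paper's proof: both reduce the factorization question to the pushout universal property, i.e., to the existence of a map $w:\Delta[n-1]\to W$ with $\varphi\circ x = w\circ s^i$, and then translate this via the Yoneda identification $\catsSet(\Delta[m],W)\cong W_m$ into the condition $\varphi_n(x)\in s_i(W_{n-1})$. The paper states this more tersely via its Diagram (\ref{dia:Ypsi}), but the content and the key step (identifying $w\circ s^i$ with $s_i(w)$) are identical.
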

\begin{proof}
The map $\varphi \in \Image \pi^\ast $ if and only if there is $\psi:\bar X \to W$ such that $\psi \circ \pi=\varphi$. This happens if and only if there {exists} $w:\Delta[n-1] \to W$ 
{making the following diagram commute}
\begin{equation}\label{dia:Ypsi}
\begin{tikzcd}[column sep=large, row sep =large]
\Delta[n]
\arrow[r,"x"]
 \arrow[d,"s^i"] & X
 \arrow[ddr,bend left=35,"\varphi"]
 \arrow[d,"\pi"] \\
\Delta[n-1] \arrow[rrd,bend right=30,"w"'] 
 \arrow[r] & \bar X \arrow[rd,"\psi"]\\
&& {W}
\end{tikzcd}
\end{equation}
{This}
is equivalent to {saying that} there {exists} $w \in W_{n-1}$ such that 
$\varphi_n(x)=s_i^W(w)$.
%
%
%
\end{proof}

\begin{cor}\label{cor:CollaDostCha} 
A simplicial distribution $p: X\to D_R(Y)$ lies in the image $(\pi^{x,s^i})^\ast: \catsSet(\bar{X},D_R(Y)) \to \catsSet(X,D_R(Y))$  
 if and only if $p_n(x)(y)=0$ for every $y \in  Y_n-\Image s^Y_i$.
\end{cor}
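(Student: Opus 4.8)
The plan is to deduce this corollary directly from Lemma~\ref{lem:collMap} by taking the target to be $W=D_R(Y)$, since a simplicial distribution $p:X\to D_R(Y)$ is exactly a simplicial set map with this target. Lemma~\ref{lem:collMap} then tells us that $p$ lies in the image of $(\pi^{x,s^i})^\ast$ if and only if $p_n(x)$ lies in the image of the degeneracy map $s_i$ of the simplicial set $D_R(Y)$ in degree $n$. The remaining work is to rewrite this membership condition as the vanishing statement appearing in the corollary.

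First I would unwind the simplicial structure on $D_R(Y)$. By construction $D_R(Y)_n=D_R(Y_n)$ and the degeneracy map in degree $n$ is $D_R(s^Y_i):D_R(Y_{n-1})\to D_R(Y_n)$. Hence the condition supplied by Lemma~\ref{lem:collMap} becomes $p_n(x)\in\Image D_R(s^Y_i)$, and the core of the argument is to identify this image inside $D_R(Y_n)$.

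The key point is that the degeneracy map $s^Y_i:Y_{n-1}\to Y_n$ is injective, since the simplicial identity $d_i s_i=\idy$ exhibits a face map as a left inverse. Therefore $(s^Y_i)^{-1}(y)$ is a single point when $y\in\Image s^Y_i$ and is empty otherwise. Using the formula $D_R(f)(q)(y)=\sum_{y'\in f^{-1}(y)}q(y')$ for the monad on morphisms, it follows at once that $D_R(s^Y_i)(q)(y)=0$ for every $y\in Y_n-\Image s^Y_i$ and every $q\in D_R(Y_{n-1})$; this gives the forward implication. For the converse I would take $P\in D_R(Y_n)$ that vanishes on $Y_n-\Image s^Y_i$ and define $q\in D_R(Y_{n-1})$ by $q(y')=P(s^Y_i(y'))$. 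Injectivity makes $q$ well defined, the identity $\sum_{y'}q(y')=\sum_{y\in\Image s^Y_i}P(y)=1$ holds because $P$ is supported on the image, and a direct computation gives $D_R(s^Y_i)(q)=P$. Specializing this characterization to $P=p_n(x)$ produces exactly the statement of the corollary.

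I expect the only genuine step to be the injectivity of the degeneracy map: this is what collapses each fiber $(s^Y_i)^{-1}(y)$ to at most one element and thereby identifies $\Image D_R(s^Y_i)$ with the set of distributions supported on $\Image s^Y_i$. Everything else is unwinding of definitions, so I do not anticipate a serious obstacle.
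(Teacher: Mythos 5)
Your proposal is correct and follows exactly the route the paper intends: the corollary is stated as an immediate consequence of Lemma~\ref{lem:collMap} applied with $W=D_R(Y)$, and the only content to supply is the identification of $\Image D_R(s^Y_i)$ with the distributions supported on $\Image s^Y_i$, which your injectivity argument handles correctly. No issues.
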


\begin{thm}\label{thm:CollapThm}
Let $\pi=\pi^{x,s^i}:X \to \bar X$ be a collapsing map. For a simplicial distribution 
$$p : \bar X\to D_R(Y)$$ 
we have the following:
\begin{enumerate}
\item $p$ is contextual if and only if  $\pi^\ast (p)$ is contextual.
\item $p$ is strongly contextual if and only if $\pi^\ast (p)$ is strongly contextual.

\item If the semiring $R$ is also  integral, then $p$ is  a vertex if and only if $\pi^\ast (p)$ is a vertex.
\item $p$ is a deterministic distribution if and only if $\pi^\ast (p)$ is a deterministic distribution.
\item If the semiring $R$ is also  integral, then $p$ is a contextual vertex if and only if $\pi^\ast (p)$ is a contextual vertex. 
\end{enumerate}
\end{thm}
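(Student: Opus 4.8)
The plan is to treat all five parts through one structural observation about the map $\pi^{\ast} : \catsSet(\bar X, D_R(Y)) \to \catsSet(X, D_R(Y))$. This map is injective (as noted after Definition \ref{def:CollMap}) and convex by Proposition \ref{pro:Pro 2.15}, and by Corollary \ref{cor:CollaDostCha} its image is exactly the convex subset
$$
I = \set{q \in \catsSet(X, D_R(Y)) : q_n(x)(y) = 0 \text{ for all } y \in Y_n \setminus \Image s^Y_i}.
$$
First I would record the single lemma that carries the integral cases: when $R$ is integral, $I$ is a prime filter. This is immediate from the structure-map formula (\ref{piForm}): if $\nu(P') \in I$ then for each $y \notin \Image s^Y_i$ the identity $\sum_{q'} P'(q')\, q'_n(x)(y) = 0$ combined with zero-sum-freeness forces every summand to vanish, and integrality then gives $q'_n(x)(y) = 0$ whenever $P'(q') \neq 0$; hence every element of the support of $P'$ lies in $I$. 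Everything else flows from this together with the injectivity and convexity of $\pi^{\ast}$.

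I would dispatch the two formal parts first. For part (4), the naturality identity $\pi^{\ast}(\delta^\psi) = \delta^{\psi \circ \pi}$ gives one implication, while if $\pi^{\ast}(p) = \delta^\varphi$ then $\delta^\varphi \in I$ forces $\varphi_n(x) \in \Image s^Y_i$, so Lemma \ref{lem:collMap} factors $\varphi = \psi \circ \pi$ and injectivity of $\pi^{\ast}$ yields $p = \delta^\psi$. For part (2), I would exhibit the bijection $\supp(p) \to \supp(\pi^{\ast}(p))$, $\psi \mapsto \psi \circ \pi$: surjectivity of $\pi$ makes it well-defined and injective, and any $\varphi \in \supp(\pi^{\ast}(p))$ has $(\pi^{\ast}(p))_n(x)(\varphi_n(x)) \neq 0$; since $\pi^{\ast}(p) \in I$ this forces $\varphi_n(x) \in \Image s^Y_i$, so $\varphi$ factors through $\pi$ by Lemma \ref{lem:collMap} and lies in the image of the map. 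Emptiness of one support is then equivalent to emptiness of the other.

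For part (3), the implication that $\pi^{\ast}(p)$ being a vertex makes $p$ a vertex uses only injectivity and convexity: a decomposition $p = \nu(P)$ pushes forward to $\pi^{\ast}(p) = \nu(D_R(\pi^{\ast})(P))$, which forces $D_R(\pi^{\ast})(P) = \delta^{\pi^{\ast}(p)}$ and hence $P = \delta^p$ by injectivity. The converse is where the prime-filter lemma enters: a decomposition $\pi^{\ast}(p) = \nu(P')$ has $P'$ supported in $I$, so $P' = D_R(\pi^{\ast})(P)$, and injectivity reduces the identity to $p = \nu(P)$, whence $P = \delta^p$. Part (5) is then simply the conjunction of parts (1) and (3).

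The main obstacle is the non-formal direction of part (1): that $\pi^{\ast}(p)$ non-contextual implies $p$ non-contextual (the reverse implication is immediate from Proposition \ref{lem:Exten}). Writing $\pi^{\ast}(p) = \Theta_{X,Y}(Q)$ and using $\Theta_{X,Y}(Q) \in I$, I would read off from the definition of $\Theta$ that $\sum_{\varphi : \varphi_n(x) = y} Q(\varphi) = 0$ for every $y \notin \Image s^Y_i$; zero-sum-freeness then shows that $Q$ is supported on maps factoring through $\pi$ (Lemma \ref{lem:collMap}), so $Q = D_R(\pi^{\ast})(\bar Q)$ for a unique $\bar Q$. The naturality square (\ref{diag:ImpDiag}) gives $\pi^{\ast}(\Theta_{\bar X, Y}(\bar Q)) = \Theta_{X,Y}(Q) = \pi^{\ast}(p)$, and injectivity of $\pi^{\ast}$ upgrades this to $p = \Theta_{\bar X, Y}(\bar Q)$, so $p$ is non-contextual. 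The delicate point, and the reason zero-sum-freeness and the image characterization are indispensable, is exactly this control over the support of the witnessing mixture $Q$.
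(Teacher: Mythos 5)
Your proposal is correct and follows essentially the same route as the paper: in every part the key mechanism is the characterization of $\Image\pi^{\ast}$ from Corollary \ref{cor:CollaDostCha}, zero-sum-freeness (plus integrality where needed) to force the witnessing mixture to be supported on that image, and then lifting along the injective convex map $\pi^{\ast}$ via the naturality square. Your packaging of the integral case as ``the image is a prime filter'' and your direct injectivity argument for the forward direction of part (3) are only cosmetic reorganizations of the paper's inline computations.
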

\begin{proof}
Part $(1)$: Lemma \ref{lem:Exten} implies that if $\pi^\ast (p)$ is contextual then $p$ is contextual. For the other direction, suppose that we have $d \in D_R(\catsSet(X,Y))$ such that $\Theta_{X,Y}(d)=\pi^\ast (p)$. Given 
$y \in Y_n-\Image{s_i^Y}$,  Corollary \ref{cor:CollaDostCha} implies that
$$
\Theta_{X,Y}(d)_n(x)(y)=\sum_{\varphi:\, \varphi_n(x)=y}d(\varphi)=(\pi^\ast (p))_n(x)(y)=0.
$$
Since $R$ is a zero-sum-free semiring we conclude that $d(\varphi)=0$ for every $\varphi$ with $\varphi_n(x) \notin \Image s_i^Y$.  Therefore according to Lemma \ref{lem:collMap} we obtain that $d(\varphi)=0$ for every $\varphi \notin \Image \pi^\ast$. We define $\tilde{d}\in D_R(\catsSet(\bar X,Y))$ to be
$$
\tilde{d}(\psi)=d(\pi^\ast(\psi)).
$$
Then $\tilde{d}$ is {a} well-defined distribution  since 
\begin{equation}\label{eq:Sum111}
\sum_{\psi \in \catsSet(\bar X,Y)} \tilde{d}(\psi)=\sum_{\psi \in \catsSet(\bar X,Y)}d(\pi^\ast(\psi))
=\sum_{\varphi \in \Image{\pi^\ast}} d(\varphi)=1.
\end{equation}
{In Equation (\ref{eq:Sum111}) we used the fact that $\pi^\ast$ is injective. We will use this fact again to {obtain}}
$$
D_R(\pi^\ast)(\tilde{d})(\varphi)=\sum_{\pi^\ast
(\psi)=\varphi}\tilde{d}(\psi)=\begin{cases}
d(\varphi)  &  \varphi \in \Image{\pi^\ast}  \\
0   &   \text{otherwise}.
\end{cases}
$$
Therefore $D_R(\pi^\ast)(\tilde{d})=d$. By Diagram (\ref{diag:ImpDiag}) when $f=\pi$, we have that 
$$
\pi^\ast(\Theta_{\bar X,Y}(\tilde{d}))=\Theta_{X,Y}(D_R(\pi^\ast)(\tilde{d}))=\Theta_{X,Y}(d)=\pi^\ast(p).
$$
By the injectivity of $\pi^\ast$ we obtain that $\Theta_{\bar X,Y}(\tilde{d})=p$.

Part (2): If $\pi^\ast (p)$ is strongly contextual, then $p$ is strongly contextual by  \cite[Lemma 5.19, part (1)]{kharoof2022simplicial}.
Now, suppose that $\varphi \in \supp(\pi^\ast(p))$. Then we have that $(\pi^\ast (p))_n(x)(\varphi_n(x))\neq 0$. 
By Corollary \ref{cor:CollaDostCha} we conclude that $\varphi_n(x) \in \Image s^Y_i$. 
By Lemma \ref{lem:collMap} we obtain that $\varphi=\psi \circ \pi$ for some $\psi \in \catsSet(\bar X,Y)$. Therefore, for every $\sigma \in X_m$ we have 
$$
p_m(\pi_m(\sigma))(\psi_m(\pi_m(\sigma))) \neq 0.
$$
Since $\pi$ is surjective we conclude that $\psi \in \supp(p)$.   

Part (3): {Suppose that $\pi^\ast(p)$ is a vertex.} According to Proposition \ref{pro:Pro 2.15} the map $\pi^\ast$ is $R$-convex map. So by part $(3)$ of Proposition \ref{pro:ElemConv} every vertex in the preimage of $\pi^\ast (p)$ under $\pi^\ast$
is a
vertex in $\catsSet(\bar X,D_R(Y))$. By the injectivity of $\pi^\ast$
this preimage contains only $p$, therefore $p$ is a vertex. For the converse, suppose that $p$ is a vertex in $\catsSet(\bar X,D_R(Y))$. Given $Q\in D_R(\catsSet(X,D_R(Y)))$ such that $\nu^{\catsSet(X,D_R(Y))}(Q)=\pi^\ast (p)$. 
Given 
$y \in Y_n-\Image s^Y_i$,  by corollary \ref{cor:CollaDostCha} and Equation (\ref{piForm}) 
we have the following:
$$
\sum_{q\in \catsSet(X,D_R(Y))} Q(q)q_n(x)(y)=(\pi^\ast(p))_n(x)(y)=0.
$$
Since $R$ is an integral, zero-sum-free semiring we have $q_n(x)(y)=0$ for every $q \in \catsSet(X,D_R(Y))$ with $Q(q)\neq 0$. Using Corollary \ref{cor:CollaDostCha} we conclude that every $q$ with $Q(q) \neq 0$ is in the image of $\pi^\ast$. So we can define $\tilde{Q} \in D_R(\catsSet( \bar X,D_R(Y)))$ by $
\tilde{Q}(q')=Q(\pi^\ast (q'))
$. Note that by the injectivity of $\pi^\ast$ we have
$$
D_R(\pi^\ast)(\tilde{Q})(q)=\sum_{\pi^\ast
(q')=q}\tilde{Q}(q')=\begin{cases}
Q(q)  &  q \in \Image{\pi^\ast}  \\
0   &   \text{otherwise.}
\end{cases}
$$
Therefore $D_R(\pi^\ast)(\tilde{Q})=Q$. Using the convexity of $\pi^\ast$ we obtain
$$
\pi^\ast\left(\nu^{\catsSet
(\bar X,D_R(Y))}(\tilde{Q})\right)=\nu^{\catsSet
(X,D_R(Y))}\left(D_R(\pi^\ast)(\tilde{Q})\right)=\nu^{\catsSet
(X,D_R(Y))}(Q)=\pi^\ast(p).
$$
Again, using the fact that $\pi^\ast$ is injective we {obtain} that 
$\nu^{\catsSet
(\bar X,D_R(Y))}(\tilde{Q})=p$. Because $p$ is a vertex we have  $\tilde{Q}=\delta^{p}$. Therefore 
$Q=D_R(\pi^\ast)(\tilde{Q})=D_R(\pi^\ast)(\delta^p)=\delta^{\pi^\ast (p)}$.

Part (4): It is easy to see that if $p$ is deterministic, then $\pi^\ast (p)$ is also deterministic. Now, suppose that $\pi^\ast (p)=\delta_Y \circ \varphi$ for some $\varphi \in \catsSet(X,Y)$. Then by Corollary \ref{cor:CollaDostCha} for every $y \in Y_n -\Image s_i^Y$ we have 
$$
\delta^{\varphi_n(x)}(y)=(\delta_Y\circ \varphi)_n(x)(y)=(\pi^\ast (p))_n(x)(y)=0.
$$
We conclude that $\varphi_n(x) \in \Image s_i^Y$. {Then} by Lemma \ref{lem:collMap} we {obtain} that $\varphi=\pi^\ast (\psi)$ for some $\psi \in \catsSet(\bar X,Y)$. Therefore we have 
$$
\pi^\ast (p)=\delta_Y \circ \varphi=\delta_Y \circ \pi^\ast (\psi) = \delta_Y \circ \psi \circ \pi = \pi^\ast(\delta_Y \circ \psi).
$$
The injectivity of $\pi^\ast$ implies that $p=\delta_Y \circ \psi$. 

Part $(5)$: Follows from parts (1) and (3).
\end{proof}
%

Theorem \ref{thm:CollapThm} generalizes  \cite[Theorem 3]{e25081127}. It is going to be used in the proof of {Proposition \ref{Pro:CharofLSC}.} For other applications of the collapsing method see \cite[Sections 5.1 and 5.2]{e25081127}.
%

\subsection{The associated category of a simplicial distribution}
\label{sec:logical categories}

For the rest of the paper we assume that 
$X$
is $1$-skeletal. 
 
%
%

%
Recall that a simplicial distribution $p:\Delta[1]\to D_R(\Delta_{\ZZ_d})$ is specified by a distribution $p_\sigma\in D_R(\ZZ_d^2)$ at the non-degenerate simplex $\sigma=\sigma^{01}$. For us it will be convenient, in this section and the next one, to regard this distribution as a $d\times d$-matrix $M$ where
$$
M_{a,b} = p_\sigma^{ab}.
$$
For convenience we index the rows and columns of such a matrix by $0,1,\cdots, d-1$.

\begin{rem}\label{rem:PRBox}
{\rm
A simplicial distribution $\Delta[1]\to D_R(\Delta_{\ZZ_d})$ can be regarded as a simplicial distribution $\Delta[1]\to D_R(\Dec^0(N\ZZ_d))$, and vice versa, using the isomorphism $\Delta_{\ZZ_d}\xrightarrow{\cong} \Dec^0(N\ZZ_d)$ of Lemma \ref{lem:DecDelta}, which  in degree $1$ is given by $(a,b)\mapsto (a,-a+b)$.   For example, the distributions $p_{+}$ and $p_{-}$ can be represented by the matrices
$
\begin{pmatrix} 
	\frac{1}{2} & 0 \\
	0 & \frac{1}{2}\\
	\end{pmatrix}$ and
$\begin{pmatrix} 
	0 & \frac{1}{2} \\
	\frac{1}{2} & 0 \\
	\end{pmatrix}
$, respectively. See Figure (\ref{fig:triangle-PR}) and Equation (\ref{eq:PR}).
}
\end{rem}

\begin{defn}\label{def:ComposDist}
{\rm 
{Let $R$ be a semifield, and} let 
$X=\Delta[1] \vee_{\Delta[0]} \Delta[1]$ with the non-degenerate $1$-simplices 
$\sigma,\tau \in X_1$ such that 
$$
d_1(\sigma)=x, \; 
d_0(\sigma)=d_1(\tau)=y, \; d_0(\tau)=z.
$$
For a simplicial distribution $p:X \to D_R(\Delta_{\zz_d})$ we define the composition 
$p_{\sigma}\circ p_{\tau}\in D_R(\zz_d\times \zz_d)$ to be 
$$
(p_{\sigma}\circ p_{\tau})^{ab}=
\begin{cases}
\sum_{c\in \zz_d}
\frac{p_{\sigma}^{ac}
p_{\tau}^{cb}}{p_{y}^c} & \text{if} \; p_{y}^{c}\neq 0 \\
0  & \text{if} \; p_{y}^{c}   = 0.
\end{cases}
$$ 
}
\end{defn}
\begin{rem}
{\rm 
{As we showed in Corollary \ref{DRNGCompo} the simplicial set $D_R(\Delta_{\zz_d})$ is a compository.} The pair $(p_{\sigma},p_{\tau})$ of Definition \ref{def:ComposDist} is $0$-composable, that is, $p_{\sigma}\circ_0 p_{\tau}$ exists. 
{O}ur composition $p_{\sigma}\circ p_{\tau}$ is actually equal to $D_R(d_1)(p_{\sigma}\circ_0 p_{\tau})$.
One can think about $p_{\sigma}\circ p_{\tau}$ as a simplicial distribution on an edge from $x$ to $z$ where by properties (a) and (d) of \cite[Definition 2.2.2]{flori2013compositories}  we have  
\begin{equation}\label{eq:RestDist}
(p_{\sigma}\circ p_{\tau})_x=p_x \; \text{and} \; (p_{\sigma}\circ p_{\tau})_z=p_z.
\end{equation}
See Figure (\ref{PicCopo}).
}
\end{rem}

\begin{figure}[h!]
\centering
\includegraphics[width=.28\linewidth]{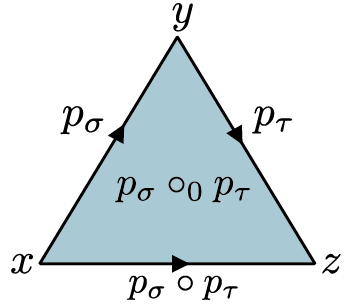} 
\caption{}
\label{PicCopo}
\end{figure} 
\begin{prop} 
{Let $R$ be a semifield} and let $X=\Delta[1] \vee_{\Delta[0]} \Delta[1]\vee_{\Delta[0]} \Delta[1]$ with the non-degenerate $1$-simplices 
$\sigma,\tau,\theta \in X_1$ 
satisfying
$$ 
d_0(\sigma)=d_1(\tau), \; d_0(\tau)=d_1(\theta).
$$
For a simplicial distribution $p:X \to D_R(\Delta_{\zz_d})$ we have
\begin{equation}\label{eq:Associa}
p_{\sigma} \circ (p_\tau \circ p_{\theta})=(p_{\sigma} \circ p_\tau) \circ p_{\theta}.  
\end{equation}
\end{prop}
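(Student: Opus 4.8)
The plan is to verify the associativity identity (\ref{eq:Associa}) by a direct computation, reducing both sides to a common symmetric double sum over the two intermediate outcomes. First I would fix notation: write $x,y,z,w$ for the vertices with $d_1(\sigma)=x$, $d_0(\sigma)=d_1(\tau)=y$, $d_0(\tau)=d_1(\theta)=z$, $d_0(\theta)=w$, and record the consistency conditions forced by $p$ being a simplicial distribution, namely $p_y^c=\sum_a p_\sigma^{ac}=\sum_b p_\tau^{cb}$ and $p_z^e=\sum_a p_\tau^{ae}=\sum_b p_\theta^{eb}$. Since $R$ is a semifield it is in particular zero-sum-free, so $p_y^c=0$ forces $p_\sigma^{ac}=0$ and $p_\tau^{cb}=0$ for all $a,b$, and likewise $p_z^e=0$ forces $p_\tau^{ae}=p_\theta^{eb}=0$. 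This lets me freely pass between a sum restricted to $\{c:p_y^c\neq 0\}$ (resp. $\{e:p_z^e\neq 0\}$) and the full sum whenever the summand carries a factor $p_\sigma^{ac}$ or $p_\tau^{cb}$ (resp. $p_\tau^{ae}$ or $p_\theta^{eb}$).

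The second step is to check that the inner compositions restrict correctly at the shared vertices, so that the outer composition is formed with the expected denominator. Concretely I would compute the marginal of $q:=p_\tau\circ p_\theta$ at $y$ and of $r:=p_\sigma\circ p_\tau$ at $z$. Summing over the free outcome and using $\sum_b p_\theta^{eb}=p_z^e$ (resp. $\sum_a p_\sigma^{ac}=p_y^c$) cancels the denominator and, after discarding the vanishing terms, yields $q_y^a=\sum_e p_\tau^{ae}=p_y^a$ and $r_z^b=\sum_c p_\tau^{cb}=p_z^b$. These are exactly the identities promised in (\ref{eq:RestDist}), and they guarantee that $p_\sigma\circ q$ and $r\circ p_\theta$ are legitimate compositions whose denominators are $p_y$ and $p_z$ respectively.

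Finally I would expand both sides. Substituting the definition of $q$ into $p_\sigma\circ q$ gives
$$
\bigl(p_\sigma\circ(p_\tau\circ p_\theta)\bigr)^{ab}=\sum_{\substack{c:\,p_y^c\neq 0\\ e:\,p_z^e\neq 0}}\frac{p_\sigma^{ac}\,p_\tau^{ce}\,p_\theta^{eb}}{p_y^c\,p_z^e},
$$
while substituting the definition of $r$ into $r\circ p_\theta$ produces the identical double sum with the two summations interchanged. Since both the index set $\{(c,e):p_y^c\neq 0,\ p_z^e\neq 0\}$ and the summand are symmetric in the roles of the two nested compositions, the two expressions coincide, which is precisely (\ref{eq:Associa}).

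I expect the only delicate point to be the bookkeeping of zero denominators: one must make sure that every term dropped when restricting the summation ranges has a vanishing numerator, which is exactly where zero-sum-freeness of the semifield $R$ enters; once this is settled, interchanging the two finite sums is routine. A more conceptual alternative would be to deduce associativity from the compository structure on $D_R(\Delta_{\zz_d})$ established in Corollary \ref{DRNGCompo}, together with the identity $p_\sigma\circ p_\tau=D_R(d_1)(p_\sigma\circ_0 p_\tau)$ relating $\circ$ to the $0$-order composition, but the direct calculation above is more transparent and self-contained.
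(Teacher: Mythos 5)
Your proof is correct, but it takes a different route from the paper's. The paper's own argument is essentially two lines: it invokes \cite[Proposition 2.2.8]{flori2013compositories} to get that the threefold compository composition $p_{\sigma}\circ_0 p_{\tau}\circ_0 p_{\theta}$ is a well-defined element of $D_R(\zz_d^4)$ (this is where associativity is really absorbed), and then observes that both sides of (\ref{eq:Associa}) equal $D_R(d_1\circ d_1)(p_{\sigma}\circ_0 p_{\tau}\circ_0 p_{\theta})$ --- i.e.\ exactly the ``conceptual alternative'' you mention in your last sentence. Your direct computation is self-contained and makes visible the one genuinely delicate point, namely that dropping the indices $c$ with $p_y^c=0$ (resp.\ $e$ with $p_z^e=0$) is harmless because zero-sum-freeness forces the corresponding numerators $p_\sigma^{ac}$, $p_\tau^{cb}$ (resp.\ $p_\tau^{ae}$, $p_\theta^{eb}$) to vanish; you also correctly verify via (\ref{eq:RestDist}) that the inner composites have the right marginals $p_y$ and $p_z$, so the outer compositions are formed with the intended denominators and both sides collapse to the same symmetric double sum $\sum_{c,e}p_\sigma^{ac}p_\tau^{ce}p_\theta^{eb}/(p_y^c p_z^e)$. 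What the paper's route buys is brevity and the fact that associativity is seen as an instance of a general structural property of compositories (already established for $D_R(N\catC)$ in Proposition \ref{pro:NCCompo}); what your route buys is independence from the external reference and an explicit check of the zero-denominator bookkeeping, which the compository formalism handles implicitly. Both are valid; just note that the standing convention of the paper (all semirings are zero-sum-free) is what licenses your use of that property for a semifield.
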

\begin{proof}
By \cite[Proposition 2.2.8]{flori2013compositories} the composition $p_{\sigma} \circ_0 p_\tau \circ_0 p_{\theta}$ is a well-defined distribution in $D_R(\zz_d^4)$. One can see that both sides of Equation (\ref{eq:Associa}) are equal to $ D_R(d_1{\circ d_1})(p_{\sigma} \circ_0 p_\tau \circ_0 p_{\theta})$.
\end{proof} 
\begin{defn}\label{def:C(X,p)}
{\rm
Let $R$ be a  semifield and $X$ be a $1$-skeletal simplicial set. Consider a simplicial distribution $p:X\to D_R(\Delta_{\ZZ_d})$.
A non-degenerate $1$-simplex $\sigma \in X_1^\circ$ 
can be regarded
as an edge from $d_1(\sigma)$ to $d_0(\sigma)$. To $\sigma$ we also associate an edge pointing in the opposite direction denoted by
$\sigma^T$, i.e., 
an edge from $d_0(\sigma)$ to $d_1(\sigma)$.
Consider the directed graph $(X_0,E(X))$ where $E(X)=\{\sigma,\sigma^T\,|\, \sigma \in X_1^\circ\}$ and the associated free category $\catC(X)$ on this directed graph.
We define a new  
category $\catC(X,p)$ as follows:
\begin{itemize}
\item objects are given by the set $X_0$ of $0$-simplices,
\item morphisms $q:x\to y$ are given by  compositions $q=q_1\circ q_2\circ \cdots \circ q_k$ of distributions for  each morphism in the free category $\catC(X)$ represented as a composition of $1$-simplices
$$
x=x_0 \xrightarrow{\tau_1} x_1 \xrightarrow{\tau_2} \cdots \xrightarrow{\tau_k} x_k=y
$$
and the corresponding distributions $q_i$ are defined as follows: 
$$
q_i=
\begin{cases}
p_{\sigma} &  \text{if} \; \tau_i=\sigma   \\
p_{\sigma}^T & \text{if} \; \tau_i=\sigma^T .
\end{cases} 
$$
\end{itemize}
}
\end{defn} 
\begin{lem}\label{lem:Impo}
Let $X$ be a $1$-skeletal simplicial set and $p:X\to D_R(\Delta_{\ZZ_d})$ be a simplicial distribution. For every $q \in \catC(X,p)(x,y)$ we have 
$$
D_R(d_1)(q)=p_x \;\text{ and }\; D_R(d_0)(q)=p_y.
$$
\end{lem}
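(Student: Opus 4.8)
The plan is to induct on the length $k$ of a path in the free category $\catC(X)$ (Definition \ref{def:C(X,p)}) representing the morphism
$$
x = x_0 \xrightarrow{\tau_1} x_1 \xrightarrow{\tau_2} \cdots \xrightarrow{\tau_k} x_k = y,
\qquad q = q_1 \circ q_2 \circ \cdots \circ q_k .
$$
The base case checks that each elementary block $p_\sigma$ or $p_\sigma^T$ already carries the asserted marginals, and the inductive step propagates this through the composition $\circ$ of Definition \ref{def:ComposDist}.

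For the base case, the identity morphism ($k=0$) is the degenerate edge $s_0(x)$, for which the simplicial identities $d_0 s_0 = d_1 s_0 = \mathrm{id}$ give $D_R(d_0)(p_{s_0(x)})=D_R(d_1)(p_{s_0(x)})=p_x$. When $k=1$ the block is $p_\sigma$ or $p_\sigma^T$. If $q_1=p_\sigma$, then $x=d_1(\sigma)$ and $y=d_0(\sigma)$; since $p$ is a map of simplicial sets it commutes with the face maps, so $D_R(d_1)(p_\sigma)=p_{d_1(\sigma)}=p_x$ and $D_R(d_0)(p_\sigma)=p_{d_0(\sigma)}=p_y$. If $q_1=p_\sigma^T$, then $x=d_0(\sigma)$ and $y=d_1(\sigma)$, and since $(p_\sigma^T)^{ab}=p_\sigma^{ba}$ the two face maps are interchanged, giving $D_R(d_1)(p_\sigma^T)=D_R(d_0)(p_\sigma)=p_{d_0(\sigma)}=p_x$ and $D_R(d_0)(p_\sigma^T)=D_R(d_1)(p_\sigma)=p_{d_1(\sigma)}=p_y$.

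For the inductive step, write $q=q_1\circ q'$ with $q_1\colon x\to x_1$ the first block and $q'=q_2\circ\cdots\circ q_k\colon x_1\to y$. By the base case $D_R(d_0)(q_1)=p_{x_1}$, and by the inductive hypothesis $D_R(d_1)(q')=p_{x_1}$ and $D_R(d_0)(q')=p_y$. Applying $D_R(d_0)$ to the composition formula of Definition \ref{def:ComposDist} and summing out the first index,
$$
D_R(d_0)(q_1\circ q')(b)=\sum_{c}\frac{(q')^{cb}}{p_{x_1}^c}\sum_{a}(q_1)^{ac}
=\sum_{c}\frac{(q')^{cb}}{p_{x_1}^c}\,p_{x_1}^c=\sum_{c}(q')^{cb}=p_y(b),
$$
where the indices $c$ with $p_{x_1}^c=0$ are absent from the composition and do not affect the final sum, since $D_R(d_1)(q')(c)=p_{x_1}^c=0$ forces $(q')^{cb}=0$ by zero-sum-freeness of $R$. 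The symmetric computation, now using $D_R(d_1)(q')(c)=p_{x_1}^c$ to cancel the denominator, gives $D_R(d_1)(q_1\circ q')(a)=\sum_{c}(q_1)^{ac}=D_R(d_1)(q_1)(a)=p_x(a)$, completing the induction.

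Conceptually the inductive step is just the statement that composition in $\catC(X,p)$ preserves the outer vertices, already recorded in Equation (\ref{eq:RestDist}) as a consequence of the compository axioms and Corollary \ref{DRNGCompo}; the display above only verifies that cancellation explicitly. Accordingly there is no deep obstacle: the points demanding care are the correct identification of source and target for the transpose blocks $p_\sigma^T$ in the base case, and the matching of the middle-vertex distribution $p_{x_1}$, which appears as $D_R(d_0)(q_1)$ on one side and as $D_R(d_1)(q')$ on the other and is precisely what makes the denominator $p_{x_1}^c$ cancel.
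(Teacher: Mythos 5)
Your proof is correct. It differs from the paper's in presentation rather than substance: the paper disposes of the lemma in one line by citing Equation (\ref{eq:RestDist}) (itself justified by the compository axioms of \cite[Definition 2.2.2]{flori2013compositories} via Corollary \ref{DRNGCompo}) together with the identity $D_R(d_0)(p_\sigma^T)=D_R(d_1)(p_\sigma)$, leaving both the induction over the path length and the actual cancellation implicit. You instead verify everything from the explicit formula of Definition \ref{def:ComposDist}: the base case is exactly the transpose identity the paper cites, and your inductive step is a hands-on proof of the marginal-preservation that Equation (\ref{eq:RestDist}) encodes. The one place where you supply a detail the paper glosses over entirely is the treatment of indices $c$ with $p_{x_1}^c=0$, which you correctly dispatch by noting that $D_R(d_1)(q')(c)=p_{x_1}^c=0$ forces each $(q')^{cb}=0$ by zero-sum-freeness of $R$ (and symmetrically $(q_1)^{ac}=0$ for the other marginal); this is needed for the truncated sum over $c$ to equal the full marginal, and making it explicit is a genuine improvement in rigor. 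Your identification of sources and targets for the transposed blocks, and of the identity morphism with the degenerate edge, is also consistent with Definition \ref{def:C(X,p)}. In short, you buy self-containedness at the cost of length, while the paper buys brevity by leaning on the compository framework already established.
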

\begin{proof} Follows from Equation (\ref{eq:RestDist}) and the fact that $D_R(d_0)(p_{\sigma}^T)=D_R(d_1)(p_{\sigma})$.
\end{proof}

Now, we will focus {to} the case {where} the semiring is the Boolean algebra $\BB=\set{0,1}$. In this case the composition in Definition \ref{def:ComposDist} is exactly the product of matrices. Let $\Pi :\RR_{\geq 0}\to \BB$ denote the semiring homomorphism defined by
$$
\Pi(x) = \left\lbrace
\begin{array}{cc}
1 & x>0 \\
0 & \text{otherwise.}
\end{array}
\right.
$$
For simplicial sets $X,Y$ we will consider the induced map
\begin{equation}\label{eq:PiRealToBool}
\Pi_*: \catsSet(X,D(Y)) \to \catsSet(X,D_{\BB}(Y))
\end{equation}
The{n the} following diagram commutes:
\begin{equation}\label{dia:PiPi}
\begin{tikzcd}[column sep=huge,row sep = large]
D(\zz_d\times \zz_d) \times_{D(\zz_d)} D(\zz_d\times \zz_d) \arrow[r,"\circ"] \arrow[d,"\Pi_*\times \Pi_*"] & D(\zz_d\times \zz_d) \arrow[d,"\Pi_*"]\\
D_{\BB}(\zz_d\times \zz_d) \times_{D_{\BB}(\zz_d)} D_{\BB}(\zz_d\times \zz_d) \arrow[r,"m"] & D_{\BB}(\zz_d\times \zz_d)
\end{tikzcd}
\end{equation}
%
%
%
where $m$ denotes the matrix multiplication. Therefore we have a functor
$$
\Pi_*: \catC(X,p) \to \catC(X,\Pi_*(p))
$$
%

\begin{ex}\label{ex:ABCDU}
{\rm
Let us consider $\catC(X,p)$ for a simplicial distribution $p:X\to D_\BB(\Delta_{\ZZ_2})$.
 We will use the following notation: 
$$ 
A=\begin{pmatrix} 
	1 & 1  \\
 1 & 0 \\
	\end{pmatrix} 
,\;\;	B=\begin{pmatrix} 
	1 & 1  \\
 0 & 1 \\
	\end{pmatrix} 
,\;\; 
D=\begin{pmatrix} 
	0 & 1  \\
 1 & 1 \\
	\end{pmatrix} 
,\;\;	
U=\begin{pmatrix} 
	1 & 1  \\
 1 & 1 \\
	\end{pmatrix}. 		
$$
The set $G=\{A,B,B^T,D,U\}$ is a semi-group with  the following rules:
\begin{enumerate}
\item $UM=MU=U$, $\forall M \in G$,
\item $U^T=U$, $A^T=A$, $D^T=D$,

\item $AA=DD=U$, $BB=B$,

\item $MM^T=M^TM=U$, $\forall M \in G$,

\item $AD=B$, $DA=B^T$,

\item $AB=B^TA=U$, $BA=AB^T=A$, $BD=DB^T=U$, $DB=B^TD=D$. 
\end{enumerate} 


\begin{figure}[h!]
\centering
\begin{subfigure}{.5\textwidth}
  \centering
  \includegraphics[width=.5\linewidth]{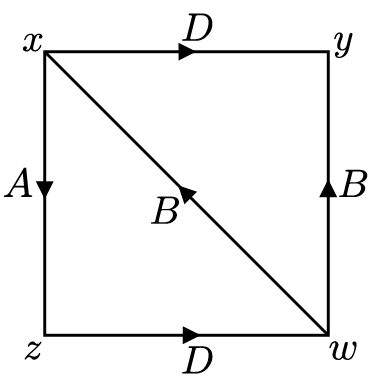}
  \caption{}
  \label{fig:CX}
\end{subfigure}%
\begin{subfigure}{.5\textwidth}
  \centering
  \includegraphics[width=.75\linewidth]{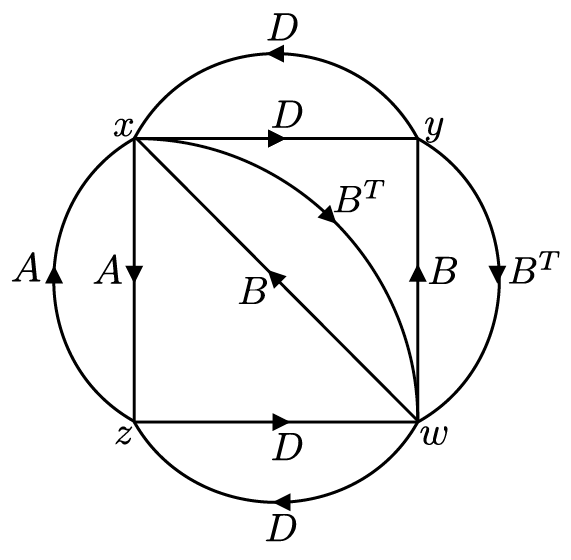}
  \caption{}
  \label{fig:CXp}
\end{subfigure}
\caption{
}
\label{fig:C}
\end{figure}

Let $p: X \to D_{\BB}(\Delta_{\zz_2})$ be the following simplicial distribution: 
$$
p_{\sigma_1}=A, \;\; p_{\sigma_2}=D, \;\;  p_{\sigma_3}=B, \;\;  p_{\sigma_4}=D, \;\;  p_{\sigma_5}=B   
$$
as described in Figure (\ref{fig:CX}).
We add the corresponding opposite 
edges and the transposed matrices for each distribution as in 
Figure (\ref{fig:CXp}).
%
By adding all the possible compositions we obtain
$$
\begin{aligned} 
&\catC(X,p)(x,y)=\{U,D,B\} &
&\catC(X,p)(x,z)=\{U,A,D\} \\
&\catC(X,p)(x,w)=\{U,B^T,B\} &
&\catC(X,p)(y,z)=\{U,D,B^T\} \\
&\catC(X,p)(y,w)=\{U,B^T,D\} &
&\catC(X,p)(z,w)=\{U,D,A\} \\
&\catC(X,p)(x,x)=\{I,U,B,B^T\} &
&\catC(X,p)(y,y)=\{I,U,D\}\\
&\catC(X,p)(z,z)=\{I,U,B,B^T\} &
&\catC(X,p)(w,w)=\{I,U,B,B^T\}.
\end{aligned}
$$
}
\end{ex}
%


\begin{defn}\label{def:logical category}
{\rm 
Let $d\geq 2$ be an integer, we will write $\Mat_d(\BB)$ to denote the monoid of $d\times d$ matrices under matrix multiplication. A small category $\catC$ is called a {\it logical category} if for every objects $x,y\in \Obj(\catC)$ the morphism set $\catC(x,y)$ is a subset of $\Mat_d(\BB)-\{0\}$
and the following diagram commutes
$$
\begin{tikzcd}[column sep=huge,row sep = large]
\catC(x,y) \times \catC(y,z) \arrow[r,"\circ"] \arrow[d,hook] & \catC(x,z) \arrow[d,hook]\\
\Mat_d(\BB) \times \Mat_d(\BB) \arrow[r,"m"] & \Mat_d(\BB)
\end{tikzcd}
$$ 
where $m$ denotes the matrix multiplication. A logical category is said to be {\it symmetric} if $M\in \catC(x,y)$ implies that the transposed matrix $M^T$ belongs to $\catC(y,x)$.
}
\end{defn}
For a simplicial distribution $p:X \to D_{\BB}(\Delta_{\zz_d})$ where $X$ is $1$-skeletal, the associated category $\catC(X,p)$ is a symmetric logical category. 
\begin{defn}\label{def:support of C(X,p)}
{\rm 
Let $F:\catC\to \catC_{\ZZ_d}$ be a functor, where $\catC$ is a logical category and {$\catC_{\zz_d}$ as in Definition \ref{def:Czzd}.} 
We say $F$ is in the {\it support} of $\catC$ if $M_{F(M)} =1 $ for every  $M\in \catC(x,y)$ and $x,y\in \Obj(\catC)$.
}
\end{defn}

%
%

%

Note that a functor $F:\catC \to \catC_{\zz_d}$ is determined by the values on the objects. 
Therefore there is a one-to-one correspondence between  functors $F:\catC(X,p) \to \catC_{\zz_d}$ and simplicial set maps $\varphi:X \to \Delta_{\zz_d}$. 

\begin{prop}\label{pr:DisSCCatSC}
Let $X$ be a $1$-skeletal simplicial set and $p:X\to D_\BB(\Delta_{\ZZ_d})$ be a simplicial distribution.
A simplicial  set map $\varphi:X\to \Delta_{\ZZ_d}$ is in the support of $p$ if and only if the corresponding functor $F_\varphi:\catC(X,p) \to \catC_{\zz_d}$ is in the support of $\catC(X,p)$.
\end{prop}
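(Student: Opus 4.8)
The plan is to unwind both notions of support and reduce the equivalence to a bookkeeping statement about Boolean matrix entries along paths. First I would make the correspondence $\varphi\leftrightarrow F_\varphi$ fully explicit. Since $X$ is $1$-skeletal and $\Delta_{\ZZ_d}=N\catC_{\ZZ_d}$, a simplicial set map $\varphi:X\to\Delta_{\ZZ_d}$ is determined by the vertex function $\varphi_0:X_0\to\ZZ_d$, with $\varphi_1(\sigma)=(\varphi_0(d_1\sigma),\varphi_0(d_0\sigma))$ on each edge; the functor $F_\varphi$ acts on objects by the same $\varphi_0$ and sends a morphism $q:x\to y$ to the unique morphism $(\varphi_0(x),\varphi_0(y))$ of $\catC_{\ZZ_d}$. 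Under the matrix description $M_{a,b}=p_\sigma^{ab}$, the condition $\varphi\in\supp(p)$ reads, on each generating (non-degenerate) $1$-simplex $\sigma$, as $(p_\sigma)_{\varphi_0(d_1\sigma),\varphi_0(d_0\sigma)}=1$ in $\BB$, whereas ``$F_\varphi$ is in the support of $\catC(X,p)$'' reads $M_{\varphi_0(x),\varphi_0(y)}=1$ for every morphism $M\in\catC(X,p)(x,y)$. I would also record that the $0$-simplex conditions of $\supp(p)$ are automatic for any vertex $v$ lying on an edge $\sigma$: compatibility with $D_\BB(d_i)$ gives $p_0(v)(\varphi_0(v))=\bigvee_a (p_\sigma)_{a,\varphi_0(v)}\geq (p_\sigma)_{\varphi_0(d_1\sigma),\varphi_0(d_0\sigma)}=1$, so that for $X$ without isolated vertices the support of $\varphi$ is governed entirely by the edge conditions.

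For the direction from the category to the map, I would simply specialize the support of $\catC(X,p)$ to the length-one morphisms $p_\sigma\in\catC(X,p)(d_1\sigma,d_0\sigma)$ attached to the generating edges. The defining equation $(p_\sigma)_{\varphi_0(d_1\sigma),\varphi_0(d_0\sigma)}=M_{F_\varphi(p_\sigma)}=1$ is exactly the support condition for $\varphi$ on $\sigma$, and by the previous paragraph the vertex conditions follow; hence $\varphi\in\supp(p)$.

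The converse is the substantive direction. Using the observation recorded after Lemma \ref{lem:Impo} that over $\BB$ the composition of Definition \ref{def:ComposDist} coincides with matrix multiplication, every $M\in\catC(X,p)(x,y)$ is a Boolean matrix product $M=q_1q_2\cdots q_k$ along a path $x=x_0\to x_1\to\cdots\to x_k=y$, where each $q_i$ is $p_\sigma$ or $p_\sigma^T$. The key observation is that $\varphi\in\supp(p)$, together with the transpose identity $(p_\sigma^T)_{a,b}=(p_\sigma)_{b,a}$, forces $(q_i)_{\varphi_0(x_{i-1}),\varphi_0(x_i)}=1$ for each link of the path: for a forward link this is the support condition on $\sigma$, and for a reverse link $\sigma^T$ it is the same condition read through the transpose. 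Then in the Boolean product $M_{\varphi_0(x_0),\varphi_0(x_k)}=\bigvee_{c_1,\dots,c_{k-1}}(q_1)_{\varphi_0(x_0),c_1}\cdots(q_k)_{c_{k-1},\varphi_0(x_k)}$ I would select the single term indexed by $c_i=\varphi_0(x_i)$, which equals $1$; hence $M_{\varphi_0(x),\varphi_0(y)}=M_{F_\varphi(M)}=1$ for every morphism $M$, i.e.\ $F_\varphi$ is in the support of $\catC(X,p)$.

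The main obstacle is orientation bookkeeping rather than anything conceptual: one must correctly match the orientation of a generating edge $\sigma$ (the morphism $d_1\sigma\to d_0\sigma$) and of its formal reverse $\sigma^T$ with the matrix-entry convention and the transpose, so that the support condition — stated only for the forward orientation of each generating edge — propagates to both $p_\sigma$ and $p_\sigma^T$ and hence to arbitrary composites. The one point genuinely requiring the hypothesis is that the composition in $\catC(X,p)$ is \emph{honest} matrix multiplication over $\BB$, which is what permits selecting the ``diagonal'' term $c_i=\varphi_0(x_i)$ of the Boolean product and which fails for the normalized composition over $\RR_{\geq 0}$.
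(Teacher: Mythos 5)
Your proposal is correct and follows essentially the same route as the paper's proof: the easy direction specializes the categorical support condition to the generating morphisms $p_\sigma$, and the converse propagates the condition $M_{F_\varphi(M)}=1$ from generators to transposes and then to arbitrary composites via the Boolean matrix fact that $A_{i,j}=1$ and $B_{j,k}=1$ imply $(AB)_{i,k}=1$. Your extra remarks (the vertex conditions being automatic, and the composition over $\BB$ being honest matrix multiplication) are correct bookkeeping that the paper handles implicitly or states elsewhere.
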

\begin{proof} For $\tau \in X_1^\circ$, let us denote $p_{\tau}$ by the matrix $M$. Recall that we have 
\begin{enumerate}
\item $M\in \catC(X,p)(d_1(\tau),d_0(\tau))
$,
\item $F_{\varphi}(M)=\varphi_{\tau}
$,
\item $M_{F_{\varphi}(M)}=
p_{\tau}(\varphi_{\tau})$.
\end{enumerate}
By (3) we conclude that if $F_{\varphi}$ is in the support of $\catC(X,p)$, then $\varphi \in \supp(p)$.

Now, suppose that $\varphi \in \supp(p)$.
Then $M_{F_{\varphi}(M)}=1$, and since
if 
$F_{\varphi}(M)=(a,b)$ then $F_{\varphi}(M^T)=(b,a)$, we obtain that $M^T_{F_{\varphi}(M^T)}=1$. Finally, using the fact that for $A,B \in \Mat_d(\BB)$, if $A_{i,j}=1$ and $B_{j,k}=1$ then $(AB)_{i,k}=1$, we obtain that $N_{F_{\varphi}(N)}=1$ for every morphism $N$ in the category $\catC(X,p)$.    
\end{proof}
\begin{ex}{\rm
The support of $p$ introduced in Example \ref{ex:ABCDU} is $\{\varphi_1,\varphi_2\}$ where
$$
\begin{aligned}
\varphi_1(x)=0,\;\varphi_1(y)=1,\;\varphi_1(z)=1,\,\varphi_1(w)=0,\\
\varphi_2(x)=1,\;\varphi_2(y)=1,\;\varphi_2(z)=0,\,\varphi_2(w)=1.
\end{aligned}
$$
These two maps determine two functors $F_{\varphi_1}:\catC(X,p) \to \catC_{\zz_2}$ and $F_{\varphi_2}:\catC(X,p) \to \catC_{\zz_2}$ such that
$\set{F_{\varphi_1},F_{\varphi_2}}$  
is the
support of $\catC(X,p)$. 
}
\end{ex}
%

%


In Section \ref{susect:simp} we defined the product of $\varphi \in \catsSet(X,\Delta_{\zz_d})$ with $p \in \catsSet(X,D_{\BB}(\Delta_{\zz_d}))$. Similarly, one can define the product of $\varphi$ with the category $\catC(X,p)$ to obtain a new category denoted by $\varphi\cdot \catC(X,p)$ with the same set of objects and whose morphisms are of the form
$$\delta^{(\varphi(x),\varphi(y))}\ast M \in \left(\varphi \cdot \catC(X,p)\right)(x,y)$$ 
where  $M \in \catC(X,p)(x,y)$.
\begin{pro}\label{pro:GroupactCat}
For $\varphi \in \catsSet(X,\Delta_{\zz_d})$ and $p \in \catsSet(X,D_{\BB}(\Delta_{\zz_d}))$, we have 
$$
\varphi \cdot \catC(X,p)=
\catC(X,\varphi\cdot p).
$$
\end{pro}
\begin{proof}
{Consider} non-degenerate $1$-simplices 
$\sigma,\tau \in X_1$ {satisfying}
$$
d_1(\sigma)=x, \; 
d_0(\sigma)=d_1(\tau)=y, \; d_0(\tau)=z.
$$
Let {us write $a=\varphi(x)$, $b=\varphi(y)$, and $c=\varphi(z)$. W}e need to prove that 
$$
(\delta^{(a,b)}\ast p_{\sigma})\circ
(\delta^{(b,c)}\ast p_{\tau})=\delta^{(a,c)}\ast (p_{\sigma}\circ p_{\tau}).
$$ 
Given $i,j \in \zz_d$ we have 
$$
\begin{aligned}
\left((\delta^{(a,b)}\ast p_{\sigma})\circ
(\delta^{(b,c)}\ast p_{\tau})\right)(i,j)&=\sum_{k=0}^{d-1}(\delta^{(a,b)}\ast p_{\sigma})(i,k)
(\delta^{(b,c)}\ast p_{\tau})(k,j)\\
&=\sum_{k=0}^{d-1} p_{\sigma}(i-a,k-b)p_{\tau}(k-b,j-c)\\
&=(p_{\sigma}\circ p_{\tau})(i-a,j-c)\\
&=(\delta^{(a,c)}\ast (p_{\sigma}\circ p_{\tau}))(i,j).
\end{aligned}
$$
In addition, we have $\delta^{(b,a)}\ast M^T=(\delta^{(a,b)}\ast M)^T$.
\end{proof}

\subsection{Strong contextuality for binary outcomes}
\label{sec:strong contextuality for binary outcomes}

In this section we determine when a simplicial distribution $p:X \to D_{\BB}(\Delta_{\zz_2})$ is strongly contextual in terms of the associated category $\catC(X,p)$ and 
give a homotopical characterization  {of}
strongly contextual simplicial distributions $p:X \to D_{}(\Delta_{\zz_2})$.
%
\begin{pro}\label{pro:SCCircle}
Let $C$ be a circle (Definition \ref{def:circle}) and let $C^{(1)}$ be a circle with a single edge $\tau$. For a simplicial distribution $p: C \to D_{\BB}(\Delta_{\zz_d})$ we define $p':C^{(1)} \to D_{\BB}(\Delta_{\zz_d})$ 
by
$$p'_{\tau}=M_1M_2\cdots M_n$$ 
where
$$
M_j=
\begin{cases}
p_{\sigma_j} &  \text{ if } \; i_j=1  \\
p_{\sigma_j}^T & \text{ if } \; i_j=0. 
\end{cases} 
$$
The simplicial distribution $p$ is strongly contextual if and only if $p'$ is strongly contextual. 
\end{pro}
\begin{proof}
The result follows by induction on $n$. For $n=1$ the result is immediate, so assume $n>1$.
Let $Z$ denote the simplicial subset of $C$  generated by $\sigma_1,\sigma_2$.
Let us write $M$ and $N$ for $M_1$ and $M_2$, respectively. 
We replace the simplicial subset $Z\subset C$ by a single edge $\tau$ 
with $d_1(\tau)=d_{i_1}(\sigma_1)$ and $d_0(\tau)=d_{i'_2}(\sigma_2)$, and denote the resulting circle by $C'$. In addition, we define $q:C' \to D_{\BB}(\Delta_{\zz_d})$ on the non-degenerate edges by 
$$
q_{\sigma}=
\begin{cases}
M  N &  \text{ if } \; \sigma=\tau  \\
p_{\sigma} & \text{otherwise.}  
\end{cases} 
$$ 
The support of $p|_{Z}$ is not empty if and only if there exists  $a,b,c\in \zz_d$ such that $M_{a,b}=1$ and $N_{b,c}=1$. 
This is equivalent to saying that there exists $a,c \in \zz_d$ such that $(M N)_{a,c}=1$.
{Therefore} $p$ is strongly contextual if and only if $q$ is strongly contextual. By induction $q$ is strongly contextual if and only if the corresponding $q':C^{(1)} \to D_{\BB}(\Delta_{\zz_d})$ is strongly contextual. Note that $q'=p'$.

%
%
\end{proof}

\begin{cor}\label{cor:LogicalPRbox}
Let $C$ be a circle. A  simplicial distribution $p:C \to D_{\BB}(\Delta_{\zz_2})$ is strongly contextual if and only if $p_{\sigma_j}\in \set{
\begin{pmatrix} 
	1 & 0 \\
	0 & 1\\
	\end{pmatrix}, \begin{pmatrix} 
	0 & 1 \\
	1 & 0 \\
	\end{pmatrix}
}$ for every $1\leq j \leq n$, such that the number of $\begin{pmatrix} 
	0 & 1 \\
	1 & 0 \\
	\end{pmatrix}$ is odd.
\end{cor}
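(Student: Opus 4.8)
The plan is to combine the reduction to a single-edge circle from Proposition~\ref{pro:SCCircle} with a short analysis of Boolean $2\times2$ matrices. First I would apply Proposition~\ref{pro:SCCircle}: $p$ is strongly contextual if and only if the distribution $p'$ on the one-edge circle $C^{(1)}$ is, where $p'_\tau=M_1\cdots M_n$ and $M_j=p_{\sigma_j}$ or $p_{\sigma_j}^T$ according to the orientation $i_j$. On a one-edge circle a map $\varphi:C^{(1)}\to\Delta_{\zz_2}$ must send the loop $\tau$ to a simplex of the form $(a,a)$, so $\varphi\in\supp(p')$ exactly when the diagonal entry $(p'_\tau)_{a,a}=1$. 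Hence $p'$ is strongly contextual iff $p'_\tau$ has vanishing diagonal. Since $p'_\tau$ is a genuine loop distribution its row- and column-supports coincide ($D_\BB(d_0)(p'_\tau)=D_\BB(d_1)(p'_\tau)$), and the only nonzero $2\times2$ Boolean matrix with zero diagonal and equal row/column support is the swap $S=\begin{pmatrix}0&1\\1&0\end{pmatrix}$ (the candidates $\begin{pmatrix}0&1\\0&0\end{pmatrix}$ and $\begin{pmatrix}0&0\\1&0\end{pmatrix}$ fail the support condition). Thus strong contextuality of $p$ is equivalent to $M_1\cdots M_n=S$.

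It then remains to show that $M_1\cdots M_n=S$ holds if and only if every $M_j\in\{I,S\}$ with an odd number of them equal to $S$. Since $\{I,S\}$ is closed under transposition and transposition fixes both matrices, this is the same statement for the $p_{\sigma_j}$ with the stated parity. The converse is immediate: a product of $I$'s and $S$'s equals $S$ precisely when an odd number of swaps occur, because $S^2=I$. For the forward direction I would first argue that every vertex distribution $p_{v_j}$ has full support. If some $p_{v_j}$ were supported on a single value, then $M_j$ would have a zero column and $M_{j+1}$ a zero row, forcing every $1$-entry of $M_1\cdots M_n$ to route through that one intermediate value; the product would then be a ``rank-one'' relation $L\times R$. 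But $S=\{(0,1),(1,0)\}$ is not of this form, a contradiction. Hence all $p_{v_j}$ equal the uniform distribution $u=(1,1)$, so each $M_j$ has full row and column support and therefore lies in $\{I,S,A,B,B^T,D,U\}$.

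Finally I would exploit the semigroup structure recorded in Example~\ref{ex:ABCDU}. The permutation matrices $\{I,S\}$ form the unit group of the monoid of full-support matrices, while $G=\{A,B,B^T,D,U\}$ is closed under multiplication by Example~\ref{ex:ABCDU}; moreover multiplying by a permutation only permutes rows or columns and so preserves the number of $1$'s, whence $\{I,S\}\cdot G\subseteq G$ and $G\cdot\{I,S\}\subseteq G$. Thus $G$ is a two-sided ideal: any product containing at least one factor from $G$ lands in $G$. Since $G$ contains no permutation matrix and $S\notin G$, the equality $M_1\cdots M_n=S$ forces every $M_j\in\{I,S\}$, and the parity statement follows as in the converse.

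The main obstacle is exactly this forward direction, i.e.\ excluding the non-permutation matrices. The two crucial inputs are that the target $S$ is a non-factorable (rank-one-free) relation, which rules out any degenerate intermediate vertex, and that the non-invertible full-support matrices form an absorbing ideal $G$, which prevents any of them from appearing in a product equal to $S$.
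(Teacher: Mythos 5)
Your proof is correct, and its overall skeleton matches the paper's: both arguments reduce to the one-edge circle via Proposition \ref{pro:SCCircle}, and both identify $S=\left(\begin{smallmatrix}0&1\\1&0\end{smallmatrix}\right)$ as the unique strongly contextual distribution there (your justification via the vanishing diagonal and the matching row/column supports is the detail the paper leaves implicit). Where you genuinely diverge is in the combinatorial core, namely showing that $M_1\cdots M_n=S$ forces every factor into $\{I,S\}$. The paper does this with a single two-factor factorization lemma valid for \emph{arbitrary} Boolean $2\times2$ matrices --- $XY=S$ iff $\{X,Y\}=\{I,S\}$, and $XY=I$ iff $X=Y\in\{I,S\}$ --- and then peels off one factor at a time. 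You instead first exclude degenerate intermediate vertices (using that $S$ is not a product relation $L\times R$), restrict to the seven full-support matrices, and then invoke the absorbing-ideal property of $G=\{A,B,B^T,D,U\}$ from Example \ref{ex:ABCDU}. Your route is a bit longer because of the preliminary full-support reduction, which the paper's factorization lemma renders unnecessary (it already covers matrices with zero rows or columns), but it buys a structural explanation --- the non-permutation full-support matrices form a two-sided ideal, so no product containing one can ever return to a permutation --- and it reuses the semigroup table the paper has already recorded. Both arguments are complete and correct.
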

\begin{proof}The only strongly contextual distribution $q\in \catsSet(C^{(1)},D_{\BB}(\Delta_{\zz_2}))$ is the one with 
$$
q_{\tau}=\begin{pmatrix} 
	0 & 1 \\
	1 & 0 \\
	\end{pmatrix}.
$$  
In addition, for   $X,Y\in \Mat_2(\BB)$ we have
\begin{itemize}
\item $X  Y=\begin{pmatrix} 
	0 & 1 \\
	1 & 0 \\
	\end{pmatrix}$ if and only if $\set{X,Y}=\set{\begin{pmatrix} 
	1 & 0 \\
	0 & 1 \\
	\end{pmatrix},\begin{pmatrix} 
	0 & 1 \\
	1 & 0 \\
	\end{pmatrix}}$,
\item $X  Y=\begin{pmatrix} 
	1 & 0 \\
	0 & 1 \\
	\end{pmatrix}$ if and only if $X=Y=\begin{pmatrix} 
	1 & 0 \\
	0 & 1 \\
	\end{pmatrix}$ or $X=Y=\begin{pmatrix} 
	0 & 1 \\
	1 & 0 \\
	\end{pmatrix}$.	
\end{itemize}
{The result {follows from} Proposition \ref{pro:SCCircle}.}

\end{proof}

\begin{lem}\label{lem:NonEmptSup}
Let $\catC$ be a symmetric logical category such that
\begin{itemize}
\item $\catC(x,x)=\{I,U,B,B^T\}$ for every  $x\in \Obj(\catC)$ {(see Example \ref{ex:ABCDU})},
\item $\catC(x,y)=\{U,A,D\}$ or $\{U,B,B^T\}$ for every distinct $x,y\in \Obj(\catC)$.
\end{itemize}
Then $\catC$ has non-empty support.
\end{lem}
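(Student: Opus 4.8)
The plan is to translate the existence of a support functor into a $2$-colouring problem on $\Obj(\catC)$ and then solve it using the multiplication rules of Example \ref{ex:ABCDU}. A functor $F:\catC\to\catC_{\zz_2}$ is determined by its values on objects, so it amounts to a function $f:\Obj(\catC)\to\{0,1\}$, and $F$ lies in the support precisely when $M_{f(x),f(y)}=1$ for every $M\in\catC(x,y)$ and all objects $x,y$. First I would read off the constraint imposed by each hom-set. The endomorphisms impose nothing: each of $I,U,B,B^T$ has both diagonal entries equal to $1$, so $M_{f(x),f(x)}=1$ automatically. For distinct $x,y$ there are two cases. If $\catC(x,y)=\{U,A,D\}$, then since $A_{1,1}=0$ and $D_{0,0}=0$ the requirement $A_{f(x),f(y)}=D_{f(x),f(y)}=1$ rules out $(f(x),f(y))\in\{(1,1),(0,0)\}$, forcing $f(x)\neq f(y)$. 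If $\catC(x,y)=\{U,B,B^T\}$, then since $B_{1,0}=0$ and $(B^T)_{0,1}=0$ we are forced to have $f(x)=f(y)$. Thus a support functor is exactly a colouring $f$ in which hom-sets of the first type join differently-coloured objects and hom-sets of the second type join equally-coloured objects.

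Next I would encode the two types by a symmetric function $t:\Obj(\catC)\times\Obj(\catC)\to\zz_2$, setting $t(x,x)=0$, $t(x,y)=1$ when $\catC(x,y)=\{U,A,D\}$, and $t(x,y)=0$ when $\catC(x,y)=\{U,B,B^T\}$; symmetry $t(x,y)=t(y,x)$ holds because $\catC$ is symmetric and the sets $\{U,A,D\}$ and $\{U,B,B^T\}$ are each closed under transpose. The crucial point is that closure of composition makes $t$ additive: for all $x,y,z$,
$$
t(x,z)=t(x,y)+t(y,z)\qquad\text{in }\zz_2.
$$
This is where the multiplication table does the work. Composing two type-$\{U,A,D\}$ hom-sets gives $AD=B$ and $DA=B^T$, so $\catC(x,z)$ contains $B,B^T$ and must equal $\{U,B,B^T\}$; composing $\{U,A,D\}$ with $\{U,B,B^T\}$ gives $A$ and $D$ via $AB^T=A$ and $DB=D$ (and in the reversed order via $BA=A$ and $B^TD=D$), forcing $\{U,A,D\}$; and composing two $\{U,B,B^T\}$ hom-sets gives $BB=B$ and $B^TB^T=B^T$, forcing $\{U,B,B^T\}$. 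In each case the resulting type is the $\zz_2$-sum of the input types, which is exactly the displayed identity.

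Finally, I would produce the colouring. Fix a basepoint $x_0\in\Obj(\catC)$ and set $f(x)=t(x_0,x)$, so $f(x_0)=0$. For distinct $x,y$ the cocycle identity together with symmetry gives
$$
f(x)+f(y)=t(x_0,x)+t(x_0,y)=t(x,x_0)+t(x_0,y)=t(x,y),
$$
so $f(x)\neq f(y)$ exactly on the type-$\{U,A,D\}$ hom-sets and $f(x)=f(y)$ exactly on the type-$\{U,B,B^T\}$ hom-sets; the cases where $x$ or $y$ equals $x_0$ are immediate. Hence $f$ meets every colouring constraint, the associated functor lies in the support, and $\catC$ has non-empty support. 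The main obstacle is the middle step: establishing additivity of $t$ genuinely relies on the specific products of Example \ref{ex:ABCDU}, and one must use the category axiom (closure of composition inside the prescribed hom-sets) to conclude that $\catC(x,z)$ is \emph{exactly} the type dictated by the $\zz_2$-sum, rather than merely containing the relevant matrices.
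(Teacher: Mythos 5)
Your proof is correct and takes essentially the same route as the paper's: both fix a basepoint, colour each object by the type of its hom-set to the basepoint ($0$ for $\{U,B,B^T\}$, $1$ for $\{U,A,D\}$), and use closure of composition together with the multiplication table of Example \ref{ex:ABCDU} to verify that this colouring is a support. Your $\zz_2$-additivity (cocycle) packaging of the verification is a tidier rephrasing of the paper's pairwise contradiction arguments rather than a genuinely different method.
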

\begin{proof}
Fix an object $x$ and define $F:\catC \to \catC_{\zz_2}$ by $F(x)=0$ and for every object $y\neq x$ by
$$
F(y)=
\begin{cases}
0 & \text{ if } \; \catC(x,y)=\{U,B,B^T\},  \\
1 & \text{ if }\; \catC(x,y)=\{U,A,D\}.
\end{cases}
$$
We will prove that $F$ is in the support of $\catC$. By the definition of $F$, for every object $y$ the  $(F(x),F(y))$-entry of all the matrices (morphisms) of $\catC(x,y)$ is $1$. 
Since $\catC$ is symmetric we have that the  $(F(y),F(x))$-entry of all the matrices in $\catC(y,x)$ is $1$. 
Now, 
{given} 
objects $y,z$ 
of $\catC$, if $y=z$, then $(F(y),F(z)) \in \{(0,0),(1,1)\}$ and for every matrix $Z\in \{I,U,B,B^T\}$ we have $Z_{0,0}=Z_{1,1}=1$. Therefore we can assume that $y \neq z$. There are two cases to consider:
\begin{enumerate}
\item
Suppose that $\catC(y,z)=\{U,A,D\}$. If 
$F(y)=0$ and $F(z)=0$, which means that $B \in \catC(x,y)$ and 
$B \in \catC(z,x)$, then we have  
$$
A=BBA \in \catC(z,z).
$$
But $\catC(z,z)=\{I,U,B,B^T\}$. If $F(y)=1$ and $F(z)=1$, which means that $D \in \catC(x,y)$ and 
$A \in \catC(z,x)$, then we have  
$$
A=ADA \in \catC(z,z).
$$
We conclude that $F(y)\neq F(z)$. So the $(F(y),F(z))$-entry of all the matrices in $\catC(y,z)$ is $1$. 

\item
Suppose that $\catC(y,z)=\{U,B,B^T\}$. If 
$F(y)=0$ and $F(z)=1$, which means that $B \in \catC(x,y)$ and 
$D \in \catC(z,x)$, then we have  
$$
D=DBB \in \catC(z,z).
$$
If $F(y)=1$ and $F(z)=0$, which means that $B \in \catC(x,z)$ and 
$D \in \catC(y,x)$, then we have  
$$
D=DBB \in \catC(y,y).
$$
We conclude that $F(y)= F(z)$. So the  $(F(y),F(z))$-entry of  all the matrices in $\catC(y,z)$ is $1$. 
\end{enumerate}
\end{proof}
\begin{cor}\label{cor:CxxADEmpt} 
Let $\catC$ be a symmetric logical category such that
\begin{itemize}
\item $\catC(x,x)\subset \{I,U,B,B^T\}$ for every $x\in \Obj(\catC)$.
\item $\catC(x,y)\subset \{U,A,B,B^T,D\}$ for every distinct objects $x,y\in \Obj(\catC)$, 
\end{itemize}
Then $\catC$ has non-empty support.
\end{cor}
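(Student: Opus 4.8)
The plan is to prove the statement by induction on the number of objects of $\catC$, constructing a functor in the support directly rather than reducing to Lemma~\ref{lem:NonEmptSup}. Recall (Definition~\ref{def:support of C(X,p)}) that a functor $F\colon\catC\to\catC_{\zz_2}$ is the same thing as a map $f\colon\Obj(\catC)\to\zz_2$ on objects, and that $F$ lies in the support of $\catC$ precisely when $M_{f(x),f(y)}=1$ for every $M\in\catC(x,y)$ and all $x,y$. I would fix an object $v$ and let $\catC^{-}$ be the full subcategory on the remaining objects; it is again a symmetric logical category satisfying the same hypotheses, so by the inductive hypothesis it carries a coloring $f$ in its support, which I then extend over $v$.

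For the extension, call a value $b\in\zz_2$ \emph{forbidden} at $v$ if some $P\in\catC(v,w)$ has $P_{b,f(w)}=0$. Every matrix allowed by the hypotheses (namely $I,U,A,B,B^{T},D$) has no zero column, so such a $P$ can forbid at most one value of $f(v)$; concretely $P$ forbids $1$ exactly when $P_{1,f(w)}=0$ and forbids $0$ exactly when $P_{0,f(w)}=0$. The crux is to show that $0$ and $1$ cannot both be forbidden. Suppose $P\in\catC(v,w)$ forbids $1$ and $Q\in\catC(v,w')$ forbids $0$, i.e.\ $P_{1,f(w)}=0$ and $Q_{0,f(w')}=0$. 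By symmetry $P^{T}\in\catC(w,v)$, and composing gives $P^{T}Q\in\catC(w,w')$. Computing the relevant Boolean entry,
\[
(P^{T}Q)_{f(w),f(w')}=\bigvee_{t\in\zz_2}P_{t,f(w)}\,Q_{t,f(w')}=0,
\]
since the $t=1$ term vanishes by $P_{1,f(w)}=0$ and the $t=0$ term by $Q_{0,f(w')}=0$. But $f$ lies in the support of $\catC^{-}$ and $w,w'\neq v$, so this entry must equal $1$ — a contradiction.

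Hence at most one value is forbidden at $v$, and I would set $f(v)$ to an unforbidden one; the diagonal hom-set $\catC(v,v)\subseteq\{I,U,B,B^{T}\}$ imposes no constraint because all of these matrices carry $1$'s on the main diagonal, and by symmetry the constraints coming from $\catC(w,v)$ coincide with those from $\catC(v,w)$. This produces a coloring of all of $\Obj(\catC)$ in the support of $\catC$, the base case of a single object being immediate. The only genuine obstacle is the conflict-freeness of the extension step, which is settled by the vanishing-entry identity above; notably this argument needs neither a case analysis on the matrices nor the hypotheses in the exact form used in Lemma~\ref{lem:NonEmptSup}, so it simultaneously reproves that lemma. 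An alternative would be to enlarge $\catC$ to a category meeting the exact hypotheses of Lemma~\ref{lem:NonEmptSup} and restrict the resulting support functor, but one then still has to verify that a consistent $\zz_2$-coloring of the objects exists, which is precisely what the induction accomplishes.
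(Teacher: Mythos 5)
Your argument is correct, and it takes a genuinely different route from the paper's. The paper first uses the hypothesis on $\catC(x,x)$ to rule out the pairs $\{A,B\}$, $\{A,B^T\}$, $\{D,B\}$, $\{D,B^T\}$ inside any hom-set, so that each $\catC(x,y)$ sits inside $\{U,A,D\}$ or $\{U,B,B^T\}$, and then views $\catC$ as a subcategory of a category meeting the exact hypotheses of Lemma \ref{lem:NonEmptSup}. You instead build the coloring directly by induction on the number of objects, and the two ingredients check out: none of $I,U,A,B,B^T,D$ has a zero column, so each $P\in\catC(v,w)$ forbids at most one value at $v$; and if $P_{1,f(w)}=0$ and $Q_{0,f(w')}=0$ then $(P^TQ)_{f(w),f(w')}=\bigvee_{t}P_{t,f(w)}Q_{t,f(w')}=0$ with $P^TQ\in\catC(w,w')$ a morphism of $\catC^{-}$, contradicting the inductive hypothesis. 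Your handling of the remaining constraints (diagonal hom-sets via the $1$'s on the main diagonal of $I,U,B,B^T$, and $\catC(w,v)$ via symmetry) is also sound. What your approach buys: it is self-contained, avoids the case analysis of Lemma \ref{lem:NonEmptSup} entirely, and deals cleanly with empty or ambiguous hom-sets --- precisely the point that the paper's phrase ``is a subcategory of the category described in Lemma \ref{lem:NonEmptSup}'' leaves implicit, since one must still choose a consistent type ($\{U,A,D\}$ versus $\{U,B,B^T\}$) for each pair when enlarging, which is the same consistency problem your induction solves. The one caveat is that the induction as written requires $\Obj(\catC)$ to be finite; this holds in every application in the paper (the measurement spaces have finitely many simplices), and could be repaired by transfinite induction since the support condition is checked pairwise, but it is worth stating as a hypothesis if you present the argument this way.
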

\begin{proof}
{The first condition} implies that for every  $x,y\in \Obj(\catC)$ the set $\catC(x,y)$ does not contain any of the sets 
$$
\set{A,B}, \set{A,B^T}, \set{D,B}, \set{D,B^T}.
$$
This means that $\catC$ is a subcategory of the category described in  Lemma \ref{lem:NonEmptSup}. Therefore $\catC$ has non-empty support. 
\end{proof}
\begin{pro}\label{pro:ABDLSC}
Let $X$ be a $1$-skeletal simplicial set and $p:X\to D_\BB(\Delta_{\ZZ_2})$ be a simplicial distribution such that 
$p_{\tau}\in \{A,B,B^{T},D\}$ for every $\tau\in X_1^\circ$. 
Then the category $\catC(X,p)$ has an empty support if and only if there exits $x \in X_0$ such that $\{A,D\}\subset \catC(X,p)(x,x)$.
\end{pro}
\begin{proof} Through the proof we will write $\catC=\catC(X,p)$. If $A \in \catC(x,x)$, then every $F$ in the support of $\catC$ has to satisfy $F(x)=0$. On the other hand, if $D \in \catC(x,x)$, 
then every $F$ in the support of $\catC$ has to satisfy $F(x)=1$. Thus {if $\set{A,D} \subset \catC(x,x)$ then} the support is empty.

For the converse, suppose that $\{A,D\}\not\subset \catC(x,x)$ for every $x \in X_0$. Let $\catC'$ be the subcategory of $\catC$   generated by the objects $x$ satisfying $\{A,D\}\cap \catC(x,x)=\emptyset$. 
By Corollary \ref{cor:CxxADEmpt} there is a support $F: \catC' \to \catC_{\Delta_{\zz_2}}$ 
for $\catC'$. We extend $F$ to $\catC$ by defining 
$$
F(x)=\begin{cases}
0 & A \in \catC(x,x) \\
1 & D \in \catC(x,x).
\end{cases}
$$
It remains to prove that this extension gives   a support of $\catC $. Given an edge $\tau$ of $X$ with $d_1(\tau)=x$ and $d_0(\tau)=y$, if 
$x,y \in \Obj(\catC')$ then all the matrices in $\catC(x,y)$ are in $\catC'$, so $M_{F(M)}=1$ for every $M \in \catC (x,y)$. Therefore we suppose that  $x\notin \Obj(\catC')$ or $y\notin \Obj(\catC')$.
Assume that $x=y$ (so $x$ is not in $\Obj(\catC')$).
If $A \in \catC(x,x)$ then $F(x)=0$ and $U_{0,0}=B_{0,0}=B^T_{0,0}=A_{0,0}=1$. If $D \in \catC(x,x)$ then $F(x)=1$ and $U_{1,1}=B_{1,1}=B^T_{1,1}=D_{1,1}=1$. Now, suppose that $x \neq y$. We have three cases:
\begin{enumerate}
\item 
$A \in \catC(x,x)$ and  $A \in \catC(y,y)$: In this case $F(M)=(0,0)$ for every $M \in \catC(x,y)$. We have to prove that $D \notin \catC(x,y)$. If $D \in \catC(x,y)$ then $AD=B \in \catC(x,y)$. In addition, $D \in \catC(y,x)$. {Therefore} we obtain that $DB=D\in \catC(y,y)$, but then $\{A,D\} \subset \catC(y,y)$.  Similarly, one can deal with the case where
$D \in \catC(x,x)$ and  $D \in \catC(y,y)$. 

\item
$A \in \catC(x,x)$ and  $D \in \catC(y,y)$: In this case $F(M)=(0,1)$ for every $M \in \catC(x,y)$. We have to prove that $B^T \notin \catC(x,y)$. If $B^T \in \catC(x,y)$ then $AB^T=A \in \catC(x,y)$, so $A \in \catC(y,x)$. Again $AB^T=A \in \catC(y,y)$, but then
$\{A,D\} \subset \catC(y,y)$.  Similarly, one can deal with the case where
$D \in \catC(x,x)$ and  $A \in \catC(y,y)$. 

\item
$A \in \catC(x,x)$ and  $\{A,D\} \cap \catC(y,y)=\emptyset$: In this case $F(M)$ can be $(0,0)$ for every $M \in \catC(x,y)$ or $(0,1)$ for every $M \in \catC(x,y)$. If  $F(M)=(0,0)$ then we have to prove that $D \notin \catC(x,y)$. If $D \in \catC(x,y)$ then as in case (1) we obtain that $D \in \catC(y,y)$. If $F(M)=(0,1)$ then we have to prove that $B^T \notin \catC(x,y)$. If $B^T \in \catC(x,y)$ then as in case (2) we obtain that $A \in \catC(y,y)$.
Similarly, one can deal with the other three analogous cases. 
\end{enumerate}
\end{proof}
Next, we introduce a definition
which will help us to characterize strong contexuality.

\begin{defn}
{\rm
A simplicial distribution $p: \Delta[n] \to D_R(Y)$ has a {\it boundary-extendable support} if its restriction to the boundary induces a surjective map between the supports
$$
\supp(p) \to \supp(p|_{\partial\Delta[n]})
$$
}
\end{defn}
%


\begin{ex}\label{ex:Fotgettttt}
{\rm
A distribution $p \in \catsSet(\Delta[1]
,D_{\BB}(\Delta_{\zz_2}))$ has a boundary-extendable support if $p_{\sigma}$ is {a} {deterministic distribution or} one of the following: 
$$
\begin{pmatrix} 
	1 & 0 \\
	1 & 0 \\
	\end{pmatrix}
,\;\;
\begin{pmatrix} 
	1 & 1 \\
	0 & 0 \\
	\end{pmatrix}
,\;\;
\begin{pmatrix} 
	0 & 1 \\
	0 & 1 \\
	\end{pmatrix}
,\;\;
\begin{pmatrix} 
	0 & 0 \\
	1 & 1 \\
	\end{pmatrix}
,\;\;
\begin{pmatrix} 
	1 & 1 \\
	1 & 1 \\
	\end{pmatrix}.
$$
On the other hand, $p$ does not have a boundary-extendable support if $p_{\sigma}$ is one of the following:   
$$
\begin{pmatrix} 
	1 & 0 \\
	0 & 1 \\
	\end{pmatrix}
,\;\;
\begin{pmatrix} 
	0 & 1 \\
	1 & 0 \\
	\end{pmatrix}
,\;\;
\begin{pmatrix} 
	1 & 1 \\
	1 & 0 \\
	\end{pmatrix}
,\;\;
\begin{pmatrix} 
	1 & 1 \\
	0 & 1 \\
	\end{pmatrix}
,\;\;
\begin{pmatrix} 
	1 & 0 \\
	1 & 1 \\
	\end{pmatrix}
,\;\;
\begin{pmatrix} 
	0 & 1 \\
	1 & 1 \\
	\end{pmatrix}.
$$
}
\end{ex}
\begin{lem}
Every deterministic distribution $\delta^{\psi}$ on the simplicial scenario $(\Delta[n],Y)$ has a boundary-extendable support.
\end{lem}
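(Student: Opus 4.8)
The plan is to exploit the fact that the support of a deterministic distribution consists of a single map, namely the defining map itself, and that restriction to the boundary commutes with the formation of deterministic distributions. Once these two observations are in place the surjectivity becomes a triviality between one-point sets.

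First I would establish that for any simplicial set map $\psi:X\to Y$ one has $\supp(\delta^{\psi})=\set{\psi}$. Indeed, $\varphi\in\supp(\delta^{\psi})$ means $(\delta^{\psi})_n(x)(\varphi_n(x))=\delta^{\psi_n(x)}(\varphi_n(x))\neq 0$ for every simplex $x\in X_n$ and every $n\geq 0$, and since $\delta^{\psi_n(x)}$ is the delta distribution peaked at $\psi_n(x)$ this forces $\varphi_n(x)=\psi_n(x)$ for all $x$ and all $n$. As a simplicial set map is determined by its values on simplices, this gives $\varphi=\psi$; conversely $\psi$ visibly lies in the support. Next I would note that restriction along the inclusion $\iota:\partial\Delta[n]\hookrightarrow\Delta[n]$ carries $\delta^{\psi}$ to a deterministic distribution again: writing $\delta^{\psi}=\delta_Y\circ\psi$ we have $\iota^\ast(\delta^{\psi})=\delta_Y\circ\psi\circ\iota=\delta^{\psi|_{\partial\Delta[n]}}$. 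Applying the singleton computation to the map $\psi|_{\partial\Delta[n]}$ then yields $\supp(\delta^{\psi}|_{\partial\Delta[n]})=\set{\psi|_{\partial\Delta[n]}}$.

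Finally I would assemble these facts. The restriction map $\supp(\delta^{\psi})\to\supp(\delta^{\psi}|_{\partial\Delta[n]})$ sends $\varphi\mapsto\varphi\circ\iota$, so it sends the unique element $\psi$ of the source to $\psi|_{\partial\Delta[n]}$, which is precisely the unique element of the target. Hence the map is surjective and $\delta^{\psi}$ has a boundary-extendable support. The only point demanding any care is that the restriction map is well defined, i.e.\ that $\varphi\in\supp(\delta^{\psi})$ restricts into $\supp(\delta^{\psi}|_{\partial\Delta[n]})$; this is immediate since the simplices of $\partial\Delta[n]$ form a subset of the simplices of $\Delta[n]$, so the nonvanishing conditions defining the support are inherited under restriction. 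There is no genuine obstacle in this argument: all of the content sits in the observation that the support of a deterministic distribution is the one-point set determined by its defining map.
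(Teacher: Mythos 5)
Your proposal is correct and follows essentially the same route as the paper: identify the restriction of $\delta^{\psi}$ to the boundary as $\delta^{\psi|_{\partial\Delta[n]}}$, observe that the support of a deterministic distribution is the singleton consisting of its defining map, and conclude that the restriction map between supports is a surjection of one-point sets. You simply spell out the singleton computation in more detail than the paper does.
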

\begin{proof}
The restriction of $\delta^\psi$ to the boundary is given by $\delta^{\psi|_{\partial\Delta[n]}}$. Therefore the only map in the support of the restriction is $\psi|_{\partial\Delta[n]}$ and it extends to $\psi \in \supp(\delta^\psi)$.
\end{proof}

For a {generating}
simplex $\sigma\in X$ we will write $X-\sigma$ for the simplicial set obtained from $X$ by omitting the simplex $\sigma$ but keeping its boundary.

\begin{pro}\label{pro:OmmitForg}
Let $\sigma\in X_n$ be a {generating} simplex such that $p_\sigma$ has a boundary-extendable support.
Then $p$ is strongly contextual if and only if $p|_{X-\sigma}$ is strongly contextual.
\end{pro}
\begin{proof}
If $\psi \in \supp(p)$ then $\psi|_{X-\sigma} \in \supp(p|_{X-\sigma})$. On the other hand, if $\varphi \in \supp(p|_{X-\sigma})$, then $\varphi|_{\partial \sigma}\in \supp(p|_{\partial \sigma})$ can be extended to a map in $\supp(p_\sigma)$. This give us a map in $\supp(p)$. 
\end{proof}

\begin{pro}\label{Pro:CharofLSC}
Let $X$ be a $1$-skeletal simplicial set and $p:X\to D_\BB(\Delta_{\ZZ_2})$ be a simplicial distribution. Then $p$ is strongly contextual if and only if there exits $x \in X_0$ such that $\{A,D\}\subset \catC(X,p)(x,x)$ or $\begin{pmatrix} 
	0 & 1\\
	1 & 0 \\
	\end{pmatrix}
	\in \catC(X,p)(x,x)$.
\end{pro}
\begin{proof}
If for some $x \in X_0$ we have $\{A,D\}\subset \catC(X,p)(x,x)$ or $\begin{pmatrix} 
	0 & 1\\
	1 & 0 \\
	\end{pmatrix}
	\in \catC(X,p)(x,x)$, then $\catC(X,p)$ has an empty support. By Proposition \ref{pr:DisSCCatSC}  $p$ is strongly contextual. 
	
	Conversely, suppose that $p$ is strongly contextual. Let $\tilde{X}$ be the simplicial subset of $X$ obtained by omitting every {generating} simplex $\sigma$ whenever $p_{\sigma}$ has a boundary-extendable support. By Proposition \ref{pro:OmmitForg}  $p|_{\tilde{X}}$ is strongly contextual.	
Then for every edge $\tau$ of $\tilde{X}$ the distribution $p_{\tau}$ is one of the distributions of Example \ref{ex:Fotgettttt} that does not have a boundary-extendable support. Fix an edge $\tau$ of $\tilde{X}$. If 
we have
$$ 
p_\tau =\begin{pmatrix} 
	1 & 0 \\
	0 & 1 \\
	\end{pmatrix}
$$
then $p_{\tau}$  vanishes on $(\Delta_{\zz_2})_1-s_0((\Delta_{\zz_2})_0)=
\zz_2\times \zz_2-s_0(\zz_2)=\{(0,1),(1,0)\}$. 
By Corollary \ref{cor:CollaDostCha} the simplicial distribution $p|_{\tilde{X}}$ factors through the collapsing map $\pi: \tilde{X} \to \tilde{X}^{s^0,\tau}$ that comes from the following pushout:
$$
\begin{tikzcd}[column sep =huge, row sep =large]
\Delta[1]
\arrow[r,"\tau"]
 \arrow[d,"s^0"'] & \tilde X  
 \arrow[d,"\pi"] \\
\Delta[0]
 \arrow[r] & \tilde{X}^{s^0,\tau} 
\end{tikzcd}
$$
This means that there is a simplicial distribution $p'\in \catsSet(\tilde{X}^{s^0,\tau},D_{\BB}(\Delta_
{\zz_2}))$ such that $\pi^\ast(p')=p|_{\tilde{X}}$. According to Theorem \ref{thm:CollapThm} the distribution $p'$ is strongly contextual. We collapse all edges with such a distribution to {obtain} a strongly contextual distribution $p''$ on the collapsed space $\tilde{X}'$. If we have an edge $\tau$ with 
%
$$ p''_\tau=
\begin{pmatrix} 
	0 & 1 \\
	1 & 0 \\
\end{pmatrix}
$$
then by choosing a simplicial map $\varphi: \tilde{X}' \to \zz_2$ with $\varphi_{d_1(\tau)}=0$ and 
 $\varphi_{d_0(\tau)}=1$, we obtain that $(\delta^{\varphi} \cdot p'')_{\tau}= 
\begin{pmatrix} 
	1 & 0 \\
	0 & 1 \\
\end{pmatrix}
$. Moreover, for every edge $\sigma$ such that
$p''_{\sigma}$ is one of the following distributions $A,B,B^T,D$, the distribution $(\delta^\varphi \cdot p'')_{\sigma}$ is also one of the distributions $A,B,B^T,D$.
By Proposition \ref{pro:SCGroupaction} the distribution $\delta^{\varphi}\cdot p''$ is strongly contextual. In this case, we can collapse the edge $\tau$ as before, and still have a strongly contextual distribution.
If by this process we obtain a circle with a single  edge on which we have the distribution $\begin{pmatrix} 
	1 & 0 \\
	0 & 1 \\
\end{pmatrix}$, then it is easy to see that  
the  simplicial distribution $q:W \to D_{\BB}(\Delta_{\zz_2})$ obtained by omitting this kind of circles is still strongly contextual.
We have the following two cases:
\begin{enumerate}
 
\item 
We have a circle with a single edge $\sigma$ in $W$ such that $q_{\sigma}=\begin{pmatrix} 
	0 & 1 \\
	1 & 0 \\
\end{pmatrix}$, which is the only strongly contextual distribution on such a circle. 
As a consequence of Theorem \ref{thm:CollapThm} and  Proposition \ref{pro:SCGroupaction} there exists a circle $C \subset X$ such that $p|_{C}$ is strongly contextual. So by Proposition \ref{pro:SCCircle} we conclude that for every vertex $x$ in $C$ we have   
$$
\begin{pmatrix} 
	0 & 1 \\
	1 & 0 \\
\end{pmatrix} \in \catC(X,p)(x,x).
$$
\item In the case that there is no circle with a single edge with the distribution $\begin{pmatrix} 
	0 & 1 \\
	1 & 0 \\
\end{pmatrix}$, our induced distribution $q:W \to D_{\BB}(\Delta_{\zz_2})$ satisfies the conditions of Proposition \ref{pro:ABDLSC}. Therefore there exists $w\in W_0$ such that $A,D \in \catC(W,q)(w,w)$. {Note that the collapsed distribution  {$\begin{pmatrix} 
	1 & 0 \\
	0 & 1 \\
\end{pmatrix}$} {serves as} the identity 
{element}
in the product of matrices. In addition, we have}   
$$
\delta^{(0,0)}\ast A=A, \;\;  \delta^{(0,0)}\ast D=D, \;\; \delta^{(1,1)}\ast A=D, \;\;  \delta^{(1,1)}\ast D=A.
$$
%
Therefore using Proposition \ref{pro:GroupactCat} we conclude that 
$A,D \in \catC(X,p)(x,x)$ for some vertex $x \in X_0$. 
\end{enumerate} 
\end{proof}
We will use Proposition \ref{Pro:CharofLSC}  to give a  characterization for   
strong contextuality when the semiring is $\rr_{\geq 0}$.

\begin{pro}\label{pr:SCandLSC}
A simplicial distribution $p:X\to D(Y)$ is strongly contextual if and only if $\Pi_*(p)$ is strongly contextual {(see the map {in} (\ref{eq:PiRealToBool})).} 
\end{pro}
\begin{proof}
Follows directly from the definition of strong contextuality.
\end{proof}

\begin{lem}\label{lem:ADst}Let $p:\Delta[1]\to D(\Delta_{\ZZ_2})$ be a simplicial distribution. 
We have the following: 
\begin{enumerate}
\item If $\Pi_*(p)_{\sigma^{01}}=A$ then $p_{\sigma^1}(0)+p_{\sigma^0}(0)>1$.
\item If $\Pi_*(p)_{\sigma^{01}}=D$ then $p_{\sigma^1}(0)+p_{\sigma^0}(0)<1$.
\item If $\Pi_*(p)_{\sigma^{01}}=B$ then $p_{\sigma^0}(0)>p_{\sigma^1}(0)$.
\item If $\Pi_*(p)_{\sigma^{01}}=B^T$ then $p_{\sigma^0}(0)<p_{\sigma^1}(0)$.
 
\item If $\Pi_*(p)_{\sigma^{01}}=\begin{pmatrix} 
	1 & 0 \\
	0 & 1 \\
	\end{pmatrix}$ then $p_{\sigma^0}(0)=p_{\sigma^1}(0)$.
\item If $\Pi_*(p)_{\sigma^{01}}=\begin{pmatrix} 
	0 & 1 \\
	1 & 0 \\
	\end{pmatrix}$ then $p_{\sigma^0}(0)=1-p_{\sigma^1}(0)$.
\end{enumerate}
\end{lem}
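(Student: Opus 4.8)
The plan is to reduce every clause of the statement to a one-line arithmetic identity in the entries of the matrix $M$ with $M_{ab}=p_{\sigma}^{ab}$, where $\sigma=\sigma^{01}$. The two quantities appearing throughout are the two marginals of this joint distribution, so the first step I would take is to identify them. Since $p$ is a simplicial set map it commutes with the face maps, and $\sigma^{0}=d_1(\sigma^{01})$, $\sigma^{1}=d_0(\sigma^{01})$. Using the face maps of $\Delta_{\ZZ_2}$ (Definition \ref{def:DeltaU}), $d_1$ forgets the second coordinate while $d_0$ forgets the first, so
$$
p_{\sigma^0}(0)=D(d_1)(p_\sigma)(0)=M_{00}+M_{01},\qquad p_{\sigma^1}(0)=D(d_0)(p_\sigma)(0)=M_{00}+M_{10}.
$$
That is, $p_{\sigma^0}(0)$ is the sum of the top row of $M$ and $p_{\sigma^1}(0)$ is the sum of the left column.

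The second observation I would record is that $\Pi_*(p)_\sigma$ captures precisely the support pattern of $M$: its $(a,b)$-entry equals $1$ when $M_{ab}>0$ and $0$ when $M_{ab}=0$. Thus each hypothesis fixes exactly which of the four entries of $M$ vanish and which are strictly positive, and the normalization $M_{00}+M_{01}+M_{10}+M_{11}=1$ is always available. With these two ingredients each case becomes immediate. For instance, in case (1) the pattern $A$ forces $M_{11}=0$ with the other three entries positive, whence
$$
p_{\sigma^0}(0)+p_{\sigma^1}(0)=2M_{00}+M_{01}+M_{10}=M_{00}+(M_{00}+M_{01}+M_{10})=M_{00}+1>1,
$$
using $M_{00}>0$. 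Case (2) (pattern $D$) is dual: $M_{00}=0$ gives $p_{\sigma^0}(0)+p_{\sigma^1}(0)=M_{01}+M_{10}=1-M_{11}<1$. Cases (3) and (4) follow from $p_{\sigma^0}(0)-p_{\sigma^1}(0)=M_{01}-M_{10}$ together with the sign of the off-diagonal entries forced by $B$ and $B^T$; case (5) forces $M_{01}=M_{10}=0$, so both marginals equal $M_{00}$; and case (6) forces $M_{00}=M_{11}=0$, giving $p_{\sigma^0}(0)=M_{01}=1-M_{10}=1-p_{\sigma^1}(0)$.

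There is no genuine obstacle here beyond bookkeeping: the only point requiring care is fixing the face-map convention in the first step, i.e.\ correctly matching $p_{\sigma^0}(0)$ to the row sum and $p_{\sigma^1}(0)$ to the column sum of $M$. Once that identification is pinned down, each of the six clauses collapses to the single normalization identity combined with the strict positivity of the entries marked $1$ by $\Pi_*$.
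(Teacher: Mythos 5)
Your proposal is correct and follows essentially the same route as the paper: identify $p_{\sigma^0}(0)$ and $p_{\sigma^1}(0)$ as the row and column marginals of the matrix $M$ via the face maps of $\Delta_{\ZZ_2}$, then use the vanishing pattern forced by $\Pi_*$ together with the normalization $\sum_{a,b}M_{ab}=1$. The paper only writes out case (1) explicitly (obtaining the same identity $p_{\sigma^1}(0)+p_{\sigma^0}(0)=M_{00}+1$) and declares the rest similar, whereas you sketch all six; the content is identical.
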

\begin{proof}
We will prove only 
 part (1),  other parts are similar. Suppose that $p_{\sigma^{01}}=\begin{pmatrix} 
	p_1 & p_2 \\
	p_3 & p_4 \\
	\end{pmatrix}$, then we have 
$$
p_{\sigma^1}(0)=
D(d_0)(p_{\sigma^{01}})(0)=p_{\sigma^{01}}(0,0)+p_{\sigma^{01}}(1,0)=p_1+p_3.
$$ 
Similarly, $p_{\sigma^0}(0)=p_1+p_2$.  Since $\Pi_*(p)_{\sigma^{01}}=A$ we obtain that $p_4=0$. Therefore we have 
$$
p_{\sigma^1}(0)+p_{\sigma^0}(0)=p_1+p_3+p_1+p_2
=p_1+1>1.
$$

\end{proof}

\begin{cor}\label{cor:BoleanLift}
Let $X$ be a $1$-skeletal simplicial set and  $p: X\to D(\Delta_{\ZZ_2})$ be a simplicial distribution, we have the following:
\begin{enumerate}
\item If $A \in \catC(X,\Pi_*(p))(x,y)$ then $p_x(0)+p_y(0)>1$. 
\item If $D \in \catC(X,\Pi_*(p))(x,y)$ then $p_x(0)+p_y(0)<1$.
\item If $B \in \catC(X,\Pi_*(p))(x,y)$ then $p_x(0) >p_y(0)$.
\item If $B^T \in \catC(X,\Pi_*(p))(x,y)$ then $p_x(0)<p_y(0)$.
\item If $\begin{pmatrix} 
	1 & 0 \\
	0 & 1 \\
	\end{pmatrix}
\in \catC(X,\Pi_*(p))(x,y)$	then $p_{x}(0)=p_{y}(0)$.
\item If $\begin{pmatrix} 
	0 & 1 \\
	1 & 0 \\
	\end{pmatrix}
\in \catC(X,\Pi_*(p))(x,y)$	then $p_{x}(0)=1-p_{y}(0)$.
\end{enumerate}
\end{cor}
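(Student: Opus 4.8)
The plan is to reduce all six cases to the corresponding cases of Lemma \ref{lem:ADst} by lifting the Boolean morphism $M$ to an honest real-valued distribution supported on a single edge. First I would fix $M\in\catC(X,\Pi_*(p))(x,y)$ and use the functor $\Pi_*:\catC(X,p)\to\catC(X,\Pi_*(p))$ coming from Diagram (\ref{dia:PiPi}). Since every morphism of $\catC(X,\Pi_*(p))$ is, by construction (Definition \ref{def:C(X,p)}), a composite of the matrices $\Pi_*(p)_\sigma$ and $\Pi_*(p)_\sigma^T$ along a path from $x$ to $y$, and since $\Pi_*$ sends the composition of Definition \ref{def:ComposDist} to matrix multiplication, I can form the real-valued composite $q\in\catC(X,p)(x,y)$ along that same path, so that $\Pi_*(q)=M$.

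Next I would identify the marginals of $q$ using Lemma \ref{lem:Impo}: as an element of $D(\ZZ_2\times\ZZ_2)$ the morphism $q$ satisfies $D(d_1)(q)=p_x$ and $D(d_0)(q)=p_y$. Regarding $q$ as the value $q'_{\sigma^{01}}$ of a simplicial distribution $q':\Delta[1]\to D(\Delta_{\ZZ_2})$, simpliciality forces $q'_{\sigma^0}=D(d_1)(q)=p_x$ and $q'_{\sigma^1}=D(d_0)(q)=p_y$, while $\Pi_*(q')_{\sigma^{01}}=\Pi_*(q)=M$.

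Finally I would apply Lemma \ref{lem:ADst} to $q'$. Read through the dictionary $p_{\sigma^0}(0)=p_x(0)$ and $p_{\sigma^1}(0)=p_y(0)$, each of its six conclusions is precisely the corresponding statement of the corollary: for instance $\Pi_*(q')_{\sigma^{01}}=A$ gives $p_{\sigma^1}(0)+p_{\sigma^0}(0)>1$, i.e.\ $p_x(0)+p_y(0)>1$, and likewise $B$ gives $p_{\sigma^0}(0)>p_{\sigma^1}(0)$, i.e.\ $p_x(0)>p_y(0)$. This disposes of all six cases simultaneously.

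The argument is largely bookkeeping once the lift is available, so I do not expect a genuine obstacle. The two points that need care are the first step—that a Boolean morphism really does arise as $\Pi_*$ of a real-valued composite along the \emph{same} edge-path, which is exactly what makes Lemma \ref{lem:Impo} applicable to recover the vertex marginals $p_x,p_y$—and keeping straight which face map ($d_1$ versus $d_0$) produces the source marginal $p_x$ and which produces the target marginal $p_y$, so that the inequalities are matched to the correct endpoints.
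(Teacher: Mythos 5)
Your proposal is correct and follows exactly the paper's own argument: lift $M$ to $q\in\catC(X,p)(x,y)$ with $\Pi_*(q)=M$ via the commutativity of Diagram (\ref{dia:PiPi}), identify the marginals $D(d_1)(q)=p_x$ and $D(d_0)(q)=p_y$ by Lemma \ref{lem:Impo}, and conclude by Lemma \ref{lem:ADst}. The only difference is that you spell out the bookkeeping (which face map gives which endpoint) that the paper leaves implicit, and you get it right.
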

\begin{proof}
Let $M \in \catC(X,\Pi_*(p))(x,y)$. By Diagram (\ref{dia:PiPi}) there exists $q \in \catC(X,p)(x,y)$ such that $\Pi_\ast(q)=M$. Then by Lemmas \ref{lem:Impo} and \ref{lem:ADst} we obtain the result.
\end{proof}

%
%

%
%
%

%
\begin{thm}\label{thm:SC1skelz2}
Let $X$ be a $1$-skeletal simplicial set.
A simplicial distribution $p:X\to D(\Delta_{\ZZ_2})$
is strongly contextual if and only if there exists a circle $C\subset X$ such that $p|_{C}$ is a PR box (see Definition \ref{def:PR} and Remark \ref{rem:PRBox}).
\end{thm}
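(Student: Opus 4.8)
The plan is to reduce the real statement to the Boolean one, apply the category-theoretic characterization of strong contextuality, and then use the numerical constraints of Corollary \ref{cor:BoleanLift} to pin the edges of a suitable circle to the matrices $p_+,p_-$.

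\emph{Backward direction.} Suppose $p|_C$ is a PR box on a circle $C\subset X$. Under the isomorphism $\Delta_{\ZZ_2}\cong\Dec^0(N\ZZ_2)$ of Lemma \ref{lem:DecDelta} (see Remark \ref{rem:PRBox}), Corollary \ref{ex:PRBox} shows $p|_C$ is strongly contextual, i.e.\ $\supp(p|_C)=\emptyset$. Since restriction along the inclusion $C\hookrightarrow X$ defines a map $\supp(p)\to\supp(p|_C)$, $\varphi\mapsto\varphi|_C$, emptiness of the target forces $\supp(p)=\emptyset$, so $p$ is strongly contextual.

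\emph{Forward direction.} By Proposition \ref{pr:SCandLSC}, $p$ is strongly contextual iff $\Pi_*(p)$ is, so I would apply Proposition \ref{Pro:CharofLSC} to get a vertex $x\in X_0$ with either $\{A,D\}\subset\catC(X,\Pi_*(p))(x,x)$ or the swap matrix $\left(\begin{smallmatrix}0&1\\1&0\end{smallmatrix}\right)\in\catC(X,\Pi_*(p))(x,x)$. The first alternative is impossible for a genuine $\RR_{\geq0}$-distribution: by parts (1) and (2) of Corollary \ref{cor:BoleanLift} it would force $p_x(0)+p_x(0)>1$ and $p_x(0)+p_x(0)<1$ at once. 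Hence only the swap alternative survives, and, running the collapsing argument in the proof of Proposition \ref{Pro:CharofLSC} (its case (1), using Theorem \ref{thm:CollapThm} and Proposition \ref{pro:SCGroupaction}), this yields an honest circle $C\subset X$ on which $\Pi_*(p)|_C$ is strongly contextual.

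It then remains to upgrade this Boolean circle to a genuine PR box for the real distribution. By Corollary \ref{cor:LogicalPRbox}, every edge matrix $\Pi_*(p_{\sigma_j})$ of $\Pi_*(p)|_C$ is either the identity or the swap, with an odd number of swaps. Parts (5) and (6) of Corollary \ref{cor:BoleanLift} read these as $p_v(0)=p_w(0)$ across an identity edge and $p_v(0)=1-p_w(0)$ across a swap edge; traversing $C$ with an odd number of swaps therefore yields $p_x(0)=1-p_x(0)$, so $p_x(0)=\tfrac12$, and propagating around $C$ gives $p_v(0)=\tfrac12$ at every vertex $v$ of $C$. Finally, for each edge I would write $p_{\sigma_j}$ as a normalized $2\times2$ matrix and impose that both its row-$0$ sum $p_{d_1\sigma_j}(0)$ and column-$0$ sum $p_{d_0\sigma_j}(0)$ equal $\tfrac12$; for an identity-type edge this forces $p_{\sigma_j}=p_+$ and for a swap-type edge $p_{\sigma_j}=p_-$. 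Thus $p|_C$ has every edge equal to $p_\pm$ with an odd number of $p_-$'s, i.e.\ a PR box in the sense of Definition \ref{def:PR} and Remark \ref{rem:PRBox}.

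I expect the main obstacle to be the extraction of an honest circle (with pairwise distinct edges) rather than a closed walk possibly repeating edges or passing through boundary-extendable ones, from the endomorphism $\left(\begin{smallmatrix}0&1\\1&0\end{smallmatrix}\right)\in\catC(X,\Pi_*(p))(x,x)$. I would handle this exactly as in the collapsing argument underlying Proposition \ref{Pro:CharofLSC}, where boundary-extendable and identity-collapsible contributions are removed while preserving strong contextuality, leaving a circle carrying the swap.
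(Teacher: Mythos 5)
Your proposal is correct and follows essentially the same route as the paper's proof: reduce to the Boolean distribution via $\Pi_*$, apply Proposition \ref{Pro:CharofLSC}, rule out the $\{A,D\}$ alternative using parts (1)--(2) of Corollary \ref{cor:BoleanLift}, extract a circle on which $\Pi_*(p)$ is strongly contextual, and then use Corollary \ref{cor:LogicalPRbox} with parts (5)--(6) of Corollary \ref{cor:BoleanLift} to force the vertex marginals to $\tfrac12$ and hence every edge to $p_\pm$ with an odd number of $p_-$'s. The only cosmetic difference is that you propose rerunning the collapsing argument to obtain an honest circle, whereas the paper invokes Proposition \ref{pro:SCCircle} at that point; both amount to the same reduction.
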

\begin{proof}
Suppose that there exists a circle $C\subset X$ such that the restriction $p|_{C}$ is a PR box. This  is a strongly contextual distribution according to Corollary \ref{ex:PRBox}. Therefore $p$ is strongly contextual, because if $\varphi \in \supp(p)$, then $\varphi|_C \in \supp(p|_{C})$.

Conversely, suppose that $p$ is strongly contextual. By Proposition \ref{pr:SCandLSC}  $\Pi_\ast(p)$ is strongly contextual. 
Therefore according to Proposition \ref{Pro:CharofLSC} there exists a vertex $x$ of $X$ such that $\{A,D\}\subset \catC(X,\Pi_\ast(p))(x,x)$ or $\begin{pmatrix} 
	0 & 1\\
	1 & 0 \\
	\end{pmatrix}
	\in \catC(X,\Pi_\ast(p))(x,x)$. 
	According to Corollary \ref{cor:BoleanLift}, $A \in \catC(X,\Pi_\ast(p))(x,x)$ implies that $p_x(0) + p_x(0)>1$. On the other hand, $D \in \catC(X,\Pi_\ast(p))(x,x)$ implies that $p_x(0) +p_x(0)<1$. We conclude that $\begin{pmatrix} 
	0 & 1\\
	1 & 0 \\
	\end{pmatrix}
	\in \catC(X,\Pi_\ast(p))(x,x)$, which is the unique strongly contextual distribution on the circle with a single edge. 
	By Proposition \ref{pro:SCCircle} we conclude that there exists a circle $C \subset X$ such that $\Pi_\ast(p)|_C$ is strongly contextual. Then by Corollary  \ref{cor:LogicalPRbox} we obtain that  
$\pi_\ast(p)_{\sigma}\in \set{
\begin{pmatrix} 
	1 & 0 \\
	0 & 1\\
	\end{pmatrix}, \begin{pmatrix} 
	0 & 1 \\
	1 & 0 \\
	\end{pmatrix}
}$ for every edge $\sigma$ of the circle $C$ such that the number of $\begin{pmatrix} 
	0 & 1 \\
	1 & 0 \\
	\end{pmatrix}$ is odd. Finally, given a vertex $x$ of $C$, by parts (5) and (6) of {Corollary \ref{cor:BoleanLift}} we obtain that $p_{x}(0)=1-p_{x}(0)$, which {gives} $p_{x}(0)=\frac{1}{2}$. Therefore $p$ is a PR box.     
\end{proof}

\begin{cor} 
\label{cor:homotopical characterization of strong contextuality}
Let $X$ be a $1$-skeletal simplicial set.
A simplicial distribution $p:\Delta[0]\ast X\to D(N\ZZ_2)$ 
is strongly contextual if and only if there exists a circle $C\subset X$ 
such that $p|_{C}=\delta^{\varphi}$ for some simplicial set map $\varphi: C \to N\ZZ_2$ that is not null-homotopic.
\end{cor}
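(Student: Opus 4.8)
The plan is to deduce this from the décalage form of the main theorem, Theorem \ref{thm:SC1skelz2}, by transporting the statement across the cone--décalage adjunction (\ref{eq:Cone vs declage adjunction}). Since $\Dec^0$ is defined degreewise and commutes with the distribution functor, one has $\Dec^0 D(N\ZZ_2)=D(\Dec^0 N\ZZ_2)$, so the adjunction gives a bijection $\catsSet(\Delta[0]\ast X, D(N\ZZ_2))\cong\catsSet(X,D(\Dec^0 N\ZZ_2))$; composing with the isomorphism $\Delta_{\ZZ_2}\cong\Dec^0(N\ZZ_2)$ of Lemma \ref{lem:DecDelta}, I would replace the given $p$ by its transpose $\tilde p:X\to D(\Delta_{\ZZ_2})$, which is exactly the kind of distribution Theorem \ref{thm:SC1skelz2} governs.

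The first point to settle is that strong contextuality is invariant under this transposition, i.e.\ $p$ is strongly contextual iff $\tilde p$ is. The adjunction restricts to a bijection $\catsSet(\Delta[0]\ast X, N\ZZ_2)\cong\catsSet(X,\Dec^0 N\ZZ_2)$ sending $\delta^\psi$ to $\delta^{\psi^\flat}$, as in the left vertical isomorphism of Proposition \ref{pro:Pro 2.23}. The generating simplices of $\Delta[0]\ast X$ correspond to those of $X$ --- the non-degenerate cone triangles $(c_0,\sigma)$ to the non-degenerate edges $\sigma\in X_1$ --- and under the adjunction $p_{(c_0,\sigma)}=\tilde p_\sigma$ while $\psi_{(c_0,\sigma)}=\psi^\flat_\sigma$. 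Since support can be tested on generating simplices, this yields $\psi\in\supp(p)$ iff $\psi^\flat\in\supp(\tilde p)$, hence $\supp(p)=\emptyset$ iff $\supp(\tilde p)=\emptyset$.

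With this reduction, the forward direction follows from Theorem \ref{thm:SC1skelz2}: $p$ strongly contextual gives a circle $C\subset X$ with $\tilde p|_C$ a PR box. To read this back on the cone side, I would invoke Lemma \ref{lem:transcomm}, which yields $p\circ\iota_X=D(d_0)\circ\tilde p$ for the inclusion $\iota_X:X\hookrightarrow\Delta[0]\ast X$, hence $p|_C=D(d_0)\circ(\tilde p|_C)$ since $C\subset X$. By (the proof of) Corollary \ref{ex:PRBox}, $D(d_0)$ applied to a PR box is $\delta^\varphi$ for a map $\varphi:C\to N\ZZ_2$ that is not null-homotopic, so $p|_C=\delta^\varphi$ with $\varphi$ not null-homotopic, as required.

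For the converse I would argue directly, avoiding Theorem \ref{thm:SC1skelz2}. Suppose $p|_C=\delta^\varphi$ with $\varphi$ not null-homotopic, and suppose some $\psi\in\supp(p)$ existed. Then $\psi|_C\in\supp(p|_C)=\supp(\delta^\varphi)=\{\varphi\}$, so $\psi|_C=\varphi$; but $\psi|_C$ extends over the cone $\Delta[0]\ast C$ (namely by $\psi|_{\Delta[0]\ast C}$), so $\psi|_C$ is null-homotopic by Proposition \ref{pro:NullHomCirc}, contradicting the choice of $\varphi$. Hence $\supp(p)=\emptyset$. I expect the main obstacle to be the second paragraph: the careful bookkeeping identifying the generating cone triangles of $\Delta[0]\ast X$ with the edges of $X$ and checking that the adjunction matches the relevant evaluations, together with confirming via Lemma \ref{lem:transcomm} that the restriction $p|_C$ --- a priori only a distribution valued in $D(N\ZZ_2)$ --- is exactly the deterministic map produced by the PR box. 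The genuine contextuality content lives entirely in Theorem \ref{thm:SC1skelz2}; the corollary is a translation that must nonetheless be executed with care.
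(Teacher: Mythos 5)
Your proposal is correct, and its forward direction coincides with the paper's: transpose $p$ across the adjunction (\ref{eq:Cone vs declage adjunction}) to $\tilde p:X\to D(\Delta_{\ZZ_2})$, apply Theorem \ref{thm:SC1skelz2} to produce a circle $C$ with $\tilde p|_C$ a PR box, and translate back via Lemma \ref{lem:transcomm} and the proof of Corollary \ref{ex:PRBox}. Two points where you diverge are worth noting. First, you actually justify the assertion that strong contextuality is invariant under transposition by matching generating simplices (the cone triangles $(c_0,\sigma)$ with the edges $\sigma$) and checking that the relevant evaluations agree; the paper states this equivalence without argument, so your bookkeeping fills a real, if small, gap --- just be sure to also account for cone edges over isolated vertices of $X$ if you do not assume every vertex lies on an edge. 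Second, your converse is genuinely different and more elementary: rather than running Theorem \ref{thm:SC1skelz2} backwards (which, to show that $p|_C=\delta^\varphi$ forces $\tilde p|_C$ to be a PR box, would route through $\Facc(\varphi)$ and Proposition \ref{pro:FacUnique}), you observe directly that any $\psi\in\supp(p)$ restricts to $\varphi$ on $C$ while also extending over $\Delta[0]\ast C$, so $\varphi$ would be null-homotopic by (the proof of) Proposition \ref{pro:NullHomCirc} --- a contradiction. This buys a shorter, self-contained converse that bypasses the PR-box classification entirely; the paper's route, by contrast, keeps the equivalence with PR boxes explicit at every stage. Both arguments are sound.
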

\begin{proof}
The simplicial distribution  $p:\Delta[0]\ast X\to D(N\ZZ_2)$ 
is strongly contextual if and only if its transpose $\tilde{p}: X \to {D(\Dec^0(N\zz_2)) \cong D(\Delta_{\ZZ_2})}$ 
 under the adjunction in (\ref{eq:Cone vs declage adjunction}) is strongly contextual. By Theorem \ref{thm:SC1skelz2} there exists a circle $C \subset X$ such that $\tilde{p}|_{C}$ is a PR box. As we have seen  in the proof of Corollary \ref{ex:PRBox} $D(d_0)\circ \tilde{p}|_C$ is a deterministic distribution $\delta^{\varphi}$ for some simplicial set map $\varphi: C \to N\ZZ_2$ that is not null-homotopic. By Lemma \ref{lem:transcomm} we have $D(d_0)\circ \tilde{p}|_C=p|_C$.
\end{proof}


\bibliography{bib.bib}
\bibliographystyle{ieeetr}

\end{document}